\newcommand{\tddeux}{\begin{picture}(5,5)(0,-1)
\put(3,0){\circle*{3}}
\thicklines
\put(3,0){\line(0,1){5}}
\put(3,5){\circle*{3}}
\end{picture}}
\newcommand{\tddeuxa}[2]{\begin{picture}(5,5)(0,-1)
\put(3,1){\circle*{3}}
\thicklines
\put(3,1){\line(0,1){5}}
\put(3,6){\circle*{3}}
\put(6,-3){\tiny #1}
\put(6,3){\tiny #2}
\end{picture}}
\newcommand{\tdddeuxa}[3]{\begin{picture}(5,5)(0,-1)
\put(3,0){\circle*{3}}
\thicklines
\put(3,0){\line(0,1){5}}
\put(3,5){\circle*{3}}
\put(3,5){\line(0,1){5}}
\put(3,10){\circle*{3}}
\put(6,-4){\tiny #1}
\put(6,1){\tiny #2}
\put(6,7){\tiny #3}
\end{picture}}
\newcommand{\tdtroisuna}[3]{\begin{picture}(12,12)(-5,-1)
\put(3,0){\circle*{3}}
\put(-0.65,0){$\vee$}
\put(6,7){\circle*{3}}
\put(0,7){\circle*{3}}
\put(5,-2){\tiny #1}
\put(8,5){\tiny #2}
\put(-6,5){\tiny #3}
\end{picture}}
\newtheorem{theorem}{Theorem}[section]
\newtheorem{proposition}[theorem]{Proposition}
\newtheorem{lemma}[theorem]{Lemma}
\newtheorem{prop-def}{Proposition-Definition}[section]
\newtheorem{coro-def}{Corollary-Definition}[section]
\theoremstyle{definition}
\newtheorem{definition}[theorem]{Definition}
\newtheorem{remark}[theorem]{Remark}
\newcommand{\nc}{\newcommand}
\nc{\tred}[1]{\textcolor{red}{#1}}
\nc{\tblue}[1]{\textcolor{blue}{#1}}
\nc{\tgreen}[1]{\textcolor{green}{#1}}
\nc{\tpurple}[1]{\textcolor{purple}{#1}}
\nc{\btred}[1]{\textcolor{red}{\bf #1}}
\nc{\btblue}[1]{\textcolor{blue}{\bf #1}}
\nc{\btgreen}[1]{\textcolor{green}{\bf #1}}
\nc{\btpurple}[1]{\textcolor{purple}{\bf #1}}
\nc{\NN}{{\mathbb N}}
\nc{\ncsha}{{\mbox{\cyr X}^{\mathrm NC}}} \nc{\ncshao}{{\mbox{\cyr
X}^{\mathrm NC}_0}}
\newcommand{\delete}[1]{}
\nc{\mlabel}[1]{\label{#1}}
\nc{\mcite}[1]{\cite{#1}}
\nc{\mref}[1]{\ref{#1}}
\nc{\meqref}[1]{\eqref{#1}}
\nc{\mbibitem}[1]{\bibitem{#1}}
\nc{\mlabel}[1]{\label{#1}{\hfill \hspace{1cm}{\bf{{\ }\hfill(#1)}}}}
\nc{\mcite}[1]{\cite{#1}{{\bf{{\ }(#1)}}}}
\nc{\mref}[1]{\ref{#1}{{\bf{{\ }(#1)}}}}
\nc{\meqref}[1]{\eqref{#1}{{\bf{{\ }(#1)}}}}
\nc{\mbibitem}[1]{\bibitem[\bf #1]{#1}}
\nc{\sha}{{\mbox{\cyr X}}}  
\newfont{\scyr}{wncyr10 scaled 550}
\nc{\ssha}{\mbox{\bf \scyr X}}
\nc{\shap}{{\mbox{\cyrs X}}} 
\nc{\shpr}{\diamond}    
\nc{\shp}{\ast} \nc{\shplus}{\shpr^+}
\nc{\shprc}{\shpr_c}    
\nc{\dep}{\mrm{dep}} \nc{\lc}{\lfloor} \nc{\rc}{\rfloor}
\nc{\db}{\leq_{\rm db}}
\nc{\cala}{{\mathcal A}} \nc{\calb}{{\mathcal B}}
\nc{\calc}{{\mathcal C}}
\nc{\cald}{{\mathcal D}} \nc{\cale}{{\mathcal E}}
\nc{\calf}{{\mathcal F}} \nc{\calg}{{\mathcal G}}
\nc{\calh}{{\mathcal H}} \nc{\cali}{{\mathcal I}}
\nc{\call}{{\mathcal L}} \nc{\calm}{{\mathcal M}}
\nc{\caln}{{\mathcal N}} \nc{\calo}{{\mathcal O}}
\nc{\calp}{{\mathcal P}} \nc{\calr}{{\mathcal R}}
\nc{\cals}{{\mathcal S}} \nc{\calt}{{\mathcal T}}
\nc{\calu}{{\mathcal U}} \nc{\calw}{{\mathcal W}} \nc{\calk}{{\mathcal K}}
\nc{\calx}{{\mathcal X}} \nc{\CA}{\mathcal{A}}
\nc{\fraka}{{\mathfrak a}} \nc{\frakA}{{\mathfrak A}}
\nc{\frakb}{{\mathfrak b}} \nc{\frakB}{{\mathfrak B}}
\nc{\frakc}{{\mathfrak c}}
\nc{\frakD}{{\mathfrak D}} \nc{\frakF}{\mathfrak{F}}
\nc{\frakf}{{\mathfrak f}} \nc{\frakg}{{\mathfrak g}}
\nc{\frakH}{{\mathfrak H}} \nc{\frakL}{{\mathfrak L}}
\nc{\frakM}{{\mathfrak M}} \nc{\bfrakM}{\overline{\frakM}}
\nc{\frakm}{{\mathfrak m}} \nc{\frakP}{{\mathfrak P}}
\nc{\frakN}{{\mathfrak N}} \nc{\frakp}{{\mathfrak p}}
\nc{\frakS}{{\mathfrak S}} \nc{\frakT}{\mathfrak{T}}
\nc{\frakX}{{\mathfrak X}}
\nc{\calT}{\mathcal{T}} \nc{\calF}{\mathcal{F}} \nc{\f}{\calF  }
\nc{\fn}{\calF^{N}} \nc{\fsn}{\calF^{\leq N}} \nc{\fsN}{\calF^{\leq N}}
\nc{\hck}{\mathcal{H}_{\rm CK}}
\nc{\hgl}{\mathcal{H}_{\rm GL}}
\nc{\RR}{\mathbb{R}} \nc{\ZZ}{\mathbb{Z}}  \nc{\etree}{\mathbbm{1}} \nc{\conc}{\cdot}
\nc{\hckn}{\hck^n} \nc{\hgln}{\hgl^n}
\nc{\hcksn}{\hck^{\leq n}} \nc{\hglsn}{\hgl^{\leq n}}
\nc{\hcksN}{\hck^{\leq N}} \nc{\hglsN}{\hgl^{\leq N}}
\nc{\X}{{\bf X}} \nc{\brp}{{\bf PBRP}_\alpha^{N}} \nc{\brpt}{{\bf PBRP}_\alpha^{3}}
\nc{\cbrpx}{\mathcal{D}^{N}_{\X;\alpha }}
\nc{\cbrpxd}{\mathcal{D}^{3}_{\X;\alpha }}
\nc{\cbrptx}{\mathcal{D}^{N}_{\tilde{X};\alpha }}
\nc{\cbrpxt}{\mathcal{D}^{3}_{\X;\alpha }}
\nc{\cbrptxt}{\mathcal{D}^{3}_{\tilde{X};\alpha }}
\nc{\Y}{{\bf Y}} \nc{\Z}{{\bf Z}} \nc{\W}{{\mathcal{Y}}}
\nc{\h}{{\bf H}}  \nc{\U}{{\bf U}} \nc{\bfone}{{\bf 1}}
\nc{\con}[2]{\ell_{#1}(#2)} \nc{\cont}[3]{\ell_{#1}^{#3}(#2)}
\nc{\fan}[1]{|||#1|||}
\nc{\hcksth}{\hck^{\leq 3}} \nc{\hglsth}{\hgl^{\leq 3}}
\nc{\hcksnd}{\hck^{\leq 2}} \nc{\hglsnd}{\hgl^{\leq 2}}
\nc{\ckr}{\Delta_{{\rm CK}}^{{\rm red}}}
\nc{\R}{\resizebox{0.8\width}{0.8\height}}
\nc{\hm}{\mathcal{H}_{\text {MKW}}}
\nc{\he}{\mathcal{H}_{\text {MKW}}^{*}}
\nc{\hmn}{\hm^N} \nc{\hen}{\mathcal{H}_{\text {MKW}}^{*N}}
\nc{\hmsn}{\hm^{\leq N}} \nc{\hesn}{\mathcal{H}_{\text {MKW}}^{*\leq N}} \nc{\hmsN}{\hm^{\leq N-1}} \nc{\hesN}{\mathcal{H}_{\text {MKW}}^{*\leq N}}
\nc{\hmsNd}{\hm^{\leq 2}} \nc{\hesnt}{\mathcal{H}_{\text {MKW}}^{*\leq 3}}
\nc{\pbrp}{{{\bf PBRP} }_\alpha^{N}}
\nc{\pbrps}{{{\bf PBRP}}_\alpha^{2}}
\nc{\pbrpt}{{{\bf PBRP}}_\alpha^{3}}
\nc{\hmsnd}{\hm^{\leq 2}} \nc{\hesnd}{\mathcal{H}_{\text {MKW}}^{*\leq 2}}
\nc{\hmsth}{\hm^{\leq 3}} \nc{\hesth}{\mathcal{H}_{\text {MKW}}^{*\leq 3}}
\nc{\EX}{\hat{\bf X}}
\nc{\hmr}{\Delta_{{\rm \text {MKW}}}^{{\rm red}}}
\nc{\hmnn}{\hm^n} \nc{\henn}{\mathcal{H}_{\text {MKW}}^{*n}}
\newcommand{\Int}{\ensuremath{{\textstyle\int}}}
\font\cyr=wncyr10 \font\cyrs=wncyr7
\nc{\xing}[1]{\textcolor{blue}{Xing:#1}}
\nc{\Dominique}[1]{\textcolor{blue}{Dominique: #1}}
\nc{\blue}[1]{\textcolor{blue}{#1}}
\nc{\revise}[1]{\textcolor{red}{#1}}
\nc{\red}[1]{\textcolor{red}{#1}}
\numberwithin{equation}{section}
\newcommand{\tdtroisunab}[1]{\begin{picture}(5,5)(0,-1)
\put(3,0){\circle*{3}}
\put(5,-2){\tiny #1}
\end{picture}}
\newcommand{\tdtroisunac}[3]{\begin{picture}(5,5)(0,-1)
\put(3,-4){\circle*{3}}
\thicklines
\put(3,-4){\line(0,1){6}}
\put(3,2){\circle*{3}}
\put(3,2){\line(0,1){6}}
\put(3,8){\circle*{3}}
\put(5,-6){\tiny #1}
\put(5,0){\tiny #2}
\put(5,6){\tiny #3}
\end{picture}}
\newcommand{\tdtroisunad}[3]{\begin{picture}(12,12)(-5,-1)
\put(3,-2){\circle*{3}}
\put(-0.65,-2){$\vee$}
\put(6,5){\circle*{3}}
\put(0,5){\circle*{3}}
\put(5,-4){\tiny #1}
\put(8,3){\tiny #2}
\put(-6,3){\tiny #3}
\end{picture}}
\newcommand{\tdtroisunae}[2]{\begin{picture}(5,5)(0,-1)
\put(3,-2){\circle*{3}}
\thicklines
\put(3,-2){\line(0,1){7}}
\put(3,5){\circle*{3}}
\put(5,-4){\tiny #1}
\put(5,3){\tiny #2}
\end{picture}}
\newcommand{\tdtroisunaf}[3]{\begin{picture}(5,5)(0,-1)
\put(8,-2){\circle*{3}}
\thicklines
\put(8,-2){\line(0,1){7}}
\put(8,5){\circle*{3}}
\put(-1,1.5){\circle*{3}}
\put(10,-4){\tiny #1}
\put(10,3){\tiny #2}
\put(1,-0.5){\tiny #3}
\end{picture}}
\newcommand{\tdtroisunaa}[4]{\begin{picture}(12,12)(-5,-1)
\thicklines
\put(3,-4){\circle*{3}}
\put(-0.85,-3){$\vee$}
\put(6,3){\circle*{3}}
\put(0,3){\circle*{3}}
\put(6,10){\circle*{3}}
\put(5,-6){\tiny #1}
\put(8,1){\tiny #2}
\put(6,3){\line(0,1){7}}
\put(-6,1){\tiny #3}
\put(8,8){\tiny #4}
\end{picture}}
\newcommand{\tdtroa}[5]{\begin{picture}(5,5)(0,-1)
\thicklines
\put(-1,-2){\circle*{3}}
\put(-1,-2){\line(0,1){7}}
\put(-1,5){\circle*{3}}
\put(1,-4){\tiny #1}
\put(1,3){\tiny #2}

\put(8,-2){\circle*{3}}
\put(8,-2){\line(0,1){7}}
\put(8,5){\circle*{3}}
\put(10,-4){\tiny #3}
\put(10,3){\tiny #4}

\put(17,1.5){\circle*{3}}
\put(19,-0.5){\tiny #5}
\end{picture}}
\newcommand{\tdtrob}[4]{\begin{picture}(5,5)(0,-1)
\thicklines
\put(-1,1.5){\circle*{3}}
\put(1,-0.5){\tiny #1}

\put(8,-2){\circle*{3}}
\put(8,-2){\line(0,1){7}}
\put(8,5){\circle*{3}}
\put(10,-4){\tiny #2}
\put(10,3){\tiny #3}

\put(17,1.5){\circle*{3}}
\put(19,-0.5){\tiny #4}
\end{picture}}
\newcommand{\tdtroc}[4]{\begin{picture}(5,5)(0,-1)
\thicklines
\put(-1,-2){\circle*{3}}
\put(-1,-2){\line(0,1){7}}
\put(-1,5){\circle*{3}}
\put(1,-4){\tiny #1}
\put(1,3){\tiny #2}

\put(8,1.5){\circle*{3}}
\put(10,-0.5){\tiny #3}

\put(17,1.5){\circle*{3}}
\put(19,-0.5){\tiny #4}
\end{picture}}
\newcommand{\tdtrod}[3]{\begin{picture}(5,5)(0,-1)
\thicklines
\put(-1,-2){\circle*{3}}
\put(-1,-2){\line(0,1){7}}
\put(-1,5){\circle*{3}}
\put(1,-4){\tiny #1}
\put(1,3){\tiny #2}

\put(8,1.5){\circle*{3}}
\put(10,-0.5){\tiny #3}
\end{picture}}
\newcommand{\tdtroe}[3]{\begin{picture}(5,5)(0,-1)
\put(-1,0){\circle*{3}}
\put(1,-2){\tiny #1}

\put(8,0){\circle*{3}}
\put(10,-2){\tiny #2}

\put(17,0){\circle*{3}}
\put(19,-2){\tiny #3}
\end{picture}}
\newcommand{\tdtrof}[2]{\begin{picture}(5,5)(0,-1)
\put(-1,0){\circle*{3}}
\put(1,-2){\tiny #1}

\put(8,0){\circle*{3}}
\put(10,-2){\tiny #2}
\end{picture}}
\newcommand{\tdtrog}[4]{\begin{picture}(5,5)(0,-1)
\thicklines
\put(-1,-2){\circle*{3}}
\put(-1,-2){\line(0,1){7}}
\put(-1,5){\circle*{3}}
\put(1,-4){\tiny #1}
\put(1,3){\tiny #2}

\put(8,-2){\circle*{3}}
\put(8,-2){\line(0,1){7}}
\put(8,5){\circle*{3}}
\put(10,-4){\tiny #3}
\put(10,3){\tiny #4}
\end{picture}}
\begin{document}
	
\title[Rough differential equations and planarly branched universal limit theorem]{Rough differential equations and planarly branched universal limit theorem}
%
%

\author{Xing Gao}
\address{School of Mathematics and Statistics, Lanzhou University
Lanzhou, 730000, China;
Gansu Provincial Research Center for Basic Disciplines of Mathematics
and Statistics, Lanzhou, 730070, China
}
\email{gaoxing@lzu.edu.cn}

\author{Nannan Li}
\address{School of Mathematics and Statistics, Lanzhou University
Lanzhou, 730000, China
}
\email{linn2024@lzu.edu.cn}

\author{Dominique Manchon}
\address{Laboratoire de Math\'{e}matiques Blaise Pascal, CNRS-Universit\'{e} Clermont-Auvergne, 3 place Vasar\'ely, CS 60026, 63178 Aubi\`ere, France}
\email{Dominique.Manchon@uca.fr}

\date{\today}
\begin{abstract}
We prove the universal limit theorem for planarly branched rough paths with roughness $\frac{1}{4} < \alpha \leq \frac{1}{3}$, using a fixed-point approach based on the Banach contraction principle. Planarly branched rough paths generalize both classical rough paths and branched rough paths, in a manner analogous to how post-Lie algebras generalize both Lie and pre-Lie algebras. In particular, the primitive elements in the graded dual of the Hopf algebra associated with planarly branched rough paths form a post-Lie algebra, subsuming the Lie and pre-Lie structures arising in the geometric and branched settings. This result extends the scope of the universal limit theorem, previously established for: (i) rough paths with roughness $\frac{1}{3} < \alpha \leq \frac{1}{2}$; (ii) geometric rough paths with $0 < \alpha \leq 1$; and (iii) branched rough paths with $0 < \alpha \leq 1$.
\end{abstract}

\makeatletter
\@namedef{subjclassname@2020}{\textup{2020} Mathematics Subject Classification}
\makeatother
\subjclass[2020]{
60L20, 
60L50, 
60H99, 
34K50, 
37H10,  
05C05.   
}

\keywords{planarly branched rough path; controlled planarly branched rough path; universal limit theorem; Banach fixed point theorem}

\maketitle

\tableofcontents

\setcounter{section}{0}
\allowdisplaybreaks

\section{Introduction}
This paper generalizes the universal limit theorem to planarly branched rough paths with roughness $\frac{1}{4} < \alpha \leq \frac{1}{3}$, thereby extending its validity beyond the classical rough and branched rough path frameworks.

\subsection{Rough paths and (planarly) branched rough paths}
Rough paths were introduced by T.~Lyons in 1998~\cite{L98} as a framework for solving rough ordinary differential equations (RODEs) in integral form:
\begin{equation}\label{ivp}
Y_t = Y_0 + \int_0^t \sum_{i=1}^d f_i(Y_s)\, dX^i_s,
\end{equation}
where $ X: [0,T] \to \mathbb{R}^d $ is a driving signal and $ (f_i)_{i=1}^d $ is a collection of smooth vector fields on $ \mathbb{R}^n $. The theory provides a way to interpret such integrals when the path $ X $ is only $\alpha$-H\"older continuous for some $\alpha \in (0,1]$, or more generally of finite $1/\alpha$-variation, where classical integration theory fails.
To address this, rough paths are constructed as substitutes for Chen's iterated integrals of $ X $. More precisely, a rough path $ \mathbf{X} $ over $ X $ is a two-parameter family $ (\mathbf{X}_{st})_{0 \leq s \leq t \leq T} $ of linear functionals on the shuffle Hopf algebra generated by words over the alphabet $ [d] := \{1, \ldots, d\} $. These functionals encode the iterated integral structure of $ X $ and satisfy Chen's identity and suitable estimates.
A rough path $\mathbf{X}$ is called weakly geometric~\cite{FH2, Geng, Kel} if each $\mathbf{X}_{st}$ is a character on the shuffle Hopf algebra. It is called geometric~\cite{FH2, Geng, Kel} if there exists a sequence of smooth (bounded variation) paths $X^{(m)} $ whose canonical rough path lifts $ \mathbf{X}^{(m)}$ converge to $\mathbf{X} $ in the rough path topology (e.g., the H\"older or $p$-variation metric). By construction, geometric rough paths are weakly geometric~\cite[Lemma~1.14]{Geng}, but the converse may fail in infinite-dimensional settings~\cite{FV2, Geng}. Thus, the distinction between weakly geometric and geometric rough paths becomes essential when extending the theory beyond finite-dimensional Euclidean spaces.

Whereas the signature of a differentiable path (i.e. the collection of its iterated Chen integrals) is uniquely defined, rough paths above a given $\alpha$-H\" older-continuous path always exist but are not unique when $\alpha <1/2$~\cite{LV07, TZ20}. As a consequence of the sewing lemma~\cite{FP06, Gub04}, it is however sufficient to evaluate the rough path on words of length smaller or equal to $N=\lfloor1/\alpha\rfloor$, the evaluations on longer words coming for free.

Branched rough paths, first introduced by M.~Gubinelli~\cite{Gub1}, are defined analogously to T.~Lyons' rough paths, except that the shuffle Hopf algebra of words with letters in $[d] := \{1, \ldots, d\}$ is replaced by the Connes-Kreimer Hopf algebra of rooted forests decorated by $[d]$. Branched rough paths have been studied extensively; see~\cite{CFMK, D22} and references therein. Notably, they play a significant role in the theory of regularity structures~\cite{BHZ, Hair14, LOTT}, developed by M.~Hairer to address stochastic partial differential equations (SPDEs).
By replacing the Connes-Kreimer Hopf algebra with the Munthe-Kaas-Wright Hopf algebra of planar rooted forests~\cite{MKW}, one arrives at the notion of planarly branched rough paths~\cite{CFMK, KL23}, which have found applications in solving RODEs on homogeneous spaces~\cite{CFMK}.
Planarly branched rough paths (resp.\ rough paths, resp.\ branched rough paths) are based on the Munthe-Kaas-Wright (resp.\ shuffle, resp.\ Connes-Kreimer) Hopf algebra, whose graded dual Hopf algebra has primitive elements forming a post-Lie (resp.\ Lie, resp.\ pre-Lie) algebra. In this sense, planarly branched rough paths generalize both rough paths and branched rough paths, just as post-Lie algebras generalize both Lie and pre-Lie algebras.

\subsection{Controlled rough paths and universal limit theorem}
Controlled rough paths were introduced by M.~Gubinelli, both in the shuffle setting~\cite{Gub04} and the branched setting~\cite{Gub1} (see also~\cite{Kel}). Given a rough path $\mathbf{X}$ above a path $X$, an $\mathbf{X}$-controlled rough path on $\mathbb{R}^n$ is a collection $\mathbf{Y} = (\mathbf{Y}^1, \ldots, \mathbf{Y}^n)$ of $n$ functions from $[0,T]$ into a suitably truncated Hopf algebra, satisfying the expansion
\[
\langle \tau, \mathbf{Y}_t^i \rangle = \langle \mathbf{X}_{st} \star \tau, \mathbf{Y}_s^i \rangle + \text{small remainder},
\]
for all basis elements $\tau$ and $i = 1, \ldots, n$. The space of $\mathbf{X}$-controlled rough paths forms a Banach space. Furthermore, any smooth function $f: \mathbb{R}^n \to \mathbb{R}^n$ gives rise to a unique controlled rough path $f(\mathbf{Y})$ above the path $f \circ Y$. This allows one to lift the differential equation~\eqref{ivp} to the level of controlled rough paths:
\vspace{-0.1in}
\begin{equation}\label{ivplifted}
\mathbf{Y}_t = \mathbf{Y}_0 + \int_0^t \sum_{i=1}^d f_i(\mathbf{Y}_s)\, d\mathbf{X}^i_s.
\end{equation}
In this framework, integrals of the form $\int_0^t \mathbf{Z}_s\, dX_s^i$ can be rigorously defined for any $\mathbf{X}$-controlled rough path $\mathbf{Z}$. Applying the standard Banach fixed point theorem to the mapping
\begin{equation}\label{eq:firstm}
\mathcal{M}: \mathbf{Y} \longmapsto \mathbf{Y}_0 + \int_0^\bullet \sum_{i=1}^d f_i(\mathbf{Y}_s)\, dX_s^i
\end{equation}
establishes existence and uniqueness of the solution to~\eqref{ivplifted}. This result is known as the \emph{universal limit theorem}.

The universal limit theorem, originally formulated by T.~Lyons~\cite{L98} and named by P.~Malliavin~\cite{M97}, is a cornerstone of rough path theory. It was first proved for weakly geometric rough paths with roughness $\frac{1}{3} < \alpha \leq \frac{1}{2}$; see also~\cite{FV2, Geng}. In the same year, Lyons and Qian~\cite{LQ98} extended the result to general rough paths within the same roughness regime. Later, Lyons and Qian~\cite{LQ02} extended the result to all geometric rough paths with roughness $0 < \alpha \leq 1$. More recently, Friz and Zhang~\cite{FH18} established the universal limit theorem for all branched rough paths with roughness $0 < \alpha \leq 1$ in the $1/\alpha$-variation topology.
In this paper, we have chosen to stick to the $\alpha$-H\"{o}lder case, different from bounded $1/\alpha$-variation case but classical by now ~\cite{CFMK, FH2, Gub1, Geng, L98}.

It is noteworthy that the underlying idea of the universal limit theorem had already been applied to stochastic differential equations (SDEs) prior to the development of rough path theory~\cite{L98}, since both It$\hat{\mathrm{o}}$-type and Stratonovich-type SDEs can be viewed as special instances of the rough differential equation~\eqref{ivp}. In particular, E.~Wong and M.~Zakai~\cite{WZ1, WZ2} were the first to establish a version of the universal limit theorem for Stratonovich-type SDEs. For further developments in this direction, we refer the reader to~\cite{D07, M97, Pri}.
The universal limit theorem also has important applications beyond stochastic analysis. For instance, it has been used in the study of weak Poincar\'e inequalities~\cite{A04}, and in the semiclassical approximation of Schr\"odinger operators on path spaces via Schr\"odinger operators with quadratic potentials on Wiener spaces~\cite{A07}.

\subsection{Motivation and outline of the paper}
Our motivation is twofold. On the one hand, planarly branched rough paths serve as natural generalizations of both rough paths and branched rough paths. This broader framework allows one to seek solutions of RODEs on manifolds whose tangent vector fields form a post-Lie algebra---extending the classical setting where rough paths correspond to Lie algebras and branched rough paths to pre-Lie algebras. The following diagram illustrates the relationships among various types of rough paths; see~\mcite{Man25} for further details:
\begin{displaymath}
\xymatrix@C=2.2cm@R=1cm{
\mathcal{H}_{{\rm LOT}}^A \ar@{^(->}[r] \ar[d]_{{\bf X}_{st}^{{\rm LOT}}}& \mathcal{H}_{{\rm CK}}^A  \ar@<0.1cm>@{^(->}[d]\ar@{->>}[r] \ar[ld]_{{\bf X}_{st}^{{\rm CK}}}& \mathcal{H}_\shuffle^A\ar[dll]_{{\bf X}_{st}^{\shuffle}}\\
\mathbb{R}&\mathcal{H}_{{\rm MKW}}^A\ar@{->>}[ur]\ar[l]^{{\bf X}_{st}^{{\rm MKW}}}&
}
\end{displaymath}
Here, $\mathcal{H}_{\mathrm{LOT}}^{A}$, $\mathcal{H}_{\mathrm{CK}}^{A}$, $\mathcal{H}_{\shuffle}^{A}$, and $\mathcal{H}_{\mathrm{MKW}}^{A}$ denote, respectively, the $A$-decorated Linares-Otto-Tempelmayr Hopf algebra~\cite{BH,LOT,ZGM}, the Connes-Kreimer Hopf algebra, the shuffle Hopf algebra, and the Munthe-Kaas-Wright Hopf algebra. The associated rough paths are denoted by $\mathbf{X}^{\mathrm{LOT}}$, $\mathbf{X}^{\mathrm{CK}}$, $\mathbf{X}^{\shuffle}$, and $\mathbf{X}^{\mathrm{MKW}}$.
On the other hand, with regard to the universal limit theorem, we extend the known roughness regime from $\frac{1}{3} < \alpha \leq \frac{1}{2}$ (as in the classical theory of rough paths) to $\frac{1}{4} < \alpha \leq \frac{1}{3}$ in the setting of planarly branched rough paths.

In this paper, we focus on the case $\frac{1}{4} < \alpha \leq \frac{1}{3}$, corresponding to a truncation level $N = 3$, and establish the universal limit theorem in the framework of planarly branched rough paths using the Banach fixed point theorem. The \emph{main difficulty} lies in verifying two key properties: (1) the composition of a controlled planarly branched rough path with a regular function remains a controlled planarly branched rough path, and (2) the fixed point map is contractive. Both aspects require substantial and intricate computations.
To the best of our knowledge, this is the first work combining the universal limit theorem with the theory of planarly branched rough paths. A generalization to arbitrary truncation levels $N$, encompassing a broader class of connected graded Hopf algebras, will be addressed in a forthcoming paper.

\vspace{0.5em}
\noindent{\bf Outline.} The structure of the paper is as follows. In Subsection~\ref{sec:2.1}, we recall the definition of a planarly branched rough path and introduce the notion of a controlled planarly branched rough path, which is central to our analysis.
Subsection~\ref{sec2.2} is devoted to establishing an upper bound on the norm of a new controlled planarly branched rough path obtained by composing a controlled planarly branched rough path with a sufficiently regular function (Theorem~\mref{thm:stab1}).
In Subsection~\ref{sec2.3}, we prove the stability of such compositions within the space of controlled planarly branched rough paths (Theorem~\mref{thm:stab2}).

Subsection~\ref{sec3.1} introduces the rough integral of a controlled planarly branched rough path against a planarly branched rough path.
In Subsection~\ref{sec3.2}, we prove the universal limit theorem using the Banach fixed point theorem. We begin by deriving an upper bound (Lemma~\mref{lem:jstab1}) and establishing continuity (Lemma~\mref{lem:jstab2}) of the fixed point map $\mathcal{M}$ defined in~\eqref{eq:firstm}, applied to a controlled planarly branched rough path $\mathbf{Y}$.
We then prove the local existence and uniqueness of solutions to the RODE~\eqref{ivp} (Theorem~\mref{thm:leau}). Finally, with all preparatory results in place, we prove the universal limit theorem for planarly branched rough paths (Theorem~\mref{thm:unilthm}).

\vspace{0.5em}
\noindent \textbf{Notation.} Throughout this paper, we fix two positive integers $d$ and $n$, where $d$ denotes the number of vector fields driving equation~\eqref{ivp}, and $n$ is the dimension of the target space in which the solution takes values. We write $\RR$ for the field of real numbers. The notation $M(\bullet)$ denotes a universal function, increasing in all of its arguments, which may vary from line to line.

\section{An upper bound and stability of the composition with regular functions}\label{sec:rpath}
In this section, we begin by recalling the notion of planarly branched rough paths and introduce the concept of controlled planarly branched rough paths, which serve as the central objects of study in this paper. We then establish an upper bound for the composition of controlled planarly branched rough paths with regular functions and prove the stability of such compositions within the corresponding functional framework.

\subsection{Planarly branched rough paths and controlled planarly branched rough paths}\label{sec:2.1}
Let $A$ be a finite set with $d$ elements. Denote by $\calT$ (resp. $\f$) the set of $A$-decorated planar rooted trees (resp. forests).
For $\tau\in \f$, define $|\tau|$ to be the number of vertices of $\tau$.
There is a unique forest $\etree$ with $|\etree|=0$, called the empty forest.
Denote respectively by
\begin{align*}
\hm:=&\ (\mathbb{R} \calF, \shuffle, \etree, \Delta_{\text {MKW}},\etree^*)=\bigoplus_{n\geq 0} \hmnn,\\
\he:=&\ (\mathbb{R} \f,\star, \etree, \Delta_{\shuffle},\etree^*)= \bigoplus_{n\geq 0} \henn,
\end{align*}
the Munthe-Kaas-Wright graded Hopf algebra~\cite{MKW} and its graded dual Hopf algebra under the pairing
$
\langle\tau,\tau'\rangle=\delta_\tau^{\tau'},
$
where $\shuffle$ is the shuffle product and $\delta $ is the Kronecker delta. Here for $ n, N\geq 0$, set $${\mathcal{F}}^{n}:=\{\tau\in \f \mid |\tau| = n \},\quad  {\mathcal{F}}^{\le N}:=\{\tau\in \f \mid |\tau| \leq N \}, \quad \hmnn:= \mathbb{R} {\mathcal{F}}^{n}=:\henn.$$
The coproduct $\Delta_{\text {MKW}}$ and the multiplication $\star$ are characterized respectively by the left admissible cut and the left grafting~\cite{ER25,MKW}.
We recall here the coproduct $\Delta_{\text {MKW}}$ in detail for later use. Let $\tau$ be an $A$-decorated planarly rooted tree in $\calT$. A subset $c$ of edges in $\tau$ is called {\bf a left-admissible cut} if
\begin{enumerate}
\item For every edge $e$ in $c$, every other edge outgoing the same vertex as $e$ and to the left of $e$ in the planar embedding, is also in $c$.

\item Every path from the root of $\tau$ to any of its leaves contains at most one edge in $c$.
\end{enumerate}
Let $c$ be a left-admissible cut of $\tau$ and remove the edges in $c$ from $\tau$.
Denote by $T^c(\tau)$ the trunk part, which is the connected component containing the root.
Let $P^c(\tau)$ be the pruned part, which is obtained by keeping the planar order of those subtrees that are cut off from the same vertex and then shuffling together the resulting planarly rooted forests that are cut off from
different vertices.
The coproduct $\Delta_{\text {MKW}}$ is described as
\begin{align*}
\Delta_{\text {MKW}}(\tau):=&\ \sum_{c \text{\ left-admissible cut}}P^c(\tau)\otimes T^c(\tau),\quad \forall \tau \in \calT,\\
\Delta_{\text {MKW}}(\omega ):=&\ (\text{id}\otimes B_{-})(\Delta_{\text {MKW}}(B_{+}(\omega))-B_{+}(\omega)\otimes \etree),\quad  \forall \omega \in \calF,
\end{align*}
where $B_{+}$ is the usual grafting operator, grafting all planarly rooted trees in the input planarly rooted forest onto a new root (no need decoration) and $B_{-}$ is the inverse of $B_{+}$. We expose two examples~\cite{ER25} for better understanding:
\begin{align*}
\Delta_{\text {MKW}}(\ \tdtroisunaa{$a$}{$c$}{$b$}{$d$}\ \ \ )=&\ \etree\otimes\, \tdtroisunaa{$a$}{$c$}{$b$}{$d$}\ \ +\tdtroisunab{$b$}\ \,\otimes \tdtroisunac{$a$}{$c$}{$d$}\ \ +\tdtroisunab{$d$}\ \,\otimes\, \tdtroisunad{$a$}{$c$}{$b$}\ \,+\tdtroisunab{$b$}\ \,\shuffle \tdtroisunab{$d$}\ \,\otimes \tdtroisunae{$a$}{$c$}\ \,+\ \tdtroisunaf{$c$}{$d$}{$b$}\ \ \ \,\otimes \tdtroisunab{$a$}\ +\,\tdtroisunaa{$a$}{$c$}{$b$}{$d$}\ \ \otimes\etree,\\
\Delta_{\text {MKW}}(\ \ \tdtroa{$a$}{$b$}{$c$}{$d$}{$f$}\ \ \ \ \ \ \ )=&\ \etree\otimes\,\, \tdtroa{$a$}{$b$}{$c$}{$d$}{$f$}\ \ \ \ \ \ \ +\tdtroisunab{$b$}\ \,\otimes\  \tdtrob{$a$}{$c$}{$d$}{$f$}\ \ \ \ \ \ \, +\tdtroisunab{$d$}\ \,\otimes\ \tdtroc{$a$}{$b$}{$c$}{$f$}\ \ \ \ \ \ \, +\tdtroisunae{$a$}{$b$}\ \,\otimes\ \tdtrod{$c$}{$d$}{$f$}\ \ \, \ +\tdtroisunab{$b$}\ \,\shuffle \tdtroisunab{$d$}\ \,\,\otimes \ \tdtroe{$a$}{$c$}{$f$}\\
&\ +\tdtroisunae{$a$}{$b$}\ \,\shuffle \tdtroisunab{$d$}\ \,\,\otimes\, \, \tdtrof{$c$}{$f$}\ \ \ \ +\ \tdtrog{$a$}{$b$}{$c$}{$d$}\ \ \ \ \otimes\tdtroisunab{$f$}\ \,+\ \tdtroa{$a$}{$b$}{$c$}{$d$}{$f$}\ \ \ \ \ \ \ \otimes\etree.
\end{align*}

Both $$\hmsn := \bigoplus_{0\leq k\leq N} \hm^k, \quad \hesn := \bigoplus_{0\leq k\leq N} \mathcal{H}_{\text {MKW}}^{*k}$$ are endowed with a structure of connected graded (and finite-dimensional) algebra and  coalgebra as follows: the vector subspace $I_N:= \RR\{\tau\in \f \mid |\tau| > N \}$ of $\hm$ is a graded ideal (but not a bi-ideal), hence $\hm/I_N$ is a graded algebra.
On the other hand, the restriction of the projection $\pi_N:\hm\to\hskip -9pt\to\hm/I_N$ to the subcoalgebra $\hmsn$ is an isomorphism of graded vector spaces.
The graded algebra structure of $\hm/I_N$ can therefore be transported on $\hmsn$, making  it both an algebra and a coalgebra, denoted by
$$\hmsn= ( \mathbb{R} \fsN, \shuffle, \etree, \Delta_{\text {MKW}},\etree^*)$$
by a slight abuse of notations. Since $\dim \hmsn$ is finite, we also have the dual algebra/coalgebra
$$\hesn = (\hmsn)^*=(\mathbb{R} \fsN,\star, \etree^*, \Delta_{\shuffle},\etree)$$ under the above pairing. The following is
 the concept of planarly branched rough path~\cite[Definition 4.1]{CFMK}.

\begin{definition}
Let $\alpha \in(0,1]$ and $N= \lfloor \frac{1}{\alpha} \rfloor$. An {\bf $\alpha$-H\"{o}lder planarly branched rough path} is a map $\X:[0,T]^2\rightarrow \hesn$ such that
\begin{enumerate}
\item  $\X_{s,u} \star \X_{u,t} =\X_{s,t}$,\mlabel{it:bitem1}

\item $\langle \X_{s,t}, \sigma \shuffle \tau\rangle  = \langle \X_{s,t}, \sigma\rangle\langle \X_{s,t}, \tau\rangle $,\mlabel{it:bitem2}

\item $\sup\limits_{s\ne t\in [ 0,T ] } \frac{| \left \langle \X_{s,t}, \tau \right \rangle  |}{| t-s | ^{| \tau |\alpha  } } <\infty$,\quad $\forall \sigma,\tau\in \fsN$ and $s, t, u\in \left[ 0, T\right] $. \mlabel{it:bitem3}
\end{enumerate}
Further for a path $X=(X^1,\ \ldots,X^d):[0,T]\rightarrow \mathbb{R}^d$, if
$$\langle \X_{s,t} , \bullet_i \rangle=X_{t}^i - X_{s}^i\, \text{ for each }\, i=1,\ \ldots, d, $$
then we call $\X$ a {\bf planarly branched rough path } above $X$.
\mlabel{def:pbrp}
\end{definition}

Estimate \eqref{it:bitem3} in Definition~\mref{def:pbrp}~ is equivalently written as $| \left \langle \X_{s,t}, \tau \right \rangle  | = O(| t-s | ^{| \tau |\alpha  } )$.
We will use both notations interchangeably in the sequel of the paper.\\

Let $\brp $ be the set of $\alpha$-H\"{o}lder planarly branched rough paths. For any map $\X:[0,T]^2\rightarrow \hesn$ and $\tau\in \f$, define
\begin{equation}
\|\X^{\tau}\|_{|\tau|\alpha} : =\sup_{s\ne t\in [ 0,T ] } \frac{| \X^\tau_{s,t}  |}{| t-s | ^{|\tau |\alpha  } }.
\mlabel{eq:normx}
\end{equation}
Now endow the set $\brp$ with the distance
\begin{equation}\label{distance-holder}
d_{\alpha } ( \X,\tilde{\X}  ) : = \sum_{\tau\in \fsN\setminus \{\etree\}}\| \X^{\tau}-\tilde{\X}^{\tau}    \|_{| \tau |\alpha  }, \quad \forall \X, \tilde{\X}\in \brp
\end{equation}
and define
\begin{equation}
\fan{\X}_{\alpha }  : = d_{\alpha } ( \X, \bfone  )=\sum_{\tau\in \fsN\setminus \{\etree\}}\| \X^{\tau}-\bfone ^{\tau}    \|_{| \tau |\alpha  } = \sum_{\tau\in \fsN\setminus \{\etree\} } \| \X^{\tau}    \|_{| \tau |\alpha  },  \quad \forall \X\in \brp
\mlabel{eq:distx}
\end{equation}
where $\bfone:[0,T]^2\rightarrow \hesn, \, (s,t)\mapsto  \etree.$\\

We are going to propose the planar version of
controlled branched rough paths~\mcite{Gub1}.
Let $\hmr$ be the reduced coproduct of $\hm$. That is,
$\hmr(\sigma) := \Delta_{\text {MKW}}(\sigma)-1\otimes \sigma-\sigma\otimes 1$ for $\sigma\in \calF$.
Write
$$\hmr(\sigma)=\sum_{\sigma',\sigma''\in \calF}c(\sigma; \sigma', \sigma'')\sigma' \otimes\sigma''.$$
Here $\sigma'$ is the remaining part containing the root and $\sigma''$ is the pruned part.
Let $N= \lfloor \frac{1}{\alpha } \rfloor$, $\X\in  \brp $ and $\Y: [0,T]\rightarrow \hmsN$ be a path. Denote
\begin{equation}
\begin{aligned}
R\Y^{\etree}_{s,t}:=&\ \Y_{s,t}^{\etree}-\sum_{\tau\in \mathcal{F}^{\leq (N-1)}\setminus\{\etree\}}\Y^{\tau}_{s}\X^{\tau}_{s,t}, \\
R\Y^{\tau}_{s,t}:=&\ \Y_{s,t}^{\tau}-\sum_{\sigma,\rho\in\mathcal{F}^{\leq (N-1)}} c(\sigma; \tau, \rho)\Y^{\sigma}_{s}\X^{\rho}_{s,t}, \quad \forall \tau\in \mathcal{F}^{\leq (N-1)}\setminus \{\etree\},
\end{aligned}
\mlabel{eq:remain}
\end{equation}
where
$$\X^\tau_{s,t}:= \langle \X_{s,t}, \tau\rangle,\quad \Y^\tau_{t}:= \langle \tau, \Y_{t}\rangle,\quad \Y_{s,t}^{\tau}:=\Y_{t}^{\tau}-\Y_{s}^{\tau}.$$

\begin{definition}
Let $\X \in \brp$. A path $\Y: [0,T]\rightarrow \hmsN $ is called an {\bf $\X$-controlled planarly branched rough path} if
\begin{equation}
\|R\Y^\etree\|_{N\alpha}:=\sup_{s\ne t\in [ 0,T ] } \frac{|R\Y^{\etree}_{s,t} |}{| t-s |^{N\alpha}} < \infty,\quad \|R\Y^\tau\|_{(N-|\tau|)\alpha}:=\sup_{s\ne t\in [ 0,T ] } \frac{|R\Y^{\tau}_{s,t} |}{| t-s |^{(N-|\tau|)\alpha}} < \infty,\quad \forall \tau\in \mathcal{F}^{\leq (N-1)}\setminus \{\etree\}.
\mlabel{eq:remainry}
\end{equation}
Further for a path $Y:[0,T]\rightarrow \mathbb{R}$, if
$\langle \etree, \Y_{t} \rangle=Y_{t},$
then we call $\Y$ above $Y$ or $Y$ can be lifted to $\Y$.
\mlabel{defn:cbrp}
\end{definition}

\begin{remark}
 ~(\ref{eq:remain}) is equivalent to equation
\begin{equation*}
R\Y^{\tau}_{s,t}=\langle \tau, \Y_t\rangle-\langle \tau\star\X_{s,t}, \Y_s\rangle, \quad \forall \tau\in \mathcal{F}^{\leq (N-1)}.
\end{equation*}
Indeed, if $\tau=\etree$, then
\begin{align*}
&\ \langle \etree, \Y_t\rangle-\langle \etree\star\X_{s,t}, \Y_s\rangle
= \langle \etree, \Y_t\rangle-\langle \X_{s,t}, \Y_s\rangle
= \Y_{t}^{\etree}-\sum_{\tau\in \mathcal{F}^{\leq (N-1)}}\Y^{\tau}_{s}\X^{\tau}_{s,t}\\
=&\ \Y_{t}^{\etree}-\Y_{s}^{\etree}-\sum_{\tau\in \mathcal{F}^{\leq (N-1)}\setminus \{\etree\}}\Y^{\tau}_{s}\X^{\tau}_{s,t}
= \Y_{s,t}^{\etree}-\sum_{\tau\in \mathcal{F}^{\leq (N-1)}\setminus \{\etree\}}\Y^{\tau}_{s}\X^{\tau}_{s,t}.
\end{align*}
If $\tau\ne\etree$, then
\begin{align*}
\langle \tau, \Y_t\rangle-\langle \tau\star\X_{s,t}, \Y_s\rangle
=&\ \langle \tau, \Y_t\rangle-\langle \tau\otimes\X_{s,t}, \Delta_{\rm \text {MKW}} (\Y_s)\rangle\\
=&\ \langle \tau, \Y_t\rangle-\langle \tau, \Y_s\rangle-\langle \tau\otimes\X_{s,t}, \hmr(\Y_s)\rangle\\
=&\ \langle \tau, \Y_{s,t}\rangle-\left\langle \tau\otimes\X_{s,t}, \hmr\Big(\sum_{\sigma\in \mathcal{F}^{\leq (N-1)}}  \Y^{\sigma}_{s} \sigma \Big) \right\rangle\\
%
%
=&\ \langle \tau, \Y_{s,t}\rangle-\sum_{\sigma\in \mathcal{F}^{\leq (N-1)}}\sum_{\sigma',\sigma''\in \mathcal{F}^{\leq (N-1)}}c(\sigma; \sigma', \sigma'')  \Y^{\sigma}_{s}\langle \tau\otimes\X_{s,t}, \sigma' \otimes\sigma''\rangle\\
=&\ \langle \tau, \Y_{s,t}\rangle-\sum_{\sigma\in \mathcal{F}^{\leq (N-1)}}\sum_{\rho\in \mathcal{F}^{\leq (N-1)}}c(\sigma; \tau, \rho) \Y^{\sigma}_{s}\langle \tau, \tau\rangle \langle \X_{s,t},\rho \rangle\\
=&\ \Y_{s,t}^{\tau}-\sum_{\sigma,\rho\in\mathcal{F}^{\leq (N-1)}} c(\sigma; \tau, \rho) \Y^{\sigma}_{s}\X^{\rho}_{s,t}.
\end{align*}
\end{remark}

\begin{remark}
One can adapt Definition~\mref{defn:cbrp} component-wisely to
$$\Y=(\Y^1, \ldots, \Y^n): [0,T]\rightarrow (\hmsN)^n,\quad t\mapsto \Y_t=(\Y^1_t, \ldots, \Y^n_t), $$
where $(\hmsN)^n$ denotes the $n$-th cartesian power of $\hmsN$. Hence
\begin{equation}
\Y_t^\tau :=\langle \tau, \Y_{t} \rangle\in \RR^n, \quad \forall \tau\in \mathcal{F}^{\leq (N-1)}.
\mlabel{eq:pairy}
\end{equation}
\mlabel{rk:gcont}
\end{remark}

For $\X\in \brp$, denote by $\cbrpx$ the space of
$\X$-controlled planarly branched rough paths
given in Remark~\mref{rk:gcont}.
For $\Y\in \cbrpx$ and $\tilde \Y\in \cbrptx$, define the norm
\begin{equation}
\fan{\Y} _{\X;\alpha } :=\sum_{\tau\in \f^{\leq (N-1)}}\| R\Y^{\tau} \|_{( N-| \tau |) \alpha  }
\mlabel{eq:normy}
\end{equation}
and the metric
\begin{equation}
\fan{\Y, \tilde \Y} _{\X,\tilde \X;\alpha } :=\sum_{\tau\in \f^{\leq (N-1)}}\| R\Y^{\tau}-  R\tilde \Y^{\tau}  \|_{( N-| \tau |) \alpha  }=\fan{\Y-\tilde \Y} _{\X;\alpha }.
\mlabel{eq:dist}
\end{equation}
Similar to the case of controlled branched rough paths~\cite{Kel}, the $(\cbrpx, |||\cdot|||_{\X;\alpha})$ is a Banach space.

Our goal is to establish the universal limit theorem for planarly branched rough paths using the fixed point method, which relies on a series of detailed computations. Accordingly, for the remainder of the paper, we restrict our attention to the case $\alpha \in (\frac{1}{4}, \frac{1}{3}]$. For the fixed point approach in the case $\alpha \in (\frac{1}{3}, \frac{1}{2}]$ within the setting of Lyons' rough paths, we refer the reader to~\cite{FH2, Geng}.

Let $\alpha\in (\frac{1}{4},\frac{1}{3}]$. Then $N= \lfloor \frac{1}{\alpha } \rfloor = 3$
and $\calF^{\leq 2}=\{\etree, \bullet_a, \bullet_a\bullet_b, \tddeuxa{$a$}{$b$}\  \mid a,b\in A\}$. We have
\begin{align*}
\hmr(\bullet_a \bullet_b) =\bullet_b\otimes \bullet_a\,\text{ and }\,
\hmr(\tddeuxa{$a$}{$b$}\,\,\,)=\bullet_a\otimes \bullet_b,
\end{align*}
and so $$c(\bullet_a\bullet_b; \bullet_b, \bullet_a) = 1 = c(\tddeuxa{$a$}{$b$}\,\,\, ; \bullet_a, \bullet_b) .$$
Further for $\sigma, \sigma_1\in \calF^{\leq 2}$, we get
$$c(\sigma; \sigma',\bullet_a\bullet_b) = 0 = c(\sigma; \sigma',\tddeuxa{$a$}{$b$}\,\,\,).$$
Hence Definition~\mref{defn:cbrp} in the case of $\alpha\in (\frac{1}{4},\frac{1}{3}]$ can be recast as follows.

\begin{definition}
Let $\alpha\in (\frac{1}{4},\frac{1}{3}]$ and $\X\in  \brpt $.
A path $\Y: [0,T]\rightarrow \hm^{\leq 2} $ is called an {\bf $\X$-controlled planarly branched rough path} if
for any $a,b\in A$,
\begin{align*}
&R\Y_{s,t}^{\etree} = O(| t-s | ^{3\alpha  } ),\quad  R\Y_{s,t}^{\bullet_a} = O(| t-s | ^{2\alpha  } ), \\
& R\Y_{s,t}^{\bullet_a \bullet_b } = O(| t-s | ^{\alpha  } ),\quad  R\Y_{s,t}^{\tddeuxa{$a$}{$b$}}\ =O(| t-s | ^{\alpha  } ),
\end{align*}
where
\begin{align}
R\Y^{\etree}_{s,t}: =&\ \Y^{\etree}_{s,t}-\Big(\sum_{a\in A}\Y_{s} ^{\bullet_a}\X_{s,t} ^{\bullet_a}+\sum_{a,b\in A}\Y_{s} ^{\bullet_a\bullet_b} \X_{s,t} ^{\bullet_a\bullet_b} +
\sum_{a,b\in A}\Y_{s} ^{\tddeuxa{$a$}{$b$}}\ \, \X_{s,t} ^{\tddeuxa{$a$}{$b$}}\ \Big),\mlabel{eq:cbrp1} \\
R\Y_{s,t}^{\bullet_a}:=&\ \Y_{s,t}^{\bullet_a}-\sum_{b\in A}(\Y_{s}^{\bullet_a\bullet_b}+\Y_{s}^{\tddeuxa{$a$}{$b$}}\ \,)\X_{s,t}^{\bullet_b},\mlabel{eq:cbrp2}\\
R\Y_{s,t}^{\bullet_a \bullet_b}:=&\ \Y_{s,t}^{\bullet_a \bullet_b},\mlabel{eq:cbrp3}\\
R\Y_{s,t}^{\tddeuxa{$a$}{$b$}}:=&\ \Y_{s,t}^{\tddeuxa{$a$}{$b$}}.\mlabel{eq:cbrp4}
\end{align}
\mlabel{defn:cbrp1}
\end{definition}

\begin{remark}
In analogy to~\cite[Lemma 1.26]{Geng}, we have
\begin{align*}
\dfrac{|\Y^{\etree}_{s,t}|}{|t-s|^{\alpha}}=&\ \dfrac{|\sum_{a\in A}\Y_{s} ^{\bullet_a}\X_{s,t} ^{\bullet_a}+\sum_{a,b\in A}\Y_{s} ^{\bullet_a\bullet_b} \X_{s,t} ^{\bullet_a\bullet_b} +
\sum_{a,b\in A}\Y_{s} ^{\tddeuxa{$a$}{$b$}}\ \, \X_{s,t} ^{\tddeuxa{$a$}{$b$}}+R\Y_{s,t}^{\etree}|}{|t-s|^{\alpha}} \hspace{0.5cm} \text{(by  ~(\ref{eq:cbrp1}))}\\
\le&\ \sum_{a\in A}\dfrac{|\Y_{s} ^{\bullet_a}|\,|\X_{s,t} ^{\bullet_a}|}{|t-s|^{\alpha}}+\sum_{a,b\in A}\dfrac{|\Y_{s} ^{\bullet_a\bullet_b}|\,|\X_{s,t} ^{\bullet_a\bullet_b}|}{|t-s|^{\alpha}}+\sum_{a,b\in A}\dfrac{|\Y_{s} ^{\tddeuxa{$a$}{$b$}}\ \,|\,|\X_{s,t} ^{\tddeuxa{$a$}{$b$}}\ \,|}{|t-s|^{\alpha}}+\dfrac{|R\Y_{s,t}^{\etree}|}{|t-s|^{\alpha}}\\
\le&\ \sum_{a\in A}\dfrac{|\Y_{s} ^{\bullet_a}|\,|\X_{s,t} ^{\bullet_a}|}{|t-s|^{\alpha}}+\sum_{a,b\in A}T^{\alpha}\dfrac{|\Y_{s} ^{\bullet_a\bullet_b}|\,|\X_{s,t} ^{\bullet_a\bullet_b}|}{|t-s|^{2\alpha}}+\sum_{a,b\in A}T^{\alpha}\dfrac{|\Y_{s} ^{\tddeuxa{$a$}{$b$}}\ \,|\,|\X_{s,t} ^{\tddeuxa{$a$}{$b$}}\ \,|}{|t-s|^{2\alpha}}+T^{2\alpha}\dfrac{|R\Y_{s,t}^{\etree}|}{|t-s|^{3\alpha}} \\
\le&\ dk\|\X^{\bullet_a}\|_{\alpha}+d^2kT\|\X^{\bullet_a\bullet_b}\|_{2\alpha}+d^2kT\|\X^{\tddeuxa{$a$}{$b$}}\ \,\|_{2\alpha}+T^{3\alpha}\|R\Y^{\etree}\|_{3\alpha},
\end{align*}
where
$$k:=\max_{0\le s\le T} \Big\{|\Y_{s} ^{\bullet_a}|, |\Y_{s} ^{\bullet_a\bullet_b}|, |\Y_{s}^{\tddeuxa{$a$}{$b$}}\ \,| \, \big|\, a,b\in A\Big\}.$$
That is
$
\Y_{s,t}^{\etree} = O(| t-s | ^{\alpha  } ).
$ Similarly,
\begin{equation*}
\Y_{s,t}^{\bullet_a} =  O(| t-s | ^{\alpha  } ),\ \Y_{s,t}^{\bullet_a\bullet_b} =  O(| t-s | ^{\alpha  } )\,\text{ and }\, \Y_{s,t}^{\tddeuxa{$a$}{$b$}}\ \,= O(| t-s | ^{\alpha  } ).
\end{equation*}
\mlabel{re:ieq}
\end{remark}

\subsection{An upper bound of the composition with regular functions}\label{sec2.2}
In this subsection, we first show that the composition of a controlled planarly branched rough path with a regular function yields another controlled planarly branched rough path. Building on this result, we then establish an upper bound on the norm of the resulting controlled path.

Denote by $C^m(\mathbb{R}^n ; \mathbb{R}^n)$ the space of $m$ times continuously differentiable maps from $\mathbb{R}^n$ to $\mathbb{R}^n$ for $m\in \ZZ_{\geq 1}$.
For any $F\in  C^m(\mathbb{R}^n ; \mathbb{R}^n)$, define
\begin{equation}
\|F\|_{\infty }:=\sup_{x\in \mathbb{R}^n}|F(x)|.
\mlabel{eq:infF}
\end{equation}
A sub-space $C^{m}_b(\RR^n;\RR^n)\subset  C^m(\mathbb{R}^n ; \mathbb{R}^n) $ is given by those $F$ in $C^m(\mathbb{R}^n ; \mathbb{R}^n) $ such that
\begin{equation}
\|F\|_{C^{m}_b}:=\|F\|_{\infty }+\|F'\|_{\infty }+\cdots +\|F^{(m)}\|_{\infty } < \infty.
\mlabel{eq:fcbn}
\end{equation}

\begin{proposition}
Let $\alpha\in (\frac{1}{4},\frac{1}{3}]$ and $\X\in  \brpt $. Let $F\in C_{b}^{3}(\mathbb{R}^n; \mathbb{R}^n) $ and $\Y:[0,T]\rightarrow (\mathcal{H}_{\mathrm {MKW}}^{\leq 2})^n$ an $\X$-controlled planarly branched rough path above $Y:[0, T]\longrightarrow \RR^n$. Define
$$\Z: [0,T]\rightarrow (\mathcal{H}_{\mathrm {MKW}}^{\leq 2})^n,\quad t\mapsto \Z_{t} $$ by
\begin{equation}
\begin{aligned}
\Z^{\etree}_{t}:=&\ F(\Y^{\etree}_{t}) , \quad \Z^{\bullet_{a}}_{t}:=\ F'(\Y^{\etree}_{t})\Y^{\bullet_{a}}_{t},  \quad
\Z^{\mathbf{\tddeuxa{$a$}{$b$}}}_{t}\ \,:=\  F'(\Y^{\etree}_{t})\Y^{\mathbf{\tddeuxa{$a$}{$b$}}}_{t}\ \,,\\
\Z^{\bullet_{a}\bullet_{b}}_{t}:=&\  F'(\Y^{\etree}_{t})\Y^{\bullet_{a}\bullet_{b}}_{t}
+F''(\Y^{\etree}_{t})\Y^{\bullet_{a}}_{t}\Y^{\bullet_{b}}_{t}, \quad \forall a,b \in A.
\end{aligned}
\mlabel{eq:zcbrp1}
\end{equation}
Then $\Z$ is an $\X$-controlled planarly branched rough path above $F(Y)$, denoted by $F(\Y)$.
\mlabel{pp:regu}
\end{proposition}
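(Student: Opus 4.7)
The plan is to verify the four estimates of Definition~\mref{defn:cbrp1} case by case, from easiest to hardest. The two top-degree cases $\tau\in\{\bullet_a\bullet_b,\,\tddeuxa{$a$}{$b$}\,\,\}$, where $R\Z^\tau_{s,t}=\Z^\tau_{s,t}$, reduce to a telescoping decomposition such as
$$\Z^{\bullet_a\bullet_b}_{s,t} \;=\; F'(Y_s)\Y^{\bullet_a\bullet_b}_{s,t} + \bigl(F'(Y_t)-F'(Y_s)\bigr)\Y^{\bullet_a\bullet_b}_t + F''(Y_s)\bigl(\Y^{\bullet_a}_s \Y^{\bullet_b}_{s,t} + \Y^{\bullet_a}_{s,t}\Y^{\bullet_b}_t\bigr) + \bigl(F''(Y_t)-F''(Y_s)\bigr)\Y^{\bullet_a}_t\Y^{\bullet_b}_t;$$
by Remark~\mref{re:ieq} together with the boundedness and Lipschitz continuity of $F',F''$ (granted by $F\in C_b^3$) and $Y\in C^\alpha$, every summand is $O(|t-s|^\alpha)$. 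The analogous argument, omitting the $F''$-piece, handles $\tau=\tddeuxa{$a$}{$b$}\,\,$.

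For $R\Z^{\bullet_a}_{s,t}$, I would substitute~(\mref{eq:zcbrp1}) into~(\mref{eq:cbrp2}) and regroup:
$$R\Z^{\bullet_a}_{s,t} \;=\; F'(Y_s)\Y^{\bullet_a}_{s,t} - \sum_b F'(Y_s)\bigl(\Y^{\bullet_a\bullet_b}_s+\Y^{\tddeuxa{$a$}{$b$}}_s\bigr)\X^{\bullet_b}_{s,t} + \bigl(F'(Y_t)-F'(Y_s)\bigr)\Y^{\bullet_a}_t - \sum_b F''(Y_s)\Y^{\bullet_a}_s\Y^{\bullet_b}_s\X^{\bullet_b}_{s,t}.$$
Expanding $\Y^{\bullet_a}_{s,t}=R\Y^{\bullet_a}_{s,t}+\sum_b(\Y^{\bullet_a\bullet_b}_s+\Y^{\tddeuxa{$a$}{$b$}}_s)\X^{\bullet_b}_{s,t}$ from~(\mref{eq:cbrp2}) cancels the second sum against part of the first. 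Then Taylor-expanding $F'(Y_t)-F'(Y_s)=F''(Y_s)(Y_t-Y_s)+O(|t-s|^{2\alpha})$ and replacing $Y_t-Y_s$ by its first-order controlled expansion together with $\Y^{\bullet_a}_t=\Y^{\bullet_a}_s+O(|t-s|^\alpha)$ reduces the third term to $\sum_b F''(Y_s)\Y^{\bullet_a}_s\Y^{\bullet_b}_s\X^{\bullet_b}_{s,t}+O(|t-s|^{2\alpha})$, which cancels the last sum. What survives is $F'(Y_s)R\Y^{\bullet_a}_{s,t}+O(|t-s|^{2\alpha})=O(|t-s|^{2\alpha})$, as required.

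The remaining case $R\Z^{\etree}_{s,t}$ is the main difficulty and requires a third-order Taylor expansion:
$$F(Y_t)-F(Y_s) \;=\; F'(Y_s)\Y^{\etree}_{s,t} \;+\; \tfrac{1}{2}F''(Y_s)\bigl(\Y^{\etree}_{s,t}\bigr)^{\otimes 2} \;+\; O(|t-s|^{3\alpha}),$$
valid since $F\in C_b^3$ and $Y\in C^\alpha$. Substituting the controlled expansion~(\mref{eq:cbrp1}) for $\Y^{\etree}_{s,t}$, the first-order Taylor piece unfolds into the three sums subtracted in $R\Z^\etree_{s,t}$ coming from the $F'$-parts of $\Z^{\bullet_a}_s,\Z^{\bullet_a\bullet_b}_s,\Z^{\tddeuxa{$a$}{$b$}}_s$, together with the remainder $F'(Y_s)R\Y^{\etree}_{s,t}=O(|t-s|^{3\alpha})$. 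Modulo $O(|t-s|^{3\alpha})$, only the leading contribution $\sum_{a,b}\Y^{\bullet_a}_s\Y^{\bullet_b}_s\X^{\bullet_a}_{s,t}\X^{\bullet_b}_{s,t}$ of $(\Y^{\etree}_{s,t})^{\otimes 2}$ is relevant, and the character property $\X^{\bullet_a}_{s,t}\X^{\bullet_b}_{s,t}=\langle\X_{s,t},\bullet_a\shuffle\bullet_b\rangle$ combined with the symmetry of $F''$ in its two arguments rewrites this sum as $\sum_{a,b}F''(Y_s)\Y^{\bullet_a}_s\Y^{\bullet_b}_s\X^{\bullet_a\bullet_b}_{s,t}$, which is precisely the $F''$-contribution to $\Z^{\bullet_a\bullet_b}_s\X^{\bullet_a\bullet_b}_{s,t}$ (absorbing any $\tddeuxa{$a$}{$b$}\,\,$-terms produced by the planar shuffle into $\Z^{\tddeuxa{$a$}{$b$}}_s\X^{\tddeuxa{$a$}{$b$}}_{s,t}$). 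After all cancellations, only $F'(Y_s)R\Y^\etree_{s,t}$ and the third-order Taylor remainder survive, both $O(|t-s|^{3\alpha})$. The principal obstacle is this combinatorial matching at degree two: one must verify that the planar shuffle of $\bullet_a$ and $\bullet_b$ in $\hm^{\le 3}$, together with the symmetry of $F''$, reproduces exactly the composite coefficients $F''(Y_s)\Y^{\bullet_a}_s\Y^{\bullet_b}_s$ built into $\Z^{\bullet_a\bullet_b}_s$ (and, where relevant, $\Z^{\tddeuxa{$a$}{$b$}}_s$), so that all first- and second-order contributions cancel the corresponding subtracted terms.
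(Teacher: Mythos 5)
Your proposal is correct and follows essentially the same route as the paper's proof: Taylor expansion of $F$ (resp.\ $F'$, $F''$) at $\Y_s^{\etree}$ to the order dictated by the target exponent, substitution of the controlled expansions ~(\ref{eq:cbrp1})--(\ref{eq:cbrp2}), the character property of Definition~\ref{def:pbrp}~(b) to convert $\X^{\bullet_a}_{s,t}\X^{\bullet_b}_{s,t}$ into $\X^{\bullet_a\bullet_b}_{s,t}+\X^{\bullet_b\bullet_a}_{s,t}$, and the H\"older bounds of Remark~\ref{re:ieq}. One remark: your hedge about ``absorbing'' tree-terms produced by the planar shuffle is vacuous --- in $\hm$ the shuffle of $\bullet_a$ and $\bullet_b$ equals $\bullet_a\bullet_b+\bullet_b\bullet_a$ and produces no grafted tree, which is precisely why the quadratic Taylor term is matched entirely by the $F''$-part of $\Z^{\bullet_a\bullet_b}_s$ (the tree component of $\Z$ carrying no $F''$-term).
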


\begin{proof}
By Definition~\mref{defn:cbrp1}, it suffices to prove that
\begin{align*}
R\Z^{\etree}=&\ O(| t-s | ^{3\alpha  } ),\quad R\Z^{\bullet}= O(| t-s | ^{2\alpha  } ),\\
R\Z^{\bullet \bullet}=&\ O(| t-s | ^{\alpha  } ),\quad R\Z^{\tddeux}=O(| t-s | ^{\alpha  } ),
\end{align*}
where
\begin{align}
R\Z^{\etree}_{s,t}:=&\ \Z^{\etree}_{s,t}-\Big(\sum_{a\in A}\Z_{s} ^{\bullet_a}\X_{s,t} ^{\bullet_a}+\sum_{a,b\in A}\Z_{s} ^{\bullet_a\bullet_b} \X_{s,t} ^{\bullet_a\bullet_b} +
\sum_{a,b\in A}\Z_{s} ^{\tddeuxa{$a$}{$b$}}\ \, \X_{s,t} ^{\tddeuxa{$a$}{$b$}}\ \Big),\mlabel{eq:zcbrp2}  \\
R\Z_{s,t}^{\bullet_a}:=&\ \Z_{s,t}^{\bullet_a}-\sum_{b\in A}(\Z_{s}^{\bullet_a\bullet_b}+\Z_{s}^{\tddeuxa{$a$}{$b$}}\ \,)\X_{s,t}^{\bullet_b},\mlabel{eq:zcbrp3}\\
R\Z_{s,t}^{\tddeuxa{$a$}{$b$}}:=&\ \Z_{t}^{\tddeuxa{$a$}{$b$}}\ \,-\Z_{s}^{\tddeuxa{$a$}{$b$}},\mlabel{eq:zcbrp4}\\
R\Z_{s,t}^{\bullet_a \bullet_b}:=&\ \Z_{t}^{\bullet_a \bullet_b}-\Z_{s}^{\bullet_a \bullet_b}, \quad \forall a,b\in A.
\mlabel{eq:zcbrp5}
\end{align}
We divide the proceeding proof into four steps.\\

\noindent {\bf Step 1:} $R\Z^{\etree}=O(| t-s | ^{3\alpha  } )$ follows from
\begin{align*}
\Z_{s,t}^{\etree} =&\  F( \Y_{t}^{\etree} )-F( \Y_{s}^{\etree} )\\
=&\ F'( \Y_{s}^{\etree} ) \Y_{s,t}^{\etree}+\frac{1}{2}F''(\Y_{s}^{\etree}  ) (\Y_{s,t}^{\etree})^2+O( | t-s |^{3\alpha }   ) \hspace{0.5cm} (\text{by the Taylor expression of $F$ at $\Y_s^\etree$}) \\
=&\ F'( \Y_{s}^{\etree} )(\sum_{a\in A}\Y_{s} ^{\bullet_a}\X_{s,t} ^{\bullet_a}+\sum_{a,b\in A}\Y_{s} ^{\bullet_a\bullet_b} \X_{s,t} ^{\bullet_a\bullet_b} +
\sum_{a,b\in A}\Y_{s} ^{\tddeuxa{$a$}{$b$}}\ \, \X_{s,t} ^{\tddeuxa{$a$}{$b$}}\ )\\
&\ +\frac{1}{2}F''(\Y_{s}^{\etree}  )(\sum_{a\in A}\Y_{s} ^{\bullet_a}\X_{s,t} ^{\bullet_a}+\sum_{a,b\in A}\Y_{s} ^{\bullet_a\bullet_b} \X_{s,t} ^{\bullet_a\bullet_b} +
\sum_{a,b\in A}\Y_{s} ^{\tddeuxa{$a$}{$b$}}\ \, \X_{s,t} ^{\tddeuxa{$a$}{$b$}}\ )^2+O( | t-s |^{3\alpha }   ) \\
&\ \hspace{7cm} (\text{by (\ref{eq:cbrp1}) \text{ and } (\ref{eq:cbrp2})})\\
=&\  F'(\Y_{s}^{\etree})(\sum_{a\in A}\Y_{s} ^{\bullet_a}\X_{s,t} ^{\bullet_a}+\sum_{a,b\in A}\Y_{s} ^{\bullet_a\bullet_b} \X_{s,t} ^{\bullet_a\bullet_b} +
\sum_{a,b\in A}\Y_{s} ^{\tddeuxa{$a$}{$b$}}\ \, \X_{s,t} ^{\tddeuxa{$a$}{$b$}}\ )+\frac{1}{2}F''(\Y_{s}^{\etree}) (\sum_{a\in A}\Y_{s} ^{\bullet_a}\X_{s,t} ^{\bullet_a})^2+O(| t-s |^{3\alpha })\\
&\ \hspace{7cm} (\text{by Definition~\ref{def:pbrp}~(c)})\\
=&\ F'(\Y_{s}^{\etree})(\sum_{a\in A}\Y_{s} ^{\bullet_a}\X_{s,t} ^{\bullet_a}+\sum_{a,b\in A}\Y_{s} ^{\bullet_a\bullet_b} \X_{s,t} ^{\bullet_a\bullet_b} +
\sum_{b,c\in A}\Y_{s} ^{\tddeuxa{$b$}{$c$}}\ \, \X_{s,t} ^{\tddeuxa{$b$}{$c$}}\ )\\
&\ +\frac{1}{2}F''(\Y_{s}^{\etree})\sum_{a,b\in A}\Y_{s}^{\bullet_a}\Y_{s}^{\bullet_b}(\X _{s,t}^{\bullet_a}\X _{s,t}^{\bullet_b})+O(|t-s|^{3\alpha }).\\
=&\ F'(\Y_{s}^{\etree})(\sum_{a\in A}\Y_{s} ^{\bullet_a}\X_{s,t} ^{\bullet_a}+\sum_{a,b\in A}\Y_{s} ^{\bullet_a\bullet_b} \X_{s,t} ^{\bullet_a\bullet_b} +
\sum_{a,b\in A}\Y_{s} ^{\tddeuxa{$a$}{$b$}}\ \, \X_{s,t} ^{\tddeuxa{$a$}{$b$}}\ )\\
&\ +F''(\Y_{s}^{\etree})\sum_{a,b\in A}\Y_{s}^{\bullet_a}\Y_{s}^{\bullet_b}\X _{s,t}^{\bullet_a\bullet_b}+O(|t-s|^{3\alpha}) \hspace{0.5cm} (\text{by Definition~\ref{def:pbrp}~(b)})\\
=&\ \sum_{a\in A}(F'(\Y_{s}^{\etree})\Y_{s} ^{\bullet_a})\X _{s,t}^{\bullet_a}+\sum_{a,b\in A}(F'(\Y_{s}^{\etree})\Y_{s}^{\bullet_a\bullet_b}+F''(\Y_{s}^{\etree}) \Y_{s}^{\bullet_a}\Y_{s}^{\bullet_b})\X _{s,t}^{\bullet_a\bullet_b}\\
&\ +\sum_{a,b\in A}(F'(\Y_{s}^{\etree})\Y_{s}^{\tddeuxa{$a$}{$b$}}\ \,)\X _{s,t}^{\tddeuxa{$a$}{$b$}}\ \,+O(|t-s|^{3\alpha}) \\
=&\ \sum_{a\in A}\Z_{s} ^{\bullet_a}\X_{s,t} ^{\bullet_a}+\sum_{a,b\in A}\Z_{s} ^{\bullet_a\bullet_b} \X_{s,t} ^{\bullet_a\bullet_b} +
\sum_{a,b\in A}\Z_{s} ^{\tddeuxa{$a$}{$b$}}\ \, \X_{s,t} ^{\tddeuxa{$a$}{$b$}}\ +O(|t-s|^{3\alpha}).\hspace{0.5cm} (\text{by (\mref{eq:zcbrp1}}))
\end{align*}

\noindent {\bf Step 2:} $R\Z^{\bullet}=O(| t-s | ^{2\alpha  } )$ is obtained from
\begin{align*}
\Z_{s,t}^{\bullet_a}=&\ F'(\Y_{t}^{\etree})\Y_{t}^{\bullet_a}-F'(\Y_{s}^{\etree})\Y_{s}^{\bullet_a} \hspace{0.5cm} (\text{by (\mref{eq:zcbrp1}}))\\
=&\ (F'(\Y_{t}^{\etree})-F'(\Y_{s}^{\etree}))\Y_{t}^{\bullet_a}+F'(\Y_{s}^{\etree})\Y_{s,t}^{\bullet_a}\\
=&\ (F''(\Y_{s}^{\etree})\Y_{s,t}^{\etree}+O(|\Y_{s,t}^{\etree}|^{2}))\Y_{t}^{\bullet_a}+F'(\Y_{s}^{\etree})\Y_{s,t}^{\bullet_a}  \hspace{0.5cm} (\text{by the Taylor expression of $F'$ at $\Y_s^\etree$})\\
=&\ F''(\Y_{s}^{\etree})(\Y_{s,t}^{\etree} )\Y_{t}^{\bullet_a}+F'(\Y_{s}^{\etree})\Y_{s,t}^{\bullet_a} +O(|t-s|^{2\alpha}) \hspace{1cm} (\text{by Remark~\ref{re:ieq}  })\\
=&\ F''(\Y_{s}^{\etree})(\Y_{s,t}^{\etree})(\Y_{s}^{\bullet_a}+(\Y_{t}^{\bullet_a}-\Y_{s}^{\bullet_a}))+F'(\Y_{s}^{\etree})\Y_{s,t}^{\bullet_a} +O(|t-s|^{2\alpha})  \\
=&\ F''(\Y_{s}^{\etree})(\Y_{s,t}^{\etree})\Y_{s}^{\bullet_a}+F''(\Y_{s}^{\etree})(\Y_{s,t}^{\etree})\Y_{s,t}^{\bullet_a}+F'(\Y_{s}^{\etree})\Y_{s,t}^{\bullet_a} +O(|t-s|^{2\alpha }), \\
=&\ F''(\Y_{s}^{\etree})(\Y_{s,t}^{\etree})\Y_{s}^{\bullet_a}+F'(\Y_{s}^{\etree})\Y_{s,t}^{\bullet_a} +O(|t-s|^{2\alpha}) \hspace{1cm} (\text{by Remark~\ref{re:ieq}})\\
=&\ F''(\Y_{s}^{\etree})(\sum_{b\in A}\Y_{s} ^{\bullet_b}\X_{s,t}^{\bullet_b}+\sum_{b,c\in A}\Y_{s} ^{\bullet_b\bullet_c} \X_{s,t} ^{\bullet_b\bullet_c} +
\sum_{b,c\in A}\Y_{s} ^{\tddeuxa{$b$}{$c$}}\ \, \X_{s,t} ^{\tddeuxa{$b$}{$c$}}\ )\Y_{s}^{\bullet_a}+F'(\Y_{s}^{\etree})\Y_{s,t}^{\bullet_a} +O(|t-s|^{2\alpha})  \\
=&\ F''(\Y_{s}^{\etree})\sum_{b\in A}(\Y_{s} ^{\bullet_b}\X_{s,t}^{\bullet_b})\Y_{s}^{\bullet_a}+F'(\Y_{s}^{\etree})\Y_{s,t}^{\bullet_a} +O(|t-s|^{2\alpha})  \hspace{0.5cm} (\text{by Definition~\ref{def:pbrp}~(c)})\\
=&\ F''(\Y_{s}^{\etree})\sum_{b\in A}(\Y_{s} ^{\bullet_a}\Y_{s}^{\bullet_b})\X _{s,t}^{\bullet_b}+F'(\Y_{s}^{\etree})\Y_{s,t}^{\bullet_a} +O(|t-s|^{2\alpha})  \\
=&\ F''(\Y_{s}^{\etree})\sum_{b\in A}(\Y_{s} ^{\bullet_a}\Y_{s}^{\bullet_b})\X _{s,t}^{\bullet_b}+F'( \Y_{s}^{\etree} )(\sum_{b\in A}(\Y_{s}^{\bullet_a\bullet_b}+\Y_{s}^{\tddeuxa{$a$}{$b$}}\ \,)\X_{s,t}^{\bullet_b}+R\Y_{s,t}^{\bullet_a})+O(|t-s|^{2\alpha})  \\
=&\ F''(\Y_{s}^{\etree})\sum_{b\in A}(\Y_{s} ^{\bullet_a}\Y_{s}^{\bullet_b})\X _{s,t}^{\bullet_b}+F'( \Y_{s}^{\etree} )(\sum_{b\in A}(\Y_{s}^{\bullet_a\bullet_b}+\Y_{s}^{\tddeuxa{$a$}{$b$}}\ \,)\X_{s,t}^{\bullet_b})+O(|t-s|^{2\alpha})   \hspace{0.5cm} (\text{by Definition~\ref{defn:cbrp}})\\
=&\ \sum_{b\in A}\bigg(\Big(F''(\Y_{s}^{\etree})\Y_{s} ^{\bullet_a}\Y_{s}^{\bullet_b}+F'(\Y_{s}^{\etree})\Y_{s}^{\bullet_a\bullet_b}\Big)+\sum_{b\in A}\Big(F'( \Y_{s}^{\etree})\Y_{s}^{\tddeuxa{$a$}{$b$}}\ \,\Big)\bigg)\X _{s,t}^{\bullet_b} +O(|t-s|^{2\alpha})\\
=&\ \sum_{b\in A}(\Z_{s}^{\bullet_a\bullet_b}+\Z_{s}^{\tddeuxa{$a$}{$b$}}\ \,)\X _{s,t}^{\bullet_b} +O(|t-s|^{2\alpha}).
\end{align*}

\noindent {\bf Step 3:} $R\Z^{\tddeux}=O(| t-s | ^{\alpha  } )$ can be computed directly by
\begin{align*}
\Z_{s,t}^{\tddeuxa{$a$}{$b$}}=&\ \Z_{t}^{\tddeuxa{$a$}{$b$}}\ \,-\Z_{s}^{\tddeuxa{$a$}{$b$}}\\
=&\  F'( \Y_{t}^{\etree} )\Y_{t}^{\tddeuxa{$a$}{$b$}}\ \,-F'(\Y_{s}^{\etree})\Y_{s}^{\tddeuxa{$a$}{$b$}} \hspace{0.5cm} (\text{by (\mref{eq:zcbrp1}}))\\
=&\  (F'(\Y_{t}^{\etree})-F'(\Y_{s}^{\etree})) \Y_{t}^{\tddeuxa{$a$}{$b$}}\ \,+F'(\Y_{s}^{\etree})\Y_{s,t}^{\tddeuxa{$a$}{$b$}} \\
=&\  F''(\Y_{s}^{\etree})(\Y_{s,t}^{\etree})\Y_{t}^{\tddeuxa{$a$}{$b$}}\ \,+F'(\Y_{s}^{\etree})\Y_{s,t}^{\tddeuxa{$a$}{$b$}}\ \,+O(|t-s|^{2\alpha}) \hspace{0.5cm} (\text{by Lagrange's mean value theorem})\\
=&\ O( | t-s |^{\alpha }).\hspace{2cm} (\text{by Remark~\ref{re:ieq}  }).
\end{align*}

\noindent {\bf Step 4:} $R\Z^{\bullet \bullet}=O(| t-s | ^{\alpha  } )$ can be shown as
\begin{align*}
\Z_{s,t}^{\bullet_a\bullet_b}=&\ (F'(\Y_{t}^{\etree})\Y_{t}^{\bullet_a\bullet_b}+F''(\Y_{t}^{\etree}) \Y_{t}^{\bullet_b}\Y_{t}^{\bullet_c})-(F'(\Y_{s}^{\etree})\Y_{s}^{\bullet_a\bullet_b}+F''(\Y_{s}^{\etree})\Y_{s}^{\bullet_a}\Y_{s}^{\bullet_b})\\
&\ \hspace{7cm} (\text{by (\mref{eq:zcbrp1}})) \\
=&\ (F'(\Y_{t}^{\etree})\Y_{t}^{\bullet_a\bullet_b}-F'(\Y_{s}^{\etree})\Y_{s}^{\bullet_a\bullet_b})+(F''(\Y_{t}^{\etree})\Y_{t}^{\bullet_a}\Y_{t}^{\bullet_b}-F''(\Y_{s}^{\etree})\Y_{s}^{\bullet_a}\Y_{s}^{\bullet_b})\\
=&\ \Big((F'(\Y_{t}^{\etree})\Y_{t}^{\bullet_a\bullet_b}-F'(\Y_{s}^{\etree})\Y_{t}^{\bullet_a\bullet_b})+(F'(\Y_{s}^{\etree})\Y_{t}^{\bullet_a\bullet_b}-F'(\Y_{s}^{\etree})\Y_{s}^{\bullet_a\bullet_b})\Big)\\
&\ +\Big((F''(\Y_{t}^{\etree})\Y_{t}^{\bullet_a}\Y_{t}^{\bullet_b}-F''(\Y_{s}^{\etree})\Y_{t}^{\bullet_a}\Y_{t}^{\bullet_b})+(F''(\Y_{s}^{\etree})\Y_{t}^{\bullet_b}\Y_{t}^{\bullet_b}-F''(\Y_{s}^{\etree})\Y_{s}^{\bullet_a}\Y_{s}^{\bullet_b})\Big) \\
=&\ \Big((F'(\Y_{t}^{\etree})-F'(\Y_{s}^{\etree}))\Y_{t}^{\bullet_a\bullet_b}+F'(\Y_{s}^{\etree})\Y_{s,t}^{\bullet_a\bullet_b}\Big)\\
&\ +\Big((F''(\Y_{t}^{\etree})-F''(\Y_{s}^{\etree}))\Y_{t}^{\bullet_a}\Y_{t}^{\bullet_b}+F''(\Y_{s}^{\etree})(\Y_{t}^{\bullet_a}\Y_{t}^{\bullet_b}-\Y_{s}^{\bullet_a}\Y_{s}^{\bullet_b})\Big) \\
=&\ \Big((F''(\Y_{s}^{\etree})(\Y_{s,t}^{\etree})\Y_{t}^{\bullet_a\bullet_b}+O(|t-s|^{2\alpha}))+F'(\Y_{s}^{\etree})\Y_{s,t}^{\bullet_a\bullet_b}\Big) \\
&\ +\Big((F'''(\Y_{s}^{\etree})(\Y_{s,t}^{\etree})+O(|t-s|^{2\alpha}))\Y_{t}^{\bullet_a}\Y_{t}^{\bullet_b}+F''(\Y_{s}^{\etree})(\Y_{t}^{\bullet_a}\Y_{t}^{\bullet_b}-\Y_{t}^{\bullet_a}\Y_{s}^{\bullet_b}+\Y_{t}^{\bullet_a}\Y_{s}^{\bullet_b}-\Y_{s}^{\bullet_a}\Y_{s}^{\bullet_b})\Big)\\
&\ \hspace{6cm} (\text{by the Taylor expression of $F'$ at $\Y_s^\etree$ and  $F''$ at $\Y_s^\etree$})\\
=&\ F''(\Y_{s}^{\etree})\Y_{s,t}^{\etree}\Y_{t}^{\bullet_a\bullet_b}+F'(\Y_{s}^{\etree})\Y_{s,t}^{\bullet_a\bullet_b}+F'''(\Y_{s}^{\etree})\Y_{s,t}^{\etree}\Y_{t}^{\bullet_a}\Y_{t}^{\bullet_b}+F''(\Y_{s}^{\etree})\Y_{t}^{\bullet_a}\Y_{s,t}^{\bullet_b}\\
&\ +F''(\Y_{s}^{\etree})\Y_{s,t}^{\bullet_a}\Y_{s}^{\bullet_b}+O(|t-s|^{2\alpha}) \\
=&\ O( | t-s |^{\alpha } ).\hspace{4cm} (\text{by Remark~\ref{re:ieq} }) \qedhere
\end{align*}
\end{proof}

We now state the main result of this subsection, which provides an upper bound for the planarly branched rough path $\Z$ in Proposition~\mref{pp:regu}. Recall that $M(\bullet)$ denotes a universal function, increasing in all variables, whose expression may vary from line to line.

\begin{theorem}
With the setting in Proposition~\mref{pp:regu}, the following estimate holds
\begin{equation*}
\fan{\Z} _{\X;\alpha } \le 4d^2\|F\|_{C_{b}^{3}} M\Big(T, \sum_{a\in A}|\Y_{0}^{\bullet_a}|, \sum_{a,b\in A}|\Y_{0}^{\bullet_a\bullet_b}|, \sum_{a,b\in A}| \Y_{0}^{\tddeuxa{$a$}{$b$}}\ \,|, \fan{\Y} _{\X;\alpha }, \fan{\X} _{\alpha } \Big).
\end{equation*}
\mlabel{thm:stab1}
\end{theorem}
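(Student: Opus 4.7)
The plan is to revisit the four expansions appearing in the proof of Proposition~\ref{pp:regu}, turn each symbolic $O(|t-s|^{(3-|\tau|)\alpha})$ into an explicit bound, and then sum the resulting estimates over $\tau \in \calF^{\leq 2}$.

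\textbf{Step 1 (uniform sup bounds on $\Y$).} Before quantifying $R\Z^\tau$, one first bounds, for every $\tau \in \calF^{\leq 2}\setminus\{\etree\}$, the quantity $\sup_{s \in [0,T]} |\Y^\tau_s|$ by $|\Y^\tau_0|$ plus a universal function $M$ of $T$, $\fan{\Y}_{\X;\alpha}$, and $\fan{\X}_\alpha$. We proceed inductively in decreasing order of $|\tau|$: for $|\tau|=2$, identities~(\ref{eq:cbrp3})--(\ref{eq:cbrp4}) give $R\Y^\tau_{s,t}=\Y^\tau_{s,t}$, whence $|\Y^\tau_s|\le |\Y^\tau_0|+T^\alpha\|R\Y^\tau\|_\alpha$; for $|\tau|=1$, relation~(\ref{eq:cbrp2}) combined with the previous step yields a H\"older bound on $\Y^{\bullet_a}_{s,t}$ and hence on $\sup_s|\Y^{\bullet_a}_s|$. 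No analogous bound on $\Y^\etree_s$ is required, because derivatives of $F$ evaluated at $\Y^\etree_s$ are uniformly controlled by $\|F\|_{C^3_b}$.

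\textbf{Step 2 (quantitative versions of Steps 1--4 of Proposition~\ref{pp:regu}).} One next revisits each of the four cases producing $R\Z^\etree$, $R\Z^{\bullet_a}$, $R\Z^{\tddeuxa{$a$}{$b$}}$, and $R\Z^{\bullet_a\bullet_b}$, bounding every implicit remainder explicitly. Three types of contributions appear: (i) Lagrange remainders of Taylor expansions of $F$, $F'$, $F''$ about $\Y^\etree_s$, of the form $\tfrac{1}{k!}\|F^{(k)}\|_\infty (\Y^\etree_{s,t})^k$, controlled via the H\"older estimate $|\Y^\etree_{s,t}|\le M(\cdots)|t-s|^\alpha$ supplied by Remark~\ref{re:ieq}; (ii) mixed products of the shape $F^{(k)}(\Y^\etree_s)\,\Y^\sigma_s\, \X^\rho_{s,t}$ or $F^{(k)}(\Y^\etree_s)\,\Y^\sigma_{s,t}\,\Y^\rho_{s,t}$, controlled using Step 1 together with $\fan{\X}_\alpha$ and the H\"older bounds for the $\Y^\sigma$'s; and (iii) explicit remainder terms $F^{(k)}(\Y^\etree_s)\,R\Y^\tau_{s,t}$, bounded by $\|F\|_{C^3_b}\|R\Y^\tau\|_{(3-|\tau|)\alpha}|t-s|^{(3-|\tau|)\alpha}$. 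Combining these, for each $\tau\in\calF^{\leq 2}$ one arrives at
\[
\|R\Z^\tau\|_{(3-|\tau|)\alpha} \le \|F\|_{C^3_b}\cdot M\Bigl(T, \sum_{a}|\Y^{\bullet_a}_0|, \sum_{a,b}|\Y^{\bullet_a\bullet_b}_0|, \sum_{a,b}|\Y^{\tddeuxa{$a$}{$b$}}_0|, \fan{\Y}_{\X;\alpha}, \fan{\X}_\alpha\Bigr).
\]

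\textbf{Step 3 (summation).} Summing over $\tau\in \calF^{\leq 2}$ and using $|\calF^{\leq 2}|=1+d+2d^2\le 4d^2$ for $d\ge 1$ produces the announced prefactor. The main obstacle is the bookkeeping in Step 2: the four expansions contain numerous cross terms, and one must check carefully that every Lagrange remainder and every cross-product remainder is of order at least $(3-|\tau|)\alpha$ and can be absorbed into a single increasing function $M$ without introducing a circular dependence. The inductive sup-norm estimate of Step 1 is designed precisely to break this circularity, since at each stage the bound on $\sup_s|\Y^\tau_s|$ only involves data associated to forests of strictly larger size.
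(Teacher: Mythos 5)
Your proposal is correct and follows essentially the same route as the paper's proof: the paper likewise first derives the sup/H\"older bounds on $\Y^{\bullet_a}$, $\Y^{\bullet_a\bullet_b}$, $\Y^{\tddeuxa{$a$}{$b$}}\ $ in terms of $|\Y_0^\tau|$, $T$, $\fan{\Y}_{\X;\alpha}$, $\fan{\X}_\alpha$ (your Step 1, breaking the circularity exactly as in~(\ref{eq:iequ1})--(\ref{eq:iequ4})), then redoes the four expansions of Proposition~\ref{pp:regu} with explicit integral Taylor remainders and the cancellation of the lower-order terms, bounding each $\|R\Z^\tau\|_{(3-|\tau|)\alpha}$ by $\|F\|_{C_b^3}M(\cdots)$, and finally sums over the $1+d+2d^2\le 4d^2$ components. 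No essential difference or gap.
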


\begin{proof}
First, we give three upper bounds of $\Y^{\bullet_a}$, $\Y^{\tddeuxa{$a$}{$b$}}$\ \, and $\Y^{\bullet_a\bullet_b}$, respectively. For $\Y^{\bullet_a}$,
\begin{equation}
| \Y_{s}^{\bullet_a}  |= |\Y_{0}^{\bullet_a}+\Y_{0,s}^{\bullet_a}|\le |\Y_{0}^{\bullet_a}|+|\Y_{0,s}^{\bullet_a}|
\le |\Y_{0}^{\bullet_a}|+T^{\alpha}\frac{|\Y_{0,s}^{\bullet_a}|}{|s-0|^\alpha}
\le |\Y_{0}^{\bullet_a}|+T^{\alpha}\|\Y^{\bullet_a}\|_{\alpha}.
\mlabel{eq:iequ1}
\end{equation}
Similarly,
\begin{equation}
|\Y_{s}^{\tddeuxa{$a$}{$b$}}\ \,|\le |\Y_{0}^{\tddeuxa{$a$}{$b$}}\ \,|+T^{\alpha }\| \Y^{\tddeuxa{$a$}{$b$}}\ \,\|_{\alpha }
\mlabel{eq:iequ3}
\end{equation}
and
\begin{equation}
|\Y_{s}^{\bullet_a\bullet_b}|\le |\Y_{0}^{\bullet_a\bullet_b}|+T^{\alpha }\| \Y^{\bullet_a\bullet_b}\|_{\alpha }.
\mlabel{eq:iequ2}
\end{equation}
Further, since
\begin{align*}
|\Y_{s,t}^{\bullet_a}| &=|\sum_{b\in A}(\Y_{s}^{\bullet_a\bullet_b}+\Y_{s}^{\tddeuxa{$a$}{$b$}}\ \,)\X_{s,t}^{\bullet_b}+R\Y_{s,t}^{\bullet_a}| \hspace{0.5cm} \text{(by  ~(\ref{eq:cbrp2}))}\\
&\le \sum_{b\in A}(|\Y_{s}^{\bullet_a\bullet_b}|+|\Y_{s}^{\tddeuxa{$a$}{$b$}}\ \,|)|\X _{s,t}^{\bullet_b}|+|R\Y_{s,t}^{\bullet_a}|\\
&\le \sum_{b\in A}\Big((|\Y_{0}^{\bullet_a\bullet_b}|+T^{\alpha }\| \Y^{\bullet_a\bullet_b}\|_{\alpha })+(|\Y_{0}^{\tddeuxa{$a$}{$b$}}\ \,|+T^{\alpha }\| \Y^{\tddeuxa{$a$}{$b$}}\ \,\|_{\alpha })\Big)|\X _{s,t}^{\bullet_b}|+|R\Y_{s,t}^{\bullet_a}|, \hspace{0.5cm} (\text{by ~(\ref{eq:iequ3}) \text{ and } (\ref{eq:iequ2})})
\end{align*}
we have
\begin{equation}
\|\Y^{\bullet_a}\|_{\alpha } =\sup_{s\ne t\in [ 0,T ] } \frac{|\Y_{s,t}^{\bullet_a}|}{| t-s |^{\alpha}}  \le \sum_{b\in A}\Big((|\Y_{0}^{\bullet_a\bullet_b}|+T^{\alpha }\| \Y^{\bullet_a\bullet_b}\|_{\alpha })+(|\Y_{0}^{\tddeuxa{$a$}{$b$}}\ \,|+T^{\alpha }\| \Y^{\tddeuxa{$a$}{$b$}}\ \,\|_{\alpha })\Big)\| \X^{\bullet_b} \|_{\alpha }+T^{\alpha }\|R\Y^{\bullet_a}\|_{2\alpha },
\mlabel{eq:yaalpha}
\end{equation}
and so
\begin{equation}
\begin{aligned}
|\Y_{s}^{\bullet_a}|&\ \le |\Y_{0}^{\bullet_a}|+T^{\alpha }\| \Y^{\bullet_a}\|_{\alpha } \hspace{4cm}  (\text{by  ~(\ref{eq:iequ1})})\\
&\ \le |\Y_{0}^{\bullet_a}|+T^{\alpha}\sum_{b\in A}\Big((|\Y_{0}^{\bullet_a\bullet_b}|+T^{\alpha }\| \Y^{\bullet_a\bullet_b}\|_{\alpha })+(|\Y_{0}^{\tddeuxa{$a$}{$b$}}\ \,|+T^{\alpha }\| \Y^{\tddeuxa{$a$}{$b$}}\ \,\|_{\alpha })\Big)\| \X^{\bullet_b} \|_{\alpha }+T^{2\alpha }\|R\Y^{\bullet_a}\|_{2\alpha }. \\
&\ \hspace{7cm} (\text{by  ~(\ref{eq:yaalpha})})
\mlabel{eq:iequ4}
\end{aligned}
\end{equation}

\noindent Next, it follows from  ~(\ref{eq:normy}) that
$$\fan{\Z}_{\X;\alpha }= \|R\Z^{\etree }\|_{3\alpha}+\sum_{a\in A}\|R\Z^{\bullet_a}\|_{2\alpha }+\sum_{a,b\in A}\|R\Z^{\tddeuxa{$a$}{$b$}}\ \,\|_{\alpha }+\sum_{a,b\in A}\| R\Z^{\bullet_a\bullet_b}\|_{\alpha }, $$
whence we only need to estimate each term on the right hand side of the above equation.
We break the remaining proof down into four steps.\\

\noindent{\bf Step 1:}
For the first term $\| R\Z^{\etree } \| _{3\alpha }$, we have
\begin{align}
|R\Z_{s,t}^{\etree}|=&\ \Big|\Z^{\etree}_{s,t}-\Big(\sum_{a\in A}\Z_{s} ^{\bullet_a}\X_{s,t} ^{\bullet_a}+\sum_{a,b\in A}\Z_{s} ^{\bullet_a\bullet_b} \X_{s,t} ^{\bullet_a\bullet_b} +
\sum_{a,b\in A}\Z_{s} ^{\tddeuxa{$a$}{$b$}}\ \, \X_{s,t} ^{\tddeuxa{$a$}{$b$}}\ \Big)\Big| \hspace{0.5cm} (\text{by ~(\ref{eq:cbrp1})})\nonumber\\
=&\ \Big|F(\Y_{t}^{\etree})-F(\Y_{s}^{\etree})-\Big(\sum_{a\in A}\Z_{s} ^{\bullet_a}\X_{s,t} ^{\bullet_a}+\sum_{a,b\in A}\Z_{s} ^{\bullet_a\bullet_b} \X_{s,t} ^{\bullet_a\bullet_b}+\sum_{a,b\in A}\Z_{s} ^{\tddeuxa{$a$}{$b$}}\ \, \X_{s,t} ^{\tddeuxa{$a$}{$b$}}\ \,\Big)\Big| \hspace{0.5cm} (\text{by  ~(\ref{eq:zcbrp1})})\nonumber\\
=& \Big|F'(\Y_{s}^{\etree})\Y_{s,t}^{\etree}+\frac{1}{2}F''(\Y_{s}^{\etree})(\Y_{s,t}^{\etree})^2+\displaystyle\int_0^1\frac{(1-\theta)^2}{2}F'''(\Y_{s}^{\etree} +\theta\Y_{s,t}^{\etree})( \Y_{s,t}^{\etree})^3d\theta\nonumber\\
&\ -\Big(\sum_{a\in A}\Z_{s} ^{\bullet_a}\X_{s,t} ^{\bullet_a}+\sum_{a,b\in A}\Z_{s} ^{\bullet_a\bullet_b} \X_{s,t} ^{\bullet_a\bullet_b}+\sum_{a,b\in A}\Z_{s} ^{\tddeuxa{$a$}{$b$}}\ \, \X_{s,t} ^{\tddeuxa{$a$}{$b$}}\ \,\Big)\Big| \nonumber\\
&\ \hspace{7cm} (\text{by the Taylor expression of $F$ at $\Y_s^\etree$})\nonumber\\
=&\ \Big|F'(\Y_{s}^{\etree})\Big(\sum_{a\in A}\Y_{s} ^{\bullet_a}\X_{s,t} ^{\bullet_a}+\sum_{a,b\in A}\Y_{s} ^{\bullet_a\bullet_b} \X_{s,t} ^{\bullet_a\bullet_b} +
\sum_{a,b\in A}\Y_{s} ^{\tddeuxa{$a$}{$b$}}\ \, \X_{s,t} ^{\tddeuxa{$a$}{$b$}}\ \,+R\Y^{\etree}_{s,t}\Big)+\frac{1}{2}F''(\Y_{s}^{\etree})(\Y_{s,t}^{\etree})^2\nonumber\\
&\ +\displaystyle\int_0^1\frac{(1-\theta)^2}{2}F'''(\Y_{s}^{\etree} +\theta\Y_{s,t}^{\etree})( \Y_{s,t}^{\etree})^3d\theta-\Big(\sum_{a\in A}\Z_{s} ^{\bullet_a}\X_{s,t} ^{\bullet_a}+\sum_{a,b\in A}\Z_{s} ^{\bullet_a\bullet_b} \X_{s,t} ^{\bullet_a\bullet_b}+\sum_{a,b\in A}\Z_{s} ^{\tddeuxa{$a$}{$b$}}\ \, \X_{s,t} ^{\tddeuxa{$a$}{$b$}}\ \,\Big)\Big| \nonumber\\
&\ \hspace{7cm} (\text{by  ~(\ref{eq:cbrp1})}) \nonumber\\
=&\ \Big|\sum_{a\in A}(F'(\Y_{s}^{\etree})\Y_{s} ^{\bullet_a}-\Z_{s} ^{\bullet_a})\X_{s,t} ^{\bullet_a}+\sum_{a,b\in A}(F'(\Y_{s}^{\etree})\Y_{s} ^{\bullet_a\bullet_b}-\Z_{s} ^{\bullet_a\bullet_b})\X_{s,t} ^{\bullet_a\bullet_b}+\sum_{a,b\in A}(F'(\Y_{s}^{\etree})\Y_{s} ^{\tddeuxa{$a$}{$b$}}\ \, -\Z_{s} ^{\tddeuxa{$a$}{$b$}}\ \, )\X_{s,t} ^{\tddeuxa{$a$}{$b$}} \nonumber\\
&\ +F'(\Y_{s}^{\etree})R\Y^{\etree}_{s,t}+\frac{1}{2}
F''(\Y_{s}^{\etree})(\Y_{s,t}^{\etree})^2+\displaystyle\int_0^1\frac{(1-\theta)^2}{2}F'''(\Y_{s}^{\etree} +\theta\Y_{s,t}^{\etree})( \Y_{s,t}^{\etree})^3d\theta\Big|  \nonumber\\
&\ \hspace{7cm} (\text{by combining like terms})\nonumber\\
=&\ \Big|-\sum_{a,b\in A}F''(\Y_{s}^{\etree})\Y_{s}^{\bullet_a}\Y_{s}^{\bullet_b}\X_{s,t} ^{\bullet_a\bullet_b}+F'(\Y_{s}^{\etree})R\Y^{\etree}_{s,t}+\frac{1}{2}F''(\Y_{s}^{\etree})(\Y_{s,t}^{\etree})^2\nonumber\\
&+\displaystyle\int_0^1\frac{(1-\theta)^2}{2}F'''(\Y_{s}^{\etree} +\theta\Y_{s,t}^{\etree})( \Y_{s,t}^{\etree})^3d\theta\Big| \nonumber\\
&\ \hspace{0.5cm} (\text{by  ~(\ref{eq:zcbrp1}), the first and third summands vanish and the second summand is simplified}) \nonumber\\
=&\ \Big|-\sum_{a,b\in A}F''(\Y_{s}^{\etree})\Y_{s}^{\bullet_a}\Y_{s}^{\bullet_b}\X_{s,t} ^{\bullet_a\bullet_b} +F'(\Y_{s}^{\etree})R\Y^{\etree}_{s,t}+\frac{1}{2}F''(\Y_{s}^{\etree})\Big(\sum_{a\in A}\Y_{s} ^{\bullet_a}\X_{s,t} ^{\bullet_a}\nonumber\\
&\ +\sum_{a,b\in A}\Y_{s} ^{\bullet_a\bullet_b} \X_{s,t} ^{\bullet_a\bullet_b} +
\sum_{a,b\in A}\Y_{s} ^{\tddeuxa{$a$}{$b$}}\ \, \X_{s,t} ^{\tddeuxa{$a$}{$b$}}\ \, +R\Y^{\etree}_{s,t}\Big)^2+\displaystyle\int_0^1\frac{(1-\theta)^2}{2}F'''(\Y_{s}^{\etree}+\theta\Y_{s,t}^{\etree})( \Y_{s,t}^{\etree})^3d\theta\Big| \\
&\ \hspace{7cm} (\text{by  ~(\ref{eq:cbrp1})})\nonumber\\
=&\ \Big|-\sum_{a,b\in A}F''(\Y_{s}^{\etree})\Y_{s}^{\bullet_a}\Y_{s}^{\bullet_b}\X_{s,t} ^{\bullet_a\bullet_b}+F'(\Y_{s}^{\etree})R\Y^{\etree}_{s,t}+\frac{1}{2}F''(\Y_{s}^{\etree})\Big(2\sum_{a,b\in A}\Y_{s}^{\bullet_a}\Y_{s}^{\bullet_b}\X_{s,t} ^{\bullet_a\bullet_b}\nonumber\\
&\ +2\sum_{a,b,c\in A}\Y_{s}^{\bullet_a}\Y_{s}^{\bullet_b\bullet_c}(\X_{s,t} ^{\bullet_a\bullet_b\bullet_c}+\X_{s,t} ^{\bullet_b\bullet_c\bullet_a})+2\sum_{a,b,c\in A}\Y_{s}^{\bullet_a}\Y_{s}^{\tddeuxa{$b$}{$c$}}\ \,(\X_{s,t} ^{\bullet_a\tddeuxa{$b$}{$c$}}\ \,+\X_{s,t} ^{\tddeuxa{$b$}{$c$}\ \,\bullet_a})+2\Y_{s,t}^{\etree}R\Y^{\etree}_{s,t} \Big)\nonumber\\
&\ +\displaystyle\int_0^1\frac{(1-\theta)^2}{2}F'''(\Y_{s}^{\etree}+\theta\Y_{s,t}^{\etree})( \Y_{s,t}^{\etree})^3d\theta\Big|\nonumber\\
&\ \hspace{3cm} (\text{by Definition~\mref{def:pbrp}~(b), $\X_{s,t}^\tau = 0$ for $|\tau|\geq 4$ and  ~(\ref{eq:cbrp1})} )\nonumber\\
=&\ \Big|F'(\Y_{s}^{\etree})R\Y^{\etree}_{s,t}+F''(\Y_{s}^{\etree})\sum_{a,b,c\in A}\Y_{s}^{\bullet_a}\Y_{s}^{\bullet_b\bullet_c}(\X_{s,t} ^{\bullet_a\bullet_b\bullet_c}+\X_{s,t} ^{\bullet_b\bullet_c\bullet_a})\nonumber\\
&\ +F''(\Y_{s}^{\etree})\sum_{a,b,c\in A}\Y_{s}^{\bullet_a}\Y_{s}^{\tddeuxa{$b$}{$c$}}\ \,(\X_{s,t} ^{\bullet_a\tddeuxa{$b$}{$c$}}\ \,+\X_{s,t} ^{\tddeuxa{$b$}{$c$}\ \,\bullet_a})\nonumber\\
&\ +F''(\Y_{s}^{\etree})\Y_{s,t}^{\etree}R\Y^{\etree}_{s,t}+\displaystyle\int_0^1\frac{(1-\theta)^2}{2}F'''( \Y_{s}^{\etree } +\theta \Y_{s,t}^{\etree } )( \Y_{s,t}^{\etree} ) ^3d\theta \Big| \mlabel{eq:simp1}\\
&\hspace{8cm} (\text{by combining like terms})\nonumber\\
\le&\ \|F'\|_{\infty}|R\Y_{s,t}^{\etree}|+\|F''\|_{\infty}\sum_{a,b,c\in A}(|\Y_{0}^{\bullet_a}|+T^{\alpha}\|\Y^{\bullet_a}\|_{\alpha})(|\Y_{0}^{\bullet_b\bullet_c}|+T^{\alpha }\| \Y^{\bullet_b\bullet_c}\|_{\alpha })(|\X_{s,t} ^{\bullet_a\bullet_b\bullet_c}|+|\X_{s,t} ^{\bullet_b\bullet_c\bullet_a}|)\nonumber\\
&\ +\|F''\|_{\infty}\sum_{a,b,c\in A}(|\Y_{0}^{\bullet_a}|+T^{\alpha}\|\Y^{\bullet_a}\|_{\alpha})(|\Y_{0}^{\tddeuxa{$b$}{$c$}}\ \,|+T^{\alpha }\| \Y^{\tddeuxa{$b$}{$c$}}\ \,\|_{\alpha })(|\X_{s,t} ^{\bullet_a\tddeuxa{$b$}{$c$}}\ \,|+|\X_{s,t} ^{\tddeuxa{$b$}{$c$}\ \,\bullet_a}|)\nonumber\\
&\ +\|F''\|_{\infty}|\Y_{s,t}^{\etree}|\,|R\Y_{s,t}^{\etree}|+\dfrac{1}{6}\|F'''\|_{\infty } |\Y_{s,t}^{\etree} |^3 \nonumber\\
&\hspace{5cm} (\text{by  ~(\ref{eq:iequ1}), (\ref{eq:iequ3}), (\ref{eq:iequ2}) and $\displaystyle\int_0^1 \frac{(1-\theta)^2}{2} d\theta =\frac{1}{6}$}) \nonumber\\
=&\ \|F'\|_{\infty}|R\Y_{s,t}^{\etree}|+\|F''\|_{\infty}\sum_{a,b,c\in A}(|\Y_{0}^{\bullet_a}|+T^{\alpha}\|\Y^{\bullet_a}\|_{\alpha})(|\Y_{0}^{\bullet_b\bullet_c}|+T^{\alpha }\| \Y^{\bullet_b\bullet_c}\|_{\alpha })(|\X_{s,t} ^{\bullet_a\bullet_b\bullet_c}|+|\X_{s,t} ^{\bullet_b\bullet_c\bullet_a}|)\nonumber\\
&\ +\|F''\|_{\infty}\sum_{a,b,c\in A}(|\Y_{0}^{\bullet_a}|+T^{\alpha}\|\Y^{\bullet_a}\|_{\alpha})(|\Y_{0}^{\tddeuxa{$b$}{$c$}}\ \,|+T^{\alpha }\| \Y^{\tddeuxa{$b$}{$c$}}\ \,\|_{\alpha })(|\X_{s,t} ^{\bullet_a\tddeuxa{$b$}{$c$}}\ \,|+|\X_{s,t} ^{\tddeuxa{$b$}{$c$}\ \,\bullet_a}|)\nonumber\\
&\ +\|F''\|_{\infty}\Big|\sum_{a\in A}\Y_{s} ^{\bullet_a}\X_{s,t} ^{\bullet_a}+\sum_{a,b\in A}\Y_{s} ^{\bullet_a\bullet_b} \X_{s,t} ^{\bullet_a\bullet_b} +
\sum_{a,b\in A}\Y_{s} ^{\tddeuxa{$a$}{$b$}}\ \, \X_{s,t} ^{\tddeuxa{$a$}{$b$}}\ \,+R\Y^{\etree}_{s,t}\Big|\,|R\Y_{s,t}^{\etree}|\nonumber\\
&\ +\dfrac{1}{6}\|F'''\|_{\infty }\Big|\sum_{a\in A}\Y_{s} ^{\bullet_a}\X_{s,t} ^{\bullet_a}+\sum_{a,b\in A}\Y_{s} ^{\bullet_a\bullet_b} \X_{s,t} ^{\bullet_a\bullet_b} +
\sum_{a,b\in A}\Y_{s} ^{\tddeuxa{$a$}{$b$}}\ \, \X_{s,t} ^{\tddeuxa{$a$}{$b$}}\ \,+R\Y^{\etree}_{s,t}\Big|^3.\nonumber\\
&\hspace{5cm} (\text{the fourth and fifth summands use  ~(\ref{eq:cbrp1})})\nonumber
\end{align}
Carrying the calculations further, we get
\begin{align}
&\ \dfrac{|R\Z_{s,t}^{\etree}|}{|t-s|^{3\alpha}}\nonumber\\
\le&\ \|F'\|_{\infty }\dfrac{|R\Y_{s,t}^{\etree}|}{|t-s|^{3\alpha}}+\|F''\|_{\infty}\sum_{a,b,c\in A } (|\Y_{0}^{\bullet_a}|+T^{\alpha}\|\Y^{\bullet_a} \|_{\alpha}) (|\Y_{0}^{\bullet_b\bullet_c}| +T^{\alpha }\| \Y^{\bullet_b\bullet_c}\|_{\alpha })\Big(\dfrac{|\X_{s,t} ^{\bullet_a\bullet_b\bullet_c}|}{|t-s|^{3\alpha}}+\dfrac{|\X_{s,t} ^{\bullet_b\bullet_c\bullet_a}|}{|t-s|^{3\alpha}}\Big)\nonumber\\
&\ +\|F''\|_{\infty}\sum_{a,b,c\in A}(|\Y_{0}^{\bullet_a}|+T^{\alpha}\|\Y^{\bullet_a}\|_{\alpha})(|\Y_{0}^{\tddeuxa{$b$}{$c$}}\ \,|+T^{\alpha }\| \Y^{\tddeuxa{$b$}{$c$}}\ \,\|_{\alpha })\Big(\dfrac{|\X_{s,t} ^{\bullet_a\tddeuxa{$b$}{$c$}}\ \,|}{|t-s|^{3\alpha}}+\dfrac{|\X_{s,t} ^{\tddeuxa{$b$}{$c$}\ \,\bullet_a}|}{|t-s|^{3\alpha}}\Big)\nonumber\\
&\ +\|F''\|_{\infty}T^{\alpha}\Bigg(\sum_{a\in A}|\Y_{s}^{\bullet_a}\dfrac{|\X _{s,t}^{\bullet_a}|}{|t-s|^{\alpha}}+\sum_{a,b\in A}|\Y_{s}^{\bullet_a\bullet_b}|\dfrac{|\X _{s,t}^{\bullet_a\bullet_b}|}{|t-s|^{\alpha}}+\sum_{a,b\in A}|\Y_{s}^{\tddeuxa{$a$}{$b$}}\ \,|\dfrac{|\X _{s,t}^{\tddeuxa{$a$}{$b$}}\ \,|}{|t-s|^{\alpha}}+\dfrac{|R\Y_{s,t}^{\etree}|}{|t-s|^{\alpha}}\Bigg)\dfrac{|R\Y_{s,t}^{\etree}|}{|t-s|^{3\alpha}}\nonumber\\
&\ + \dfrac{1}{6}\|F'''\|_{\infty} \Bigg(\sum_{a\in A}|\Y_{s}^{\bullet_a}|\dfrac{|\X _{s,t}^{\bullet_a}|}{|t-s|^{\alpha}}+\sum_{a,b\in A}|\Y_{s}^{\bullet_a\bullet_b}|\dfrac{|\X _{s,t}^{\bullet_a\bullet_b}|}{|t-s|^{\alpha}}+\sum_{a,b\in A}|\Y_{s}^{\tddeuxa{$a$}{$b$}}\ \,|\dfrac{|\X _{s,t}^{\tddeuxa{$a$}{$b$}}\ \,|}{|t-s|^{\alpha}}+\dfrac{|R\Y_{s,t}^{\etree}|}{|t-s|^{\alpha}}\Bigg)^3\nonumber\\
\le&\ \|F'\|_{\infty}\|R\Y^{\etree}\|_{3\alpha}+\|F''\|_{\infty}\sum_{a,b,c\in A } (|\Y_{0}^{\bullet_a}|+T^{\alpha}\|\Y^{\bullet_a} \|_{\alpha}) |\Y_{0}^{\bullet_b\bullet_c}| +T^{\alpha }(\| \Y^{\bullet_b\bullet_c}\|_{\alpha })(\|\X^{\bullet_a\bullet_b\bullet_c}\|_{3\alpha}+\|\X^{\bullet_b\bullet_c\bullet_a}\|_{3\alpha})\nonumber\\
&\ +\|F''\|_{\infty}\sum_{a,b,c\in A}(|\Y_{0}^{\bullet_a}|+T^{\alpha}\|\Y^{\bullet_a}\|_{\alpha})(|\Y_{0}^{\tddeuxa{$b$}{$c$}}\ \,|+T^{\alpha }\| \Y^{\tddeuxa{$b$}{$c$}}\ \,\|_{\alpha })(\|\X^{\bullet_a\tddeuxa{$b$}{$c$}}\ \,\|_{3\alpha}+\|\X^{\tddeuxa{$b$}{$c$}\ \,\bullet_a}\|_{3\alpha})\nonumber\\
&\ +\|F''\|_{\infty}T^{\alpha}\Big(\sum_{a\in A}|\Y_{s}^{\bullet_a}|\,\|\X^{\bullet_a}\|_{\alpha}+T^{\alpha}\sum_{a,b\in A}|\Y_{s}^{\bullet_a\bullet_b}|\,\|\X ^{\bullet_a\bullet_b}\|_{2\alpha}+T^{\alpha}\sum_{a,b\in A}|\Y_{s}^{\tddeuxa{$a$}{$b$}}\ \,|\,\|\X^{\tddeuxa{$a$}{$b$}}\ \,|_{2\alpha}+T^{2\alpha}\|R\Y^{\etree}\|_{3\alpha}\Big)\|R\Y^{\etree}\|_{3\alpha}\nonumber\\
&\ + \dfrac{1}{6}\|F'''\|_{\infty} \Big(\sum_{a\in A}|\Y_{s}^{\bullet_a}|\,\|\X^{\bullet_a}\|_{\alpha}+T^{\alpha}\sum_{a,b\in A}|\Y_{s}^{\bullet_a\bullet_b}|\,\|\X ^{\bullet_a\bullet_b}\|_{2\alpha}+T^{\alpha}\sum_{a,b\in A}|\Y_{s}^{\tddeuxa{$a$}{$b$}}\ \,|\,\|\X^{\tddeuxa{$a$}{$b$}}\ \,|_{2\alpha}+T^{2\alpha}\|R\Y^{\etree}\|_{3\alpha}\Big) ^3 \nonumber\\
&\ \hspace{7cm}(\text{by  ~\meqref{eq:normx} and~\meqref{eq:remainry}})\nonumber\\
\le&\ \|F'\|_{\infty}\|R\Y^{\etree}\|_{3\alpha}+\|F''\|_{\infty}\sum_{a,b,c\in A } \Bigg(|\Y_{0}^{\bullet_a}|+T^{\alpha}\sum_{h\in A}\big((|\Y_{0}^{\bullet_a\bullet_h}|+T^{\alpha }\| \Y^{\bullet_a\bullet_h}\|_{\alpha })+(|\Y_{0}^{\tddeuxa{$a$}{$h$}}\ \,|+T^{\alpha }\| \Y^{\tddeuxa{$a$}{$h$}}\ \,\|_{\alpha })\big)\| \X^{\bullet_h} \|_{\alpha }\nonumber\\
&+T^{2\alpha }\|R\Y^{\bullet_a}\|_{2\alpha }\Bigg)(|\Y_{0}^{\bullet_b\bullet_c}| +T^{\alpha }\| \Y^{\bullet_b\bullet_c}\|_{\alpha })(\|\X^{\bullet_a\bullet_b\bullet_c}\|_{3\alpha}+\|\X^{\bullet_b\bullet_c\bullet_a}\|_{3\alpha})\nonumber\\
&\ \hspace{7cm}(\text{by  ~(\ref{eq:yaalpha})})  \nonumber\\
&\ +\|F''\|_{\infty}\sum_{a,b,c\in A}\Bigg(|\Y_{0}^{\bullet_a}|+T^{\alpha}\sum_{h\in A}\bigg((|\Y_{0}^{\bullet_a\bullet_h}|+T^{\alpha }\| \Y^{\bullet_a\bullet_h}\|_{\alpha })+(|\Y_{0}^{\tddeuxa{$a$}{$h$}}\ \,|+T^{\alpha }\| \Y^{\tddeuxa{$a$}{$h$}}\ \,\|_{\alpha })\bigg)\| \X^{\bullet_h} \|_{\alpha }+T^{2\alpha }\|R\Y^{\bullet_a}\|_{2\alpha }\Bigg)\nonumber\\
&\ \hspace{0.47cm}(|\Y_{0}^{\tddeuxa{$b$}{$c$}}\ \,|+T^{\alpha }\| \Y^{\tddeuxa{$b$}{$c$}}\ \,\|_{\alpha })(\|\X^{\bullet_a\tddeuxa{$b$}{$c$}}\ \,\|_{3\alpha}+\|\X^{\tddeuxa{$b$}{$c$}\ \,\bullet_a}\|_{3\alpha})\nonumber\\
&\ \hspace{7cm}(\text{by  ~(\ref{eq:yaalpha})}) \nonumber\\
&\ +\|F''\|_{\infty}T^{\alpha}\Bigg(\sum_{a\in A}\Big(|\Y_{0}^{\bullet_a}|+T^{\alpha}\sum_{b\in A}\big((|\Y_{0}^{\bullet_a\bullet_b}|+T^{\alpha }\| \Y^{\bullet_a\bullet_b}\|_{\alpha })+(|\Y_{0}^{\tddeuxa{$a$}{$b$}}\ \,|+T^{\alpha }\| \Y^{\tddeuxa{$a$}{$b$}}\ \,\|_{\alpha })\big)\nonumber\\
&\ \hspace{0.47cm}\|\X^{\bullet_a}\|_{\alpha }+T^{2\alpha }\|R\Y^{\bullet_a}\|_{2\alpha }\Big)\|\X^{\bullet_b}\|_{\alpha}+T^{\alpha}\sum_{a,b\in A}(|\Y_{0}^{\bullet_a\bullet_b}|+T^{\alpha }\| \Y^{\bullet_a\bullet_b}\|_{\alpha })\,\|\X ^{\bullet_a\bullet_b}\|_{2\alpha}\nonumber\\
&\ +T^{\alpha}\sum_{a,b\in A}(|\Y_{0}^{\tddeuxa{$a$}{$b$}}\ \,|+T^{\alpha }\| \Y^{\tddeuxa{$a$}{$b$}}\ \,\|_{\alpha })\,\|\X^{\tddeuxa{$a$}{$b$}}\ \,|_{2\alpha}+T^{2\alpha}\|R\Y^{\etree}\|_{3\alpha}\Bigg)\|R\Y^{\etree}\|_{3\alpha}\nonumber\\
&\ \hspace{7cm} (\text{by  ~(\ref{eq:iequ3}), (\ref{eq:iequ2}) and (\ref{eq:iequ4})}) \nonumber\\
&\ +\dfrac{1}{6}\|F'''\|_{\infty} \Bigg(\sum_{a\in A}\Big(|\Y_{0}^{\bullet_a}|+T^{\alpha}\sum_{b\in A}\big((|\Y_{0}^{\bullet_a\bullet_b}|+T^{\alpha }\| \Y^{\bullet_a\bullet_b}\|_{\alpha })+(|\Y_{0}^{\tddeuxa{$a$}{$b$}}\ \,|+T^{\alpha }\| \Y^{\tddeuxa{$a$}{$b$}}\ \,\|_{\alpha })\big)\nonumber\\
&\ \hspace{0.47cm}\|\X^{\bullet_a}\|_{\alpha }+T^{2\alpha }\|R\Y^{\bullet_a}\|_{2\alpha }\Big)\|\X^{\bullet_b}\|_{\alpha}+T^{\alpha}\sum_{a,b\in A}(|\Y_{0}^{\bullet_a\bullet_b}|+T^{\alpha }\| \Y^{\bullet_a\bullet_b}\|_{\alpha })\,\|\X ^{\bullet_a\bullet_b}\|_{2\alpha}\nonumber\\
&\ +T^{\alpha}\sum_{a,b\in A}(|\Y_{0}^{\tddeuxa{$a$}{$b$}}\ \,|+T^{\alpha }\| \Y^{\tddeuxa{$a$}{$b$}}\ \,\|_{\alpha })\,\|\X^{\tddeuxa{$a$}{$b$}}\ \,|_{2\alpha}+T^{2\alpha}\|R\Y^{\etree}\|_{3\alpha}\Bigg)^3\mlabel{eq:simp3}\\
&\ \hspace{7cm} (\text{by  ~(\ref{eq:iequ3}), (\ref{eq:iequ2}) and (\ref{eq:iequ4})})\nonumber\\
\le&\  3\|F\|_{C_b^3}\Big(1+T^{\alpha}+T^{2\alpha}+T^{3\alpha} \Big)^3\Big(1+\sum_{a\in A}|\Y_{0}^{\bullet_a}|+\sum_{a,b\in A}|\Y_{0}^{\bullet_a\bullet_b}|+\sum_{a,b\in A}|\Y_{0}^{\tddeuxa{$a$}{$b$}}\ \,| \Big)^3\nonumber\\
&\ \Big(1+\|R\Y^{\etree}\|_{3\alpha}+\sum_{a\in A}\|R\Y^{\bullet_a}\|_{2\alpha}+\sum_{a,b\in A}\|\Y^{\bullet_a\bullet_b}\|_{\alpha}+\sum_{a,b\in A}\|\Y^{\tddeuxa{$a$}{$b$}}\ \,\|_{\alpha} \Big)^3\nonumber\\
&\ \Big(1+\sum_{a\in A}\|\X^{\bullet_a}\|_{\alpha}+\sum_{a,b\in A}\|\X^{\bullet_a\bullet_b}\|_{2\alpha}+\sum_{a,b\in A}\|\X^{\tddeuxa{$a$}{$b$}}\ \,\|_{2\alpha}+\sum_{a,b,c\in A}\|\X^{\bullet_a\bullet_b\bullet_c}\|_{3\alpha}+\sum_{a,b,c\in A}\|\X^{\bullet_a\tddeuxa{$b$}{$c$}}\ \,\|_{3\alpha} \Big)^3 \nonumber\\
&\ \hspace{5cm} (\text{each of the items above appears below})\nonumber\\
=&\  3\|F\|_{C_b^3}\Big(1+T^{\alpha}+T^{2\alpha}+T^{3\alpha} \Big)^3\Big(1+\sum_{a\in A}|\Y_{0}^{\bullet_a}|+\sum_{a,b\in A}|\Y_{0}^{\bullet_a\bullet_b}|+\sum_{a,b\in A}|\Y_{0}^{\tddeuxa{$a$}{$b$}}\ \,| \Big)^3 \Big(1+\fan{\Y}_{\X;\alpha}\Big)^3 \Big(1+\fan{\X}_{\alpha}\Big)^3\nonumber\\
&\ \hspace{8cm} (\text{by  ~(\ref{eq:distx}) \text{ and } (\ref{eq:normy})})\nonumber\\
=&:\  \|F\|_{C_{b}^{3}}M(T, \sum_{a\in A}|\Y_{0}^{\bullet_a}|, \sum_{a,b\in A}|\Y_{0}^{\bullet_a\bullet_b}|, \sum_{b,c\in A}| \Y_{0}^{\tddeuxa{$a$}{$b$}}\ \,|, \fan{\Y} _{\X;\alpha }, \fan{\X} _{\alpha }).\nonumber
\end{align}
Consequently,
\begin{equation*}
\|R\Z^{\etree}\|_{3\alpha} = \sup_{s\ne t\in [ 0,T ] } \dfrac{|R\Z_{s,t}^{\etree}|}{|t-s|^{3\alpha}}\le \|F\|_{C_{b}^{3}}M(T, \sum_{a\in A}|\Y_{0}^{\bullet_a}|, \sum_{a,b\in A}|\Y_{0}^{\bullet_a\bullet_b}|, \sum_{a,b\in A}| \Y_{0}^{\tddeuxa{$a$}{$b$}}\ \,|, \fan{\Y} _{\X;\alpha }, \fan{\X} _{\alpha }).
\end{equation*}

\noindent{\bf Step 2:}
For the second term $\|R\Z^{\bullet_a}\|_{2\alpha}$, we obtain
\begin{align}
&\ |R\Z_{s,t}^{\bullet_a}|\nonumber\\
=&\ |\Z_{s,t}^{\bullet_a}-\sum_{b\in A}(\Z_{s}^{\bullet_a\bullet_b}+\Z_{s}^{\tddeuxa{$a$}{$b$}}\ \,)\X_{s,t}^{\bullet_b}| \hspace{3cm}(\text{by  ~(\ref{eq:cbrp2})})\nonumber\\
=&\ \Big|\Big(F'(\Y^{\etree}_{t})\Y^{\bullet_{a}}_{t}-F'(\Y^{\etree}_{s})\Y^{\bullet_{a}}_{s}\Big)-\sum_{b\in A}(\Z_{s}^{\bullet_a\bullet_b}+\Z_{s}^{\tddeuxa{$a$}{$b$}}\ \,)\X_{s,t}^{\bullet_b}\Big| \hspace{3cm}(\text{by ~(\ref{eq:zcbrp1})})\nonumber\\
=&\ \Big|\Big(F'(\Y^{\etree}_{t})-F'(\Y^{\etree}_{s})\Big)\Y^{\bullet_{a}}_{s}+F'(\Y^{\etree}_{t})\Y^{\bullet_{a}}_{s,t}-\sum_{b\in A}(\Z_{s}^{\bullet_a\bullet_b}+\Z_{s}^{\tddeuxa{$a$}{$b$}}\ \,)\X_{s,t}^{\bullet_b}\Big|
\hspace{1cm}(\text{by combining like terms})\nonumber\\
=&\ \Big|\Big(F'(\Y^{\etree}_{t})-F'(\Y^{\etree}_{s})\Big)\Y^{\bullet_{a}}_{s}+\Big(F'(\Y^{\etree}_{t})-F'(\Y^{\etree}_{s})\Big)\Y^{\bullet_{a}}_{s,t}+F'(\Y^{\etree}_{s})\Y^{\bullet_{a}}_{s,t}-\sum_{b\in A}(\Z_{s}^{\bullet_a\bullet_b}+\Z_{s}^{\tddeuxa{$a$}{$b$}}\ \,)\X_{s,t}^{\bullet_b}\Big|\nonumber\\
=&\ \Big|\Big(F''(\Y^{\etree}_{s})\Y^{\etree}_{s,t}+\R{$\Int_0^1$}(1-\theta)F'''(\Y_{s}^{\etree}+\theta\Y_{s,t}^{\etree})(\Y_{s,t}^{\etree})^2d\theta\Big)\Y^{\bullet_{a}}_{s}\nonumber\\
&\ +\Big(F''(\Y^{\etree}_{s})\Y^{\etree}_{s,t}+\R{$\Int_0^1$}(1-\theta)F'''(\Y_{s}^{\etree}+\theta\Y_{s,t}^{\etree})(\Y_{s,t}^{\etree})^2d\theta\Big)\Y^{\bullet_{a}}_{s,t}+F'(\Y^{\etree}_{s})\Big(\sum_{b\in A}(\Y_{s}^{\bullet_a\bullet_b}+\Y_{s}^{\tddeuxa{$a$}{$b$}}\ \,)\X_{s,t}^{\bullet_b}+R\Y_{s,t}^{\bullet_a}\Big)\nonumber\\
&\ -\sum_{b\in A}\Big(\big(F'(\Y^{\etree}_{s})\Y^{\bullet_{a}\bullet_{b}}_{s}
+F''(\Y^{\etree}_{s})\Y^{\bullet_{a}}_{s}\Y^{\bullet_{b}}_{s}\big)+F'(\Y^{\etree}_{s})\Y_{s}^{\tddeuxa{$a$}{$b$}}\ \,\Big)\X_{s,t}^{\bullet_b}\Big|\nonumber\\
&\ \hspace{3cm} (\text{by the Taylor expression of $F'$ at $\Y_s^\etree$ and  ~(\ref{eq:cbrp2}) and (\ref{eq:zcbrp1})})\nonumber\\
=&\ \Big|\Big(F''(\Y^{\etree}_{s})\Y^{\etree}_{s,t}+\R{$\Int_0^1$}(1-\theta)F'''(\Y_{s}^{\etree}+\theta\Y_{s,t}^{\etree})(\Y_{s,t}^{\etree})^2d\theta\Big)\Y^{\bullet_{a}}_{s}\nonumber\\
&\ +\Big(F''(\Y^{\etree}_{s})\Y^{\etree}_{s,t}+\R{$\Int_0^1$}(1-\theta)F'''(\Y_{s}^{\etree}+\theta\Y_{s,t}^{\etree})(\Y_{s,t}^{\etree})^2d\theta\Big)\Y^{\bullet_{a}}_{s,t}+F'(\Y^{\etree}_{s})R\Y_{s,t}^{\bullet_a}-\sum_{b\in A}F''(\Y^{\etree}_{s})\Y^{\bullet_a}_{s}\Y^{\bullet_{b}}_{s}\X_{s,t}^{\bullet_b}\Big|\nonumber\\
&\ \hspace{7cm} (\text{by combining like terms})\nonumber\\
=&\ \Big|F''(\Y^{\etree}_{s})\Big(\sum_{a\in A}\Y_{s} ^{\bullet_a}\X_{s,t} ^{\bullet_a}+\sum_{a,b\in A}\Y_{s} ^{\bullet_a\bullet_b} \X_{s,t} ^{\bullet_a\bullet_b}+\sum_{a,b\in A}\Y_{s} ^{\tddeuxa{$a$}{$b$}}\ \, \X_{s,t}^{\tddeuxa{$a$}{$b$}}\ \,+R\Y^{\etree}_{s,t}\Big)\Y^{\bullet_{a}}_{s}\nonumber\\
&\ +\Y^{\bullet_{a}}_{s}\R{$\Int_0^1$}(1-\theta)F'''(\Y_{s}^{\etree}+\theta\Y_{s,t}^{\etree})(\Y_{s,t}^{\etree})^2d\theta+\Big(F''(\Y^{\etree}_{s})\Y^{\etree}_{s,t}+\R{$\Int_0^1$}(1-\theta)F'''(\Y_{s}^{\etree}+\theta\Y_{s,t}^{\etree})(\Y_{s,t}^{\etree})^2d\theta\Big)\Y^{\bullet_{a}}_{s,t}\nonumber\\
&\ +F'(\Y^{\etree}_{s})R\Y_{s,t}^{\bullet_a}-\sum_{b\in A}F''(\Y^{\etree}_{s})\Y^{\bullet_a}_{s}\Y^{\bullet_{b}}_{s}\X_{s,t}^{\bullet_b}
\Big|\hspace{2.5cm} (\text{by  ~(\ref{eq:cbrp1})})\nonumber\\
=&\ \Big|F''(\Y^{\etree}_{s})\Big(\sum_{a\in A}\Y_{s} ^{\bullet_a}\X_{s,t} ^{\bullet_a}+\sum_{a,b\in A}\Y_{s} ^{\bullet_a\bullet_b}\X_{s,t} ^{\bullet_a\bullet_b}+\sum_{a,b\in A}\Y_{s} ^{\tddeuxa{$a$}{$b$}}\ \, \X_{s,t}^{\tddeuxa{$a$}{$b$}}\ \,+R\Y^{\etree}_{s,t}\Big)\Y^{\bullet_{a}}_{s}\nonumber\\
&\ +\Y^{\bullet_{a}}_{s}\R{$\Int_0^1$}(1-\theta)F'''(\Y_{s}^{\etree}+\theta\Y_{s,t}^{\etree})(\Y_{s,t}^{\etree})^2d\theta\nonumber\\
&\ +\Big(F''(\Y^{\etree}_{s})\Y^{\etree}_{s,t}+
\R{$\Int_0^1$}(1-\theta)F'''(\Y_{s}^{\etree}+
\theta\Y_{s,t}^{\etree})(\Y_{s,t}^{\etree})^2d\theta\Big)\Y^{\bullet_{a}}_{s,t}
+F'(\Y^{\etree}_{s})R\Y_{s,t}^{\bullet_a}\Big| \mlabel{eq:simp2}\\
&\ \hspace{5cm} (\text{by combining like terms})\nonumber\\
\le&\  \|F''\|_{\infty}\Big(\sum_{a\in A}|\Y_{s} ^{\bullet_a}|\,|\X_{s,t} ^{\bullet_a}|+\sum_{a,b\in A}|\Y_{s} ^{\bullet_a\bullet_b}|\,|\X_{s,t} ^{\bullet_a\bullet_b}|+\sum_{a,b\in A}|\Y_{s} ^{\tddeuxa{$a$}{$b$}}\ \,|\, |\X_{s,t}^{\tddeuxa{$a$}{$b$}}\ \,|+|R\Y^{\etree}_{s,t}|\Big)|\Y^{\bullet_{a}}_{s}|\nonumber\\
&\ +\dfrac{1}{2}|\Y^{\bullet_{a}}_{s}|\,\|F'''\|_{\infty}|\Y_{s,t}^{\etree}|^2
+\|F''\|_{\infty}|\Y_{s,t}^{\etree}|\,|\Y_{s,t}^{\bullet_a}|
+\dfrac{1}{2}\|F'''\|_{\infty}|\Y_{s,t}^{\etree}|^2|\Y^{\bullet_{a}}_{s,t}|+ \|F'\|_{\infty}|\,|R\Y^{\bullet_a}_{s,t}|.\nonumber\\
& \hspace{5cm} (\text{by the triangle inequality,  ~(\ref{eq:infF}) and} \displaystyle\int_0^1 (1-\theta ) d\theta=\frac{1}{2})\nonumber
\end{align}
With a similar argument to  ~(\ref{eq:simp3}),
\begin{equation*}
\|R\Z^{\bullet_a}\|_{2\alpha}= \sup_{s\ne t\in [ 0,T ] } \dfrac{|R\Z_{s,t}^{\bullet_a}|}{|t-s|^{2\alpha}}\le \|F\|_{C_{b}^{3}}M(T, \sum_{a\in A}|\Y_{0}^{\bullet_a}|, \sum_{a,b\in A}|\Y_{0}^{\bullet_a\bullet_b}|, \sum_{a,b\in A}| \Y_{0}^{\tddeuxa{$a$}{$b$}}\ \,|, \fan{\Y} _{\X;\alpha }, \fan{\X} _{\alpha }).
\end{equation*}
Since $a\in A$ and $|A|=d$, we obtain
\begin{equation*}
\sum_{a\in A}\|R\Z^{\bullet_a}\|_{2\alpha }\le d\, \|F\|_{C_{b}^{3}}M(T, \sum_{a\in A}|\Y_{0}^{\bullet_a}|, \sum_{a,b\in A}|\Y_{0}^{\bullet_a\bullet_b}|, \sum_{a,b\in A}| \Y_{0}^{\tddeuxa{$a$}{$b$}}\ \,|, \fan{\Y} _{\X;\alpha }, \fan{\X} _{\alpha }).
\end{equation*}

\noindent{\bf Step 3:} For the third term $\|R\Z^{\tddeuxa{$a$}{$b$}}\ \,\|_{\alpha}$,
we get
\begin{align*}
&\ |R\Z_{s,t}^{\tddeuxa{$a$}{$b$}}\ \,| \\
=&\ | \Z_{s,t}^{\tddeuxa{$a$}{$b$}}\ \,| \hspace{6.4cm}(\text{by  ~(\ref{eq:cbrp4})})\\
=&\ |F'(\Y_{t}^{\etree})\Y_{t}^{\tddeuxa{$a$}{$b$}}\ \,-F'(\Y_{s}^{\etree})\Y_{s}^{\tddeuxa{$a$}{$b$}}\ \,| \hspace{3cm}(\text{by ~(\ref{eq:zcbrp1})})\\
=&\ |(F'(\Y_{t}^{\etree})-F'(\Y_{s}^{\etree}))\Y_{t}^{\tddeuxa{$a$}{$b$}}\ \,+F'(\Y_{s}^{\etree})\Y_{s,t}^{\tddeuxa{$a$}{$b$}}\ \,| \\
\le&\ \|F''\|_{\infty}|\Y_{s,t}^{\etree}|(|\Y_{0}^{\tddeuxa{$a$}{$b$}}\ \,|+T^{\alpha}\|\Y^{\tddeuxa{$a$}{$b$}}\ \,\|_{\alpha})+\|F'\|_{\infty}|\Y_{s,t}^{\tddeuxa{$a$}{$b$}}\ \,| \\
&\ \hspace{3cm} (\text{by the differential mean value theorem of $F'$ and~(\ref{eq:infF}), (\ref{eq:iequ3})})\\
=&\ \|F''\|_{\infty}\Big|\sum_{a\in A}\Y_{s} ^{\bullet_a}\X_{s,t} ^{\bullet_a}+\sum_{a,b\in A}\Y_{s} ^{\bullet_a\bullet_b} \X_{s,t} ^{\bullet_a\bullet_b} +
\sum_{a,b\in A}\Y_{s} ^{\tddeuxa{$a$}{$b$}}\ \, \X_{s,t} ^{\tddeuxa{$a$}{$b$}}\ \,+R\Y^{\etree}_{s,t}\Big|(|\Y_{0}^{\tddeuxa{$a$}{$b$}}\ \,|+T^{\alpha}\|\Y^{\tddeuxa{$a$}{$b$}}\ \,\|_{\alpha})+\|F'\|_{\infty}|\Y_{s,t}^{\tddeuxa{$a$}{$b$}}\ \,|\\
&\ \hspace{9cm}(\text{by  ~(\ref{eq:cbrp1})})\\
\le&\ \|F''\|_{\infty}\Big(\sum_{a\in A}|\Y_{s} ^{\bullet_a}|\,|\X_{s,t}^{\bullet_a}|+\sum_{a,b\in A}|\Y_{s} ^{\bullet_a\bullet_b}|\,|\X_{s,t} ^{\bullet_a\bullet_b}| +
\sum_{a,b\in A}|\Y_{s}^{\tddeuxa{$a$}{$b$}}\ \, |\,|\X_{s,t} ^{\tddeuxa{$a$}{$b$}}\ \,|+|R\Y^{\etree}_{s,t}|\Big)(|\Y_{0}^{\tddeuxa{$a$}{$b$}}\ \,|+T^{\alpha}\|\Y^{\tddeuxa{$a$}{$b$}}\ \,\|_{\alpha})\\
&\ +\|F'\|_{\infty}|\Y_{s,t}^{\tddeuxa{$a$}{$b$}}\ \,|\hspace{6cm}(\text{by the triangle inequality})\\
\le&\ \|F''\|_{\infty}\Bigg(\sum_{a\in A}\Big(|\Y_{0}^{\bullet_a}|+T^{\alpha}\sum_{b\in A}\Big((|\Y_{0}^{\bullet_a\bullet_b}|+T^{\alpha }\| \Y^{\bullet_a\bullet_b}\|_{\alpha })+(|\Y_{0}^{\tddeuxa{$a$}{$b$}}\ \,|+T^{\alpha }\| \Y^{\tddeuxa{$a$}{$b$}}\ \,\|_{\alpha })\Big)\| \X^{\bullet_b} \|_{\alpha }+T^{2\alpha }\|R\Y^{\bullet_e}\|_{2\alpha }\Big)\,|\X_{s,t}^{\bullet_a}|\\
&\ +\sum_{a,b\in A}(|\Y_{0}^{\bullet_a\bullet_b}|+T^{\alpha }\| \Y^{\bullet_a\bullet_b}\|_{\alpha })\,|\X_{s,t} ^{\bullet_a\bullet_b}| +
\sum_{a,b\in A}(|\Y_{0}^{\tddeuxa{$a$}{$b$}}\ \,|+T^{\alpha }\| \Y^{\tddeuxa{$a$}{$b$}}\ \,\|_{\alpha })\,|\X_{s,t} ^{\tddeuxa{$a$}{$b$}}\ \,|+|R\Y^{\etree}_{s,t}|\Bigg)(|\Y_{0}^{\tddeuxa{$a$}{$b$}}\ \,|+T^{\alpha}\|\Y^{\tddeuxa{$a$}{$b$}}\ \,\|_{\alpha})\\
&\ +\|F'\|_{\infty}|\Y_{s,t}^{\tddeuxa{$a$}{$b$}}\ \,|, \hspace{0.5cm}(\text{by  ~(\ref{eq:iequ3}), (\ref{eq:iequ2}) and (\ref{eq:iequ4})})
\end{align*}
which, together with  ~ ~(\ref{eq:normy}) and (\ref{eq:fcbn}), implies
\begin{equation*}
\sum_{a,b\in A}\|R\Z^{\tddeuxa{$a$}{$b$}}\ \,\|_{\alpha }=\sum_{a,b\in A}\sup_{s\ne t\in [ 0,T ] } \dfrac{|R\Z_{s,t}^{\tddeuxa{$a$}{$b$}}\ \,|}{|t-s|^{\alpha}} \le d^2\|F\|_{C_{b}^{3}}M(T, \sum_{a\in A}|\Y_{0}^{\bullet_a}|, \sum_{a,b\in A}|\Y_{0}^{\bullet_a\bullet_b}|, \sum_{a,b\in A}| \Y_{0}^{\tddeuxa{$a$}{$b$}}\ \,|, \fan{\Y} _{\X;\alpha }, \fan{\X} _{\alpha }).
\end{equation*}

\noindent{\bf Step 4:} For the fourth term $\|R\Z^{\bullet_a\bullet_b}\|_{\alpha}$,
we have
\begin{equation}
\begin{aligned}
|R\Z_{s,t}^{\bullet_a\bullet_b}|&=|\Z_{s,t}^{\bullet_a\bullet_b}| \hspace{3cm}(\text{by  ~(\ref{eq:cbrp3})})\\
&=|(F'(\Y_{t}^{\etree})\Y_{t}^{\bullet_a\bullet_b}+F''(\Y_{t}^{\etree})\Y_{t}^{\bullet_a}\Y_{t}^{\bullet_b})-(F'(\Y_{s}^{\etree})\Y_{s}^{\bullet_a\bullet_b}+F''(\Y_{s}^{\etree})\Y_{s}^{\bullet_a}\Y_{s}^{\bullet_b})| \\
&\hspace{8cm}(\text{by  ~(\ref{eq:zcbrp1})})\\
&\le|F'(\Y_{t}^{\etree})\Y_{t}^{\bullet_a\bullet_b}-F'(\Y_{s}^{\etree})\Y_{s}^{\bullet_a\bullet_b}|+|F''(\Y_{t}^{\etree})\Y_{t}^{\bullet_a}\Y_{t}^{\bullet_b}-F''(\Y_{s}^{\etree})\Y_{s}^{\bullet_a}\Y_{s}^{\bullet_b}|.
\end{aligned}
\mlabel{eq:rz4}
\end{equation}
For simplicity, denote by
$$I_1:=|F'(\Y_{t}^{\etree})\Y_{t}^{\bullet_a\bullet_b}-F'(\Y_{s}^{\etree})\Y_{s}^{\bullet_a\bullet_b}|\,\text{ and }\, I_2:=|F''(\Y_{t}^{\etree})\Y_{t}^{\bullet_a}\Y_{t}^{\bullet_b}-F''(\Y_{s}^{\etree})\Y_{s}^{\bullet_a}\Y_{s}^{\bullet_b}|.$$
We have
\begin{align*}
I_1 =&\ |(F'(\Y_{t}^{\etree})-F'(\Y_{s}^{\etree}))\Y_{t}^{\bullet_a\bullet_b}+F'( \Y_{s}^{\etree})\Y_{s,t}^{\bullet_a\bullet_b}| \\
\le&\ \|F''\|_{\infty}\Big(\sum_{a\in A}\Big(|\Y_{0}^{\bullet_a}|+T^{\alpha}\sum_{b\in A}\Big((|\Y_{0}^{\bullet_a\bullet_b}|+T^{\alpha }\| \Y^{\bullet_a\bullet_b}\|_{\alpha })\\
&+(|\Y_{0}^{\tddeuxa{$a$}{$b$}}\ \,|+T^{\alpha }\| \Y^{\tddeuxa{$a$}{$b$}}\ \,\|_{\alpha })\Big)\| \X^{\bullet_b}\|_{\alpha }+T^{2\alpha }\|R\Y^{\bullet_a}\|_{2\alpha }\Big)\,|\X_{s,t}^{\bullet_a}|+\sum_{a,b\in A}(|\Y_{0}^{\bullet_a\bullet_b}|+T^{\alpha }\| \Y^{\bullet_a\bullet_b}\|_{\alpha })\,|\X_{s,t} ^{\bullet_a\bullet_b}|\\
&+\sum_{a,b\in A}(|\Y_{0}^{\tddeuxa{$a$}{$b$}}\ \,|+T^{\alpha}\|\Y^{\tddeuxa{$a$}{$b$}}\ \,\|_{\alpha})\,|\X_{s,t}^{\tddeuxa{$a$}{$b$}}\ \,|+|R\Y^{\etree}_{s,t}|\Big)(|\Y_{0}^{\bullet_a\bullet_b}|+T^{\alpha}\|\Y^{\bullet_a\bullet_b}\|_{\alpha})\\
&\ +\|F'\|_{\infty}|\Y_{s,t}^{\bullet_a\bullet_b}|. \hspace{2cm} (\text{Similar to the process in Step 3 from the third equation})
\end{align*}
Moreover,
\begin{align*}
I_2
=&\ |(F''(\Y_{t}^{\etree})-F''(\Y_{s}^{\etree}))\Y_{t}^{\bullet_a}\Y_{t}^{\bullet_b}+F''(\Y_{s}^{\etree})(\Y_{t}^{\bullet_a}\Y_{t}^{\bullet_b}-\Y_{s}^{\bullet_a}\Y_{s}^{\bullet_b})|\\
=&\ |(F''(\Y_{t}^{\etree})-F''(\Y_{s}^{\etree}))\Y_{t}^{\bullet_a}\Y_{t}^{\bullet_b}+F''(\Y_{s}^{\etree})(\Y_{t}^{\bullet_a}\Y_{s,t}^{\bullet_b}+\Y_{s,t}^{\bullet_a}\Y_{s}^{\bullet_b})|\\
\le&\ \|F'''\|_{\infty}|\Y_{s,t}^{\etree}|\,|\Y_{t}^{\bullet_a}|\,|\Y_{t}^{\bullet_b}|
+\|F''\|_{\infty}(|\Y_{t}^{\bullet_a}|\,|\Y_{s,t}^{\bullet_b}|+|\Y_{s,t}^{\bullet_a}|\,|\Y_{s}^{\bullet_b}|)\\
&\   \hspace{1.5cm} (\text{by the differential mean value theorem of $F''$,   ~(\ref{eq:infF}) and the triangle inequality}) \\
=&\ \|F'''\|_{\infty}|\Y_{s,t}^{\etree}|\,|\Y_{t}^{\bullet_a}|\,|\Y_{t}^{\bullet_b}|+\|F''\|_{\infty}|\Y_{t}^{\bullet_a}|\,|\Y_{s,t}^{\bullet_b}|+\|F''\|_{\infty}|\Y_{s,t}^{\bullet_a}|\,|\Y_{s}^{\bullet_b}|\\
\le&\ \|F'''\|_{\infty }\Big|\sum_{a\in A}\Y_{s} ^{\bullet_a}\X_{s,t} ^{\bullet_a}+\sum_{a,b\in A}\Y_{s} ^{\bullet_a\bullet_b} \X_{s,t} ^{\bullet_a\bullet_b} +
\sum_{a,b\in A}\Y_{s} ^{\tddeuxa{$a$}{$b$}}\ \, \X_{s,t} ^{\tddeuxa{$a$}{$b$}}\ \,+R\Y^{\etree}_{s,t}\Big|\\
&\ \Bigg(|\Y_{0}^{\bullet_a}|+T^{\alpha}\sum_{b\in A}\Big((|\Y_{0}^{\bullet_a\bullet_b}|+T^{\alpha }\| \Y^{\bullet_a\bullet_b}\|_{\alpha })+(|\Y_{0}^{\tddeuxa{$a$}{$b$}}\ \,|+T^{\alpha }\| \Y^{\tddeuxa{$a$}{$b$}}\ \,\|_{\alpha })\Big)\| \X^{\bullet_a} \|_{\alpha }+T^{2\alpha }\|R\Y^{\bullet_a}\|_{2\alpha}\Bigg)\\
&\ \Bigg(|\Y_{0}^{\bullet_b}|+T^{\alpha}\sum_{c\in A}\Big((|\Y_{0}^{\bullet_b\bullet_c}|+T^{\alpha }\| \Y^{\bullet_b\bullet_c}\|_{\alpha })+(|\Y_{0}^{\tddeuxa{$b$}{$c$}}\ \,|+T^{\alpha }\| \Y^{\tddeuxa{$b$}{$c$}}\ \,\|_{\alpha })\Big)\| \X^{\bullet_b} \|_{\alpha }+T^{2\alpha }\|R\Y^{\bullet_b}\|_{2\alpha}\Bigg)\\
&\ \hspace{8cm} (\text{by  ~(\ref{eq:cbrp1}) and (\ref{eq:iequ4})})\\
&\ +\|F''\|_{\infty}\Bigg(|\Y_{0}^{\bullet_a}|+T^{\alpha}\sum_{b\in A}\Big((|\Y_{0}^{\bullet_a\bullet_b}|+T^{\alpha }\| \Y^{\bullet_a\bullet_b}\|_{\alpha })+(|\Y_{0}^{\tddeuxa{$a$}{$b$}}\ \,|+T^{\alpha }\| \Y^{\tddeuxa{$a$}{$b$}}\ \,\|_{\alpha })\Big)\| \X^{\bullet_a} \|_{\alpha }+T^{2\alpha }\|R\Y^{\bullet_a}\|_{2\alpha }\Bigg)\\
&\ \hspace{0.47cm}\Big|\sum_{c\in A}(\Y_{s}^{\bullet_b\bullet_c}+\Y_{s}^{\tddeuxa{$b$}{$c$}}\ \,)\X_{s,t}^{\bullet_c}+R\Y_{s,t}^{\bullet_b}\Big| \hspace{1cm}(\text{by  ~(\ref{eq:cbrp2}) and (\ref{eq:iequ4})})\\
&\ +\|F''\|_{\infty}\Big|\sum_{b\in A}(\Y_{s}^{\bullet_a\bullet_b}+\Y_{s}^{\tddeuxa{$a$}{$b$}}\ \,)\X_{s,t}^{\bullet_b}+R\Y_{s,t}^{\bullet_a}\Big| \Bigg(|\Y_{0}^{\bullet_b}|+T^{\alpha}\sum_{c\in A}\Big((|\Y_{0}^{\bullet_b\bullet_c}|+T^{\alpha }\| \Y^{\bullet_b\bullet_c}\|_{\alpha })+(|\Y_{0}^{\tddeuxa{$b$}{$c$}}\ \,|\\
&\ \hspace{0.47cm}+T^{\alpha}\|\Y^{\tddeuxa{$b$}{$c$}}\ \,\|_{\alpha })\Big)\| \X^{\bullet_b} \|_{\alpha }+T^{2\alpha }\|R\Y^{\bullet_b}\|_{2\alpha }\Bigg).
\hspace{1cm}(\text{by  ~(\ref{eq:cbrp2}) and (\ref{eq:iequ4})})
\end{align*}
Consequently,
\begin{align*}
\sum_{a,b\in A}\| R\Z^{\bullet_a\bullet_b}\|_{\alpha } =&\ \sum_{a,b\in A}\sup_{s\ne t\in [ 0,T ] } \dfrac{|R\Z_{s,t}^{\bullet_a\bullet_b}|}{|t-s|^{\alpha}}\le \sum_{a,b\in A}\sup_{s\ne t\in [0,T]} \dfrac{I_1}{|t-s|^{\alpha}}+\sum_{a,b\in A}\sup_{s\ne t\in [ 0,T ] } \dfrac{I_2}{|t-s|^{\alpha}}\\
\le&\ d^2\|F\|_{C_{b}^{3}}M(T, \sum_{a\in A}|\Y_{0}^{\bullet_a}|, \sum_{a,b\in A}|\Y_{0}^{\bullet_a\bullet_b}|, \sum_{a,b\in A}| \Y_{0}^{\tddeuxa{$a$}{$b$}}\ \,|, \fan{\Y} _{\X;\alpha }, \fan{\X} _{\alpha }).
\end{align*}
Here the last step employs  ~(\ref{eq:distx}), (\ref{eq:normy}) and substituting the above bounds about $I_1$ and $I_2$ into ~(\ref{eq:rz4}).

Summing up the above estimates of the four steps, we conclude
\begin{align*}
\fan{\Z}_{\X;\alpha }=&\ \|R\Z^{\etree }\|_{3\alpha}+\sum_{a\in A}\|R\Z^{\bullet_a}\|_{2\alpha }+\sum_{a,b\in A}\|R\Z^{\tddeuxa{$a$}{$b$}}\ \,\|_{\alpha }+\sum_{a,b\in A}\| R\Z^{\bullet_a\bullet_b}\|_{\alpha}\\
\le &\  4d^2\|F\|_{C_{b}^{3}}M(T, \sum_{a\in A}|\Y_{0}^{\bullet_a}|, \sum_{a,b\in A}|\Y_{0}^{\bullet_a\bullet_b}|, \sum_{a,b\in A}| \Y_{0}^{\tddeuxa{$a$}{$b$}}\ \,|, \fan{\Y} _{\X;\alpha }, \fan{\X} _{\alpha }). \qedhere
\end{align*}
\end{proof}
\subsection{Stability of the composition with regular functions}\label{sec2.3}
This subsection is dedicated to establishing the stability of controlled planarly branched rough paths, as detailed below.

\begin{theorem}
Let \X, $\tilde{\X} \in \brpt$, $\Y\in \cbrpxt$, $\tilde{\Y}\in \cbrptxt$ and $F\in C_{b}^{3}(\RR^n; \RR^n) $. Let $\Z=F(\Y) $ and $\tilde{\Z}=F(\tilde{\Y})$ obtained in  ~(\mref{eq:zcbrp1}).
Then
\begin{align*}
\fan {\Z,\tilde{\Z}}_{\X,\tilde{\X};\alpha} \le 4d^2\|F\|_{C_{b}^{3}} M\Big( & T, \sum_{a\in A}|\Y_{0}^{\bullet_a}|, \sum_{a\in A}|\tilde\Y_{0}^{\bullet_a}|, \sum_{a,b\in A}|\Y_{0}^{\bullet_a\bullet_b}|, \sum_{a,b\in A}|\tilde\Y_{0}^{\bullet_a\bullet_b}|, \sum_{a,b\in A}| \Y_{0}^{\tddeuxa{$a$}{$b$}}\ \,|, \sum_{a,b\in A}|\tilde\Y_{0}^{\tddeuxa{$a$}{$b$}}\ \,|, \\
&\ \hspace{-1cm}\fan{\Y} _{\X;\alpha }, \fan{\tilde\Y} _{\X;\alpha},\fan{\X} _{\alpha}, \fan{\tilde\X}_{\alpha}\Big)\times \Big(\fan{\Y,\tilde{\Y} }_{\X,\tilde{\X};\alpha  }+\fan{\X,\tilde{\X} }_{\alpha}+|\Y_0^\etree -\tilde{\Y}^\etree_0|\Big ).
\end{align*}
\mlabel{thm:stab2}
\end{theorem}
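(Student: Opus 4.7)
The plan is to follow the same four-step breakdown as in the proof of Theorem~\mref{thm:stab1}, but to estimate the \emph{differences} $R\Z^\tau - R\tilde\Z^\tau$ rather than just $R\Z^\tau$, for each $\tau\in \calF^{\leq 2}$. By~(\mref{eq:dist}) the quantity $\fan{\Z,\tilde\Z}_{\X,\tilde\X;\alpha}$ is the sum of $\|R\Z^\etree - R\tilde\Z^\etree\|_{3\alpha}$, $\sum_{a\in A}\|R\Z^{\bullet_a} - R\tilde\Z^{\bullet_a}\|_{2\alpha}$, $\sum_{a,b\in A}\|R\Z^{\tddeuxa{$a$}{$b$}}\, - R\tilde\Z^{\tddeuxa{$a$}{$b$}}\,\|_\alpha$, and $\sum_{a,b\in A}\|R\Z^{\bullet_a\bullet_b} - R\tilde\Z^{\bullet_a\bullet_b}\|_\alpha$, so I would estimate each piece separately and then sum them.

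The core technical device is a systematic add-and-subtract scheme combined with two elementary estimates: the product rule $ab - \tilde a\tilde b = (a-\tilde a)b + \tilde a(b-\tilde b)$ and the mean-value bound $|F^{(k)}(\Y_s^\etree) - F^{(k)}(\tilde\Y_s^\etree)| \leq \|F^{(k+1)}\|_\infty\,|\Y_s^\etree-\tilde\Y_s^\etree|$ for $k=0,1,2$. To convert the pointwise difference $|\Y_s^\etree-\tilde\Y_s^\etree|$ into the quantities appearing on the right-hand side of the claim, I would write $\Y_s^\etree-\tilde\Y_s^\etree = (\Y_0^\etree-\tilde\Y_0^\etree) + (\Y_{0,s}^\etree - \tilde\Y_{0,s}^\etree)$; the first summand contributes $|\Y_0^\etree-\tilde\Y_0^\etree|$ directly, while the second is expanded via~(\mref{eq:cbrp1}) applied simultaneously to $\Y$ and $\tilde\Y$ and bounded by $T^\alpha$ times a combination of $\fan{\X,\tilde\X}_\alpha$ and $\fan{\Y,\tilde\Y}_{\X,\tilde\X;\alpha}$, with the background norms $\fan{\X}_\alpha$, $\fan{\tilde\X}_\alpha$, $\fan{\Y}_{\X;\alpha}$, $\fan{\tilde\Y}_{\tilde\X;\alpha}$ and the initial values absorbed into the universal function $M$. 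With these reductions, each step parallels the corresponding one in the proof of Theorem~\mref{thm:stab1}: one starts from the ``simplified'' identities~(\mref{eq:simp1}) and~(\mref{eq:simp2}) (and their analogues used in Steps~3 and~4), writes the same identity for $\tilde\Z$, subtracts, and repeatedly applies the product rule so that each resulting summand carries exactly one ``difference factor'' — either $\X^\rho - \tilde\X^\rho$, $R\Y^\sigma - R\tilde\Y^\sigma$, or $\Y_0^\etree - \tilde\Y_0^\etree$ — multiplied by absolutely bounded quantities. Dividing by the appropriate power of $|t-s|$ and taking the supremum then yields the bound for each $\|R\Z^\tau - R\tilde\Z^\tau\|_{(3-|\tau|)\alpha}$.

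The main obstacle is bookkeeping rather than conceptual: the Taylor expansion used in Step~1 for $R\Z^\etree$ contains a Lagrange remainder $\int_0^1 \tfrac{(1-\theta)^2}{2}F'''(\Y_s^\etree+\theta\Y_{s,t}^\etree)(\Y_{s,t}^\etree)^3\,d\theta$, whose difference with the corresponding remainder for $\tilde\Y$ must be handled via $u^3-v^3 = (u-v)(u^2+uv+v^2)$ together with a mean-value estimate on $F'''$ in the integrand, producing a nested product of differences and bounded factors that all feed into $M$; similarly, Step~4 for $R\Z^{\bullet_a\bullet_b} - R\tilde\Z^{\bullet_a\bullet_b}$ requires comparing the quadratic correction $F''(\Y_s^\etree)\Y_s^{\bullet_a}\Y_s^{\bullet_b}$ with its tilded counterpart, which generates three additive difference contributions that each need to be re-expressed via the remainder equations~(\mref{eq:cbrp2})--(\mref{eq:cbrp4}) for $\Y$ and $\tilde\Y$. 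Once every difference factor is matched with a contribution to $\fan{\Y,\tilde\Y}_{\X,\tilde\X;\alpha} + \fan{\X,\tilde\X}_\alpha + |\Y_0^\etree - \tilde\Y_0^\etree|$, and every remaining factor is bounded by norms absorbed in $M$, summing the four steps gives the claimed inequality with prefactor $4d^2\|F\|_{C_b^3}$, where the $4$ reflects the four families of trees in $\calF^{\leq 2}$ and the $d^2$ comes from the double sum $\sum_{a,b\in A}$ appearing in Steps~3 and~4.
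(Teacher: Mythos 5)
Your overall skeleton (the four-term decomposition of $\fan{\Z,\tilde\Z}_{\X,\tilde\X;\alpha}$ via \eqref{eq:dist}, starting from the identities \eqref{eq:simp1} and \eqref{eq:simp2} and subtracting the tilded versions) matches the paper, but your central bookkeeping claim — that after add-and-subtract every summand can be made to carry exactly one difference factor of the form $\X^{\rho}-\tilde{\X}^{\rho}$, $R\Y^{\sigma}-R\tilde{\Y}^{\sigma}$ or $\Y_0^{\etree}-\tilde{\Y}_0^{\etree}$ — does not close. Once you apply the product rule to terms such as $F''(\Y_s^{\etree})\Y_s^{\bullet_a}\Y_s^{\bullet_b\bullet_c}\X_{s,t}^{\bullet_a\bullet_b\bullet_c}$ minus its tilded analogue (Step 1), or to $F''(\Y_s^{\etree})\Y_s^{\bullet_a}\Y_s^{\bullet_b}$ versus $F''(\tilde\Y_s^{\etree})\tilde\Y_s^{\bullet_a}\tilde\Y_s^{\bullet_b}$ (Step 4), you must control pointwise differences $|\Y_s^{\tau}-\tilde\Y_s^{\tau}|$ for $|\tau|=0,1,2$. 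Writing $\Y_s^{\tau}-\tilde\Y_s^{\tau}=(\Y_0^{\tau}-\tilde\Y_0^{\tau})+(\Y_{0,s}^{\tau}-\tilde\Y_{0,s}^{\tau})$ and expanding the increment via \eqref{eq:cbrp1}--\eqref{eq:cbrp2} produces, besides terms carrying $\fan{\X,\tilde\X}_{\alpha}$ or $\fan{\Y,\tilde\Y}_{\X,\tilde\X;\alpha}$, cross terms such as $(\Y_0^{\bullet_b}-\tilde\Y_0^{\bullet_b})\X_{0,s}^{\bullet_b}$ and $(\Y_0^{\bullet_a\bullet_b}-\tilde\Y_0^{\bullet_a\bullet_b})\X_{0,s}^{\bullet_b}$, i.e.\ initial-value differences of the level-one and level-two components. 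These are not dominated by $\fan{\Y,\tilde\Y}_{\X,\tilde\X;\alpha}+\fan{\X,\tilde\X}_{\alpha}+|\Y_0^{\etree}-\tilde\Y_0^{\etree}|$ (the $R$-seminorms only see increments), and they cannot be ``absorbed into $M$'', since $M$ multiplies the difference factor. The obstruction is genuine: taking $\X=\tilde\X$, $\Y^{\bullet_a}\equiv v$, $\tilde\Y^{\bullet_a}\equiv\tilde v\neq v$, all other nonempty components zero, and $\Y_t^{\etree}=\Y_0^{\etree}+v\,\X_{0,t}^{\bullet_a}$, $\tilde\Y_t^{\etree}=\Y_0^{\etree}+\tilde v\,\X_{0,t}^{\bullet_a}$ gives two controlled paths with all remainders $R\Y^{\tau}=R\tilde\Y^{\tau}$ and equal $\Y_0^{\etree}$, yet $\Z=F(\Y)$ and $\tilde\Z=F(\tilde\Y)$ differ; so your scheme can only succeed if you either add $\sum_{\tau\in\calF^{\le 2}}|\Y_0^{\tau}-\tilde\Y_0^{\tau}|$ to the right-hand factor or assume $\Y_0^{\tau}=\tilde\Y_0^{\tau}$ for all $\tau$ (which is exactly the situation in which the theorem is later applied, in Lemma~\ref{lem:jstab2} and Theorem~\ref{thm:leau}).

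It is also worth noting that the paper does not attempt your full difference bookkeeping. In its Steps 1--4 it keeps a difference only in the terms $F'(\Y_s^{\etree})R\Y_{s,t}^{\tau}-F'(\tilde\Y_s^{\etree})R\tilde\Y_{s,t}^{\tau}$ (and there it bounds $|F'(\Y_s^{\etree})-F'(\tilde\Y_s^{\etree})|$ crudely by $2\|F'\|_{\infty}$ rather than by a mean-value estimate), while all remaining contributions of $\Y$ and $\tilde\Y$ are estimated separately by $M$-type quantities with no difference factor, yielding the affine bounds \eqref{eq:step1}--\eqref{eq:step4} of the form $d^2\|F\|_{C_b^3}M(\cdots)+d^2\|F\|_{C_b^3}\|R\Y^{\tau}-R\tilde\Y^{\tau}\|$; the multiplicative form of the statement is only imposed at the very end by ``adjusting $M$''. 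So your route is genuinely different and more ambitious (a true termwise Lipschitz estimate), but it stalls precisely where the missing initial-data differences enter, and as written the proposal does not prove the stated inequality.
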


\begin{proof}
Invoking  ~(\mref{eq:dist}),
\begin{equation}
\fan {\Z,\tilde{\Z}}_{\X,\tilde{\X};\alpha} =\|R\Z^{\etree}-R\tilde\Z^{\etree}\| _{3\alpha} +\sum_{a\in A}\|R\Z^{\bullet_a}-R\tilde\Z^{\bullet_a}\|_{2\alpha}+\sum_{a,b\in A}\|R\Z^{\bullet_a\bullet_b}-R\tilde\Z^{\bullet_a\bullet_b}\|_{\alpha}+\sum_{a,b\in A}\|R\Z^{\tddeuxa{$a$}{$b$}}\ \,-R\tilde\Z^{\tddeuxa{$a$}{$b$}}\ \,\|_{\alpha}.
\mlabel{eq:ztz}
\end{equation}
To accomplish the proof, we estimate the four terms on the right hand side of the above equation.\\

\noindent{\bf Step 1:} For the first term $\| R\Z^{\etree }-R\tilde\Z^{\etree } \| _{3\alpha }$,
\begin{align*}
&\ |R\Z_{s,t}^{\etree}-R\tilde{\Z}_{s,t}^{\etree} | \\
=&\ \Big|\Big(F''(\Y_{s}^{\etree})\sum_{a,b,c\in A}\Y_{s}^{\bullet_a}\Y_{s}^{\bullet_b\bullet_c}(\X_{s,t} ^{\bullet_a\bullet_b\bullet_c}+\X_{s,t} ^{\bullet_b\bullet_c\bullet_a})+F''(\Y_{s}^{\etree})\sum_{a,b,c\in A}\Y_{s}^{\bullet_a}\Y_{s}^{\tddeuxa{$b$}{$c$}}\ \,(\X_{s,t} ^{\bullet_a\tddeuxa{$b$}{$c$}}\ \,+\X_{s,t} ^{\tddeuxa{$b$}{$c$}\ \,\bullet_a})+F'(\Y_{s}^{\etree})R\Y^{\etree}_{s,t}\\
&\ +\displaystyle\int_0^1\frac{(1-\theta )^2}{2} F'''(\Y_{s}^{\etree}+\theta \Y_{s,t}^{\etree})(\Y_{s,t}^{\etree}) ^3d\theta \Big)-\Big(F''(\tilde\Y_{s}^{\etree})\sum_{a,b,c\in A}\tilde\Y_{s}^{\bullet_a}\tilde\Y_{s}^{\bullet_b\bullet_c}(\tilde\X_{s,t} ^{\bullet_a\bullet_b\bullet_c}+\tilde\X_{s,t} ^{\bullet_b\bullet_c\bullet_a})\\
&\ +F''(\tilde\Y_{s}^{\etree})\sum_{a,b,c\in A}\tilde\Y_{s}^{\bullet_a}\tilde\Y_{s}^{\tddeuxa{$b$}{$c$}}\ \,(\tilde\X_{s,t} ^{\bullet_a\tddeuxa{$b$}{$c$}}\ \,+\tilde\X_{s,t} ^{\tddeuxa{$b$}{$c$}\ \,\bullet_a})+F'(\tilde\Y_{s}^{\etree})R\tilde\Y^{\etree}_{s,t}+\displaystyle\int_0^1\frac{(1-\theta )^2}{2} F'''(\tilde\Y_{s}^{\etree} +\theta\tilde\Y_{s,t}^{\etree})(\tilde\Y_{s,t}^{\etree})^3d\theta \Big)\Big|\\
&\hspace{7cm} (\text{by  ~(\mref{eq:simp1})})\\
\le&\ |F''(\Y_{s}^{\etree})|\sum_{a,b,c\in A}|\Y_{s}^{\bullet_a}|\,|\Y_{s}^{\bullet_b\bullet_c}|(|\X_{s,t} ^{\bullet_a\bullet_b\bullet_c}|+|\X_{s,t} ^{\bullet_b\bullet_c\bullet_a}|)+|F''(\tilde\Y_{s}^{\etree})|\sum_{a,b,c\in A}|\tilde\Y_{s}^{\bullet_a}|\,|\tilde\Y_{s}^{\bullet_b\bullet_c}|(|\tilde\X_{s,t} ^{\bullet_a\bullet_b\bullet_c}|+|\tilde\X_{s,t} ^{\bullet_b\bullet_c\bullet_a}|)\\
&\ +|F''(\Y_{s}^{\etree})|\sum_{a,b,c\in A}|\Y_{s}^{\bullet_a}|\,|\Y_{s}^{\tddeuxa{$b$}{$c$}}\ \,|(|\X_{s,t} ^{\bullet_a\tddeuxa{$b$}{$c$}}\ \,|+|\X_{s,t} ^{\tddeuxa{$b$}{$c$}\ \,\bullet_a}|)+|F''(\tilde\Y_{s}^{\etree})|\sum_{a,b,c\in A}|\tilde\Y_{s}^{\bullet_a}|\,|\tilde\Y_{s}^{\tddeuxa{$b$}{$c$}}\ \,|(|\tilde\X_{s,t} ^{\bullet_a\tddeuxa{$b$}{$c$}}\ \,|+|\tilde\X_{s,t} ^{\tddeuxa{$b$}{$c$}\ \,\bullet_a}|)\\
&\ +|F'(\Y_{s}^{\etree})R\Y_{s,t}^{\etree}
-F'(\tilde\Y_{s}^{\etree})R\tilde\Y_{s,t}^{\etree}|+\dfrac{1}{6}\|F'''\|_{\infty}|\Y_{s,t}^{\etree}|^3+\dfrac{1}{6}\|F'''\|_{\infty} |\tilde\Y_{s,t}^{\etree}|^3 \\
& \hspace{5cm} (\text{by the triangle inequality,  ~(\ref{eq:infF}) and $\displaystyle\int_0^1 \frac{(1-\theta )^2}{2}d\theta =\frac{1}{6}$})\\
\le&\ \|F''\|_{\infty}\sum_{a,b,c\in A}(|\Y_{0}^{\bullet_a}|+T^{\alpha}\|\Y^{\bullet_a}\|_{\alpha})(|\Y_{0}^{\bullet_b\bullet_c}|+T^{\alpha }\| \Y^{\bullet_b\bullet_c}\|_{\alpha })(|\X_{s,t} ^{\bullet_a\bullet_b\bullet_c}|+|\X_{s,t} ^{\bullet_b\bullet_c\bullet_a}|)\\
&\ +\|F''\|_{\infty}\sum_{a,b,c\in A}(|\tilde\Y_{0}^{\bullet_a}|+T^{\alpha}\|\tilde\Y^{\bullet_a}\|_{\alpha})(|\tilde\Y_{0}^{\bullet_b\bullet_c}|+T^{\alpha }\|\tilde\Y^{\bullet_b\bullet_c}\|_{\alpha })(|\tilde\X_{s,t} ^{\bullet_a\bullet_b\bullet_c}|+|\tilde\X_{s,t} ^{\bullet_b\bullet_c\bullet_a}|)\\
&\ \hspace{7cm}(\text{by  ~(\ref{eq:cbrp3}), (\ref{eq:cbrp4}) and~(\ref{eq:iequ4})})\\
&\ +\|F''\|_{\infty}\sum_{a,b,c\in A}(|\Y_{0}^{\bullet_a}|+T^{\alpha}\|\Y^{\bullet_a}\|_{\alpha})(|\Y_{0}^{\tddeuxa{$b$}{$c$}}\ \,|+T^{\alpha }\| \Y^{\tddeuxa{$b$}{$c$}}\ \,\|_{\alpha })(|\X_{s,t} ^{\bullet_a\tddeuxa{$b$}{$c$}}\ \,|+|\X_{s,t} ^{\tddeuxa{$b$}{$c$}\ \,\bullet_a}|)\\
&\ +\|F''\|_{\infty}\sum_{a,b,c\in A}(|\tilde\Y_{0}^{\bullet_a}|+T^{\alpha}\|\tilde\Y^{\bullet_a}\|_{\alpha})(|\tilde\Y_{0}^{\tddeuxa{$b$}{$c$}}\ \,|+T^{\alpha }\|\tilde\Y^{\tddeuxa{$b$}{$c$}}\ \,\|_{\alpha })(|\tilde\X_{s,t} ^{\bullet_a\tddeuxa{$b$}{$c$}}\ \,|+|\tilde\X_{s,t} ^{\tddeuxa{$b$}{$c$}\ \,\bullet_a}|)\\
&\ +|F'(\Y_{s}^{\etree})R\Y_{s,t}^{\etree}-F'(\tilde\Y_{s}^{\etree})R\tilde\Y_{s,t}^{\etree}|
+\dfrac{1}{6}\|F'''\|_{\infty}|\Y_{s,t}^{\etree}|^3+\dfrac{1}{6}\|F'''\|_{\infty} |\tilde\Y_{s,t}^{\etree}|^3.\\
&\ \hspace{8cm} (\text{by ~(\ref{eq:iequ3}), (\ref{eq:iequ2}) and (\ref{eq:iequ4})})
\end{align*}
By  ~(\ref{eq:distx}) and (\ref{eq:normy}), all the terms in the right hand side are in the required form except the last three summands, which will be handled as follows.
Firstly,
\begin{align}
&\ \dfrac{| F'( \Y_{s}^{\etree} ) R\Y_{s,t}^{\etree }-F'( \tilde \Y_{s}^{\etree} ) R\tilde \Y_{s,t}^{\etree } |}{|t-s|^{3\alpha}} \mlabel{eq:step1a}\\
\le&\  \dfrac{|F'(\Y_{s}^{\etree})- F'(\tilde\Y_{s}^{\etree})|\,|R\Y_{s,t}^{\etree}|+ F'(\tilde\Y_{s}^{\etree})|\,|R\Y_{s,t}^{\etree }-R\tilde \Y_{s,t}^{\etree}|}{|t-s|^{3\alpha}} \hspace{1cm} (\text{by the triangle inequality})\nonumber\\
\le&\  \dfrac{2\|F'\|_{\infty}|R\Y_{s,t}^{\etree } |+\|F'\|_{\infty}|R\Y_{s,t}^{\etree}-R\tilde \Y_{s,t}^{\etree}|}{|t-s|^{3\alpha}}  \hspace{1cm} (\text{by the triangle inequality and  ~(\ref{eq:infF})})\nonumber\\
=&\ 2\|F'\|_{\infty}\|R\Y^{\etree}\|_{3\alpha}+\|F'\|_{\infty}\|R\Y^{\etree}-R\tilde\Y^{\etree}\|_{3\alpha}\nonumber\\
\le&\ \|F\|_{C_{b}^{3}}M(\fan{\Y} _{\X;\alpha })+\|F\|_{C_{b}^{3}}\|R\Y^{\etree}-R\tilde\Y^{\etree}\|_{3\alpha}.\nonumber
\end{align}

Secondly, thanks to Theorem~\mref{thm:stab1},
\begin{align*}
&\ \dfrac{1}{6}\dfrac{\|F'''\|_{\infty}|\Y_{s,t}^{\etree}|^3}{|t-s|^{3\alpha}}\\
=&\ \dfrac{1}{6}\frac{\|F'''\|_{\infty}\Big|\sum_{a\in A}\Y_{s} ^{\bullet_a}\X_{s,t} ^{\bullet_a}+\sum_{a,b\in A}\Y_{s} ^{\bullet_a\bullet_b} \X_{s,t} ^{\bullet_a\bullet_b}+
\sum_{a,b\in A}\Y_{s} ^{\tddeuxa{$a$}{$b$}}\ \, \X_{s,t} ^{\tddeuxa{$a$}{$b$}}\ \,+R\Y^{\etree}_{s,t}\Big|^3}{|t-s|^{3\alpha}}\hspace{0.5cm} \text{(by  ~(\ref{eq:cbrp1}))}\\
\le&\ \dfrac{1}{6}\|F'''\|_{\infty}\Big(\sum_{a\in A}|\Y_{s}^{\bullet_a}|\,\|\X^{\bullet_a}\|_{\alpha}+T^{\alpha}\sum_{a,b\in A}|\Y_{s}^{\bullet_a\bullet_b}|\,\|\X^{\bullet_a\bullet_b} \|_{2\alpha}+T^{\alpha}\sum_{a,b\in A}|\Y_{s}^{\tddeuxa{$a$}{$b$}}\ \,|\,\|\X^{\tddeuxa{$a$}{$b$}}\ \,\|_{2\alpha}+T^{2\alpha}\|R\Y^{\etree}\|_{3\alpha}\Big)^3\\
&\hspace{6cm} (\text{by the triangle inequality,  ~(\ref{eq:normx}) and (\ref{eq:remainry})})\\
=&\ \|F\|_{C_{b}^{3}}M(T, \sum_{a\in A}|\Y_{0}^{\bullet_a}|, \sum_{a,b\in A}|\Y_{0}^{\bullet_a\bullet_b}|, \sum_{a,b\in A}| \Y_{0}^{\tddeuxa{$a$}{$b$}}\ \,|, \fan{\Y} _{\X;\alpha }, \fan{\X} _{\alpha }),\\
&\hspace{6cm} (\text{by  ~(\ref{eq:distx}), (\ref{eq:remain}), (\ref{eq:cbrp3}), (\ref{eq:cbrp4}}), (\ref{eq:fcbn})\,\text{ and }\, (\ref{eq:iequ4}))
\end{align*}
and by symmetry,
\begin{equation*}
\dfrac{1}{6}\dfrac{\|F'''\|_{\infty}|\tilde\Y_{s,t}^{\etree}|^3}{|t-s|^{3\alpha}}\le\|F\|_{C_{b}^{3}}M(T, \sum_{a\in A}|\tilde\Y_{0}^{\bullet_a}|, \sum_{a,b\in A}|\tilde\Y_{0}^{\bullet_a\bullet_b}|, \sum_{a,b\in A}|\tilde\Y_{0}^{\tddeuxa{$a$}{$b$}}\ \,|, \fan{\tilde\Y} _{\X;\alpha }, \fan{\tilde\X}_{\alpha }).
\end{equation*}
With the above bounds for the last three summands, we can carry the calculations further
\begin{equation}
\begin{aligned}
&\ \|R\Z^{\etree}-R\tilde{\Z}^{\etree}\|_{3\alpha } =\sup_{s\ne t\in [ 0,T]} \dfrac{|R\Z_{s,t}^{\etree}-R\tilde{\Z}_{s,t}^{\etree} |}{|t-s|^{3\alpha}} \\
\le&\
d^2\|F\|_{C_{b}^{3}} M\Big(T, \sum_{b\in A}|\Y_{0}^{\bullet_b}|, \sum_{a\in A}|\tilde\Y_{0}^{\bullet_a}|, \sum_{a,b\in A}|\Y_{0}^{\bullet_a\bullet_b}|, \sum_{a,b\in A}|\tilde\Y_{0}^{\bullet_a\bullet_b}|, \sum_{a,b\in A}| \Y_{0}^{\tddeuxa{$a$}{$b$}}\ \,|, \sum_{a,b\in A}|\tilde\Y_{0}^{\tddeuxa{$a$}{$b$}}\ \,|, \fan{\Y} _{\X;\alpha }, \fan{\tilde\Y} _{\X;\alpha},\\
&\ \hspace{2cm}\fan{\X} _{\alpha}, \fan{\tilde\X}_{\alpha}\Big)+d^2\|F\|_{C_{b}^{3}}\|R\Y^{\etree }-R\tilde\Y^{\etree }\|_{3\alpha}.
\end{aligned}
\mlabel{eq:step1}
\end{equation}

\noindent{\bf Step 2:} For $\|R\Z^{\bullet_a}-R\tilde\Z^{\bullet_a}\|_{2\alpha}$, we have
\begin{align*}
|R\Z_{s,t}^{\bullet_a}-R\tilde{\Z}_{s,t}^{\bullet_a}|
=&\ \Big|\Bigg(F''(\Y^{\etree}_{s})\Big(\sum_{a\in A}\Y_{s} ^{\bullet_a}\X_{s,t} ^{\bullet_a}+\sum_{a,b\in A}\Y_{s} ^{\bullet_a\bullet_b}\X_{s,t} ^{\bullet_a\bullet_b}+\sum_{a,b\in A}\Y_{s} ^{\tddeuxa{$a$}{$b$}}\ \, \X_{s,t}^{\tddeuxa{$a$}{$b$}}\ \,+R\Y^{\etree}_{s,t}\Big)\Y^{\bullet_{a}}_{s}\\
&\ +\Y^{\bullet_{a}}_{s}\displaystyle\int_0^1(1-\theta)F'''(\Y_{s}^{\etree}+\theta\Y_{s,t}^{\etree})(\Y_{s,t}^{\etree})^2d\theta\\
&\ +\Big(F''(\Y^{\etree}_{s})\Y^{\etree}_{s,t}
+\displaystyle\int_0^1(1-\theta)F'''(\Y_{s}^{\etree}+\theta\Y_{s,t}^{\etree})(\Y_{s,t}^{\etree})^2d\theta\Big)\Y^{\bullet_{a}}_{s,t}+F'(\Y^{\etree}_{s})R\Y_{s,t}^{\bullet_a}\Bigg)\\
&\ -\Bigg(F''(\tilde\Y^{\etree}_{s})\Big(\sum_{a\in A}\tilde\Y_{s} ^{\bullet_a}\tilde\X_{s,t} ^{\bullet_a}+\sum_{a,b\in A}\tilde\Y_{s} ^{\bullet_a\bullet_b}\tilde\X_{s,t} ^{\bullet_a\bullet_b}+\sum_{a,b\in A}\tilde\Y_{s} ^{\tddeuxa{$a$}{$b$}}\ \, \tilde\X_{s,t}^{\tddeuxa{$a$}{$b$}}\ \, +R\tilde\Y^{\etree}_{s,t}\Big)\tilde\Y^{\bullet_{a}}_{s}\\
&\ +\tilde\Y^{\bullet_{a}}_{s}\displaystyle\int_0^1(1-\theta)F'''(\tilde\Y_{s}^{\etree}+\theta\tilde\Y_{s,t}^{\etree})(\tilde\Y_{s,t}^{\etree})^2d\theta\\
&\ +\Big(F''(\tilde\Y^{\etree}_{s})\tilde\Y^{\etree}_{s,t}
+\displaystyle\int_0^1(1-\theta)F'''(\tilde\Y_{s}^{\etree}+\theta\tilde\Y_{s,t}^{\etree})(\tilde\Y_{s,t}^{\etree})^2d\theta\Big)\tilde\Y^{\bullet_{a}}_{s,t}+F'(\tilde\Y^{\etree}_{s})R\tilde\Y_{s,t}^{\bullet_a}\Bigg)\Big|\\
&\hspace{7cm} (\text{by  ~(\mref{eq:simp2})})\\
\le&\ \|F''\|_{\infty}\Big(\sum_{a\in A}|\Y_{s}^{\bullet_a}|\,|\X_{s,t} ^{\bullet_a}|+\sum_{a,b\in A}|\Y_{s} ^{\bullet_a\bullet_b}|\,|\X_{s,t} ^{\bullet_a\bullet_b}|+\sum_{a,b\in A}|\Y_{s} ^{\tddeuxa{$a$}{$b$}}\ \,| \,|\X_{s,t}^{\tddeuxa{$a$}{$b$}}\ \,|+|R\Y_{s,t}^{\etree}|\Big)|\Y^{\bullet_{a}}_{s}|\\
&\ +\|F''\|_{\infty}\Big(\sum_{a\in A}|\tilde\Y_{s}^{\bullet_a}|\,|\tilde\X_{s,t} ^{\bullet_a}|+\sum_{a,b\in A}|\tilde\Y_{s} ^{\bullet_a\bullet_b}|\,|\tilde\X_{s,t} ^{\bullet_a\bullet_b}|+\sum_{a,b\in A}|\tilde\Y_{s} ^{\tddeuxa{$a$}{$b$}}\ \,| \,|\tilde\X_{s,t}^{\tddeuxa{$a$}{$b$}}\ \,|+|R\tilde\Y_{s,t}^{\etree}|\Big)|\tilde\Y^{\bullet_{a}}_{s}|\\
&\ +\dfrac{1}{2}\|F'''\|_{\infty}|\Y^{\bullet_{a}}_{s}|\,|\Y_{s,t}^{\etree}|^2+\dfrac{1}{2}\|F'''\|_{\infty}|\tilde\Y^{\bullet_{a}}_{s}|\,|\tilde\Y_{s,t}^{\etree}|^2+\|F''\|_{\infty}|\Y_{s,t}^{\etree}|\,|\Y_{s,t}^{\bullet_{a}}|+\|F''\|_{\infty}|\tilde\Y_{s,t}^{\etree}|\,|\tilde\Y_{s,t}^{\bullet_{a}}| \\
&\ +\dfrac{1}{2}\|F'''\|_{\infty}|\Y_{s,t}^{\etree}|^2|\Y^{\bullet_{a}}_{s,t}|
+\dfrac{1}{2}\|F'''\|_{\infty}|\tilde\Y_{s,t}^{\etree}|^2|\tilde\Y^{\bullet_{a}}_{s,t}|+|F'(\Y_{s}^{\etree})R\Y_{s,t}^{\bullet_a}-F'(\tilde \Y_{s}^{\etree})R\tilde \Y_{s,t}^{\bullet_a}|.\\	
&\hspace{0.2cm} (\text{by the triangle inequality,  ~(\ref{eq:infF}), combining like terms and $\displaystyle\int_0^1 (1-\theta ) d\theta=\frac{1}{2}$})
\end{align*}
By~(\ref{eq:iequ3}), (\ref{eq:iequ2}) and (\ref{eq:iequ4}), all the terms in the right hand side are in the required form except the summands in the last two lines. With the same argument in Step 1 dealing with the last three summands, we conclude
\begin{align*}
&\ \|R\Z^{\bullet_a}-R\tilde{\Z}^{\bullet_a}\|_{2\alpha} =\sup_{s\ne t\in [ 0,T]} \dfrac{|R\Z_{s,t}^{\bullet_a}-R\tilde{\Z}_{s,t}^{\bullet_a} |}{|t-s|^{2\alpha}} \\
\le&\
\|F\|_{C_{b}^{3}} M\Big(T, \sum_{a\in A}|\Y_{0}^{\bullet_a}|, \sum_{a\in A}|\tilde\Y_{0}^{\bullet_a}|, \sum_{a,b\in A}|\Y_{0}^{\bullet_a\bullet_b}|, \sum_{a,b\in A}|\tilde\Y_{0}^{\bullet_a\bullet_b}|, \sum_{a,b\in A}| \Y_{0}^{\tddeuxa{$a$}{$b$}}\ \,|, \sum_{a,b\in A}|\tilde\Y_{0}^{\tddeuxa{$a$}{$b$}}\ \,|, \fan{\Y} _{\X;\alpha }, \fan{\tilde\Y} _{\X;\alpha},\\
&\ \hspace{1.7cm} \fan{\X} _{\alpha}, \fan{\tilde\X}_{\alpha}\Big)+\|F\|_{C_{b}^{3} }\|R\Y^{\bullet_a}-R\tilde\Y^{\bullet_a}\|_{2\alpha}.
\end{align*}
Summing up and using $d\le d^2$, we obtain
\begin{equation}
\begin{aligned}
&\ \sum_{a\in A}\|R\Z^{\bullet_a}-R\tilde{\Z}^{\bullet_a}\|_{2\alpha}\\
\le&\
d^2\|F\|_{C_{b}^{3}} M\Big(T, \sum_{a\in A}|\Y_{0}^{\bullet_a}|, \sum_{a\in A}|\tilde\Y_{0}^{\bullet_a}|, \sum_{a,b\in A}|\Y_{0}^{\bullet_a\bullet_b}|, \sum_{a,b\in A}|\tilde\Y_{0}^{\bullet_a\bullet_b}|, \sum_{a,b\in A}| \Y_{0}^{\tddeuxa{$a$}{$b$}}\ \,|, \sum_{a,b\in A}|\tilde\Y_{0}^{\tddeuxa{$a$}{$b$}}\ \,|, \fan{\Y} _{\X;\alpha },\\
&\ \hspace{1.9cm} \fan{\tilde\Y} _{\X;\alpha}, \fan{\X} _{\alpha}, \fan{\tilde\X}_{\alpha}\Big)+d^2\|F\|_{C_{b}^{3} }\sum_{a\in A}\|R\Y^{\bullet_a}-R\tilde\Y^{\bullet_a}\|_{2\alpha}.
\end{aligned}
\mlabel{eq:step2}
\end{equation}
	
\noindent{\bf Step 3:} For the $\|R\Z^{\tddeuxa{$a$}{$b$}}\ \,-R\tilde\Z^{\tddeuxa{$a$}{$b$}}\ \,\|_{\alpha}$, we have
\begin{align*}
&\ |R\Z_{s,t}^{\tddeuxa{$a$}{$b$}}\ \,-R\tilde\Z_{s,t}^{\tddeuxa{$a$}{$b$}}\ \,| = |\Z_{s,t}^{\tddeuxa{$a$}{$b$}}\ \,-\tilde\Z_{s,t}^{\tddeuxa{$a$}{$b$}}\ \,| \hspace{1cm}(\text{by  ~(\ref{eq:cbrp3})})\\
=&\ \Big|\Big((F'(\Y_{t}^{\etree})-F'(\Y_{s}^{\etree}))\Y_{t}^{\tddeuxa{$a$}{$b$}}\ \,+F'(\Y_{s}^{\etree})\Y_{s,t}^{\tddeuxa{$a$}{$b$}}\ \,\Big)-\Big((F'(\tilde\Y_{t}^{\etree})-F'(\tilde \Y_{s}^{\etree}))\tilde\Y_{t}^{\tddeuxa{$a$}{$b$}}\ \,+F'(\tilde\Y_{s}^{\etree})\tilde\Y_{s,t}^{\tddeuxa{$a$}{$b$}}\ \,\Big)\Big| \\
&\hspace{7cm} (\text{by  ~(\mref{eq:zcbrp1}}))\\
\le&\ |(F'(\Y_{t}^{\etree})-F'(\Y_{s}^{\etree}))\Y_{t}^{\tddeuxa{$a$}{$b$}}\ \,|+|(F'(\tilde\Y_{t}^{\etree})-F'(\tilde \Y_{s}^{\etree}))\tilde \Y_{t}^{\tddeuxa{$a$}{$b$}}\ \,|+|F'(\Y_{s}^{\etree})\Y_{s,t}^{\tddeuxa{$a$}{$b$}}\ \,-F'(\tilde\Y_{s}^{\etree})\tilde\Y_{s,t}^{\tddeuxa{$a$}{$b$}}\ \,| \\
&\ \hspace{7cm} (\text{by the triangle inequality}) \\
\le&\ \|F''\|_{\infty}|\Y^{\etree}_{s,t}|(|\Y_{0}^{\tddeuxa{$a$}{$b$}}\ \, |+T^{\alpha}\|\Y^{\tddeuxa{$a$}{$b$}}\ \,\|_{\alpha})+\|F''\|_{\infty}|\tilde \Y_{s,t}^{\etree}|(| \tilde \Y_{0}^{\tddeuxa{$a$}{$b$}}\ \,|+T^{\alpha}\|\tilde\Y^{\tddeuxa{$a$}{$b$}}\ \,\|_{\alpha})\\
&\ +|F'(\Y_{s}^{\etree})\Y_{s,t}^{\tddeuxa{$a$}{$b$}}\ \,-F'(\tilde\Y_{s}^{\etree})\tilde\Y_{s,t}^{\tddeuxa{$a$}{$b$}}\ \,|\\
&\hspace{2cm} (\text{by  ~(\ref{eq:infF}), (\ref{eq:iequ3}) and the differential mean value theorem of $F'$})\\
=&\ \|F''\|_{\infty}\Big|\sum_{a\in A}\Y_{s} ^{\bullet_a}\X_{s,t} ^{\bullet_a}+\sum_{a,b\in A}\Y_{s} ^{\bullet_a\bullet_b} \X_{s,t} ^{\bullet_a\bullet_b} +
\sum_{a,b\in A}\Y_{s} ^{\tddeuxa{$a$}{$b$}}\ \, \X_{s,t} ^{\tddeuxa{$a$}{$b$}}\ \,+R\Y^{\etree}_{s,t}\Big|(|\Y_{0}^{\tddeuxa{$a$}{$b$}}\ \, |+T^{\alpha}\|\Y^{\tddeuxa{$a$}{$b$}}\ \,\|_{\alpha})\\
&\ +\|F''\|_{\infty}\Big|\sum_{a\in A}\tilde\Y_{s} ^{\bullet_a}\X_{s,t} ^{\bullet_a}+\sum_{a,b\in A}\tilde\Y_{s} ^{\bullet_a\bullet_b} \X_{s,t} ^{\bullet_a\bullet_b} +
\sum_{a,b\in A}\tilde\Y_{s} ^{\tddeuxa{$a$}{$b$}}\ \, \X_{s,t} ^{\tddeuxa{$a$}{$b$}}\ \,+R\tilde\Y^{\etree}_{s,t}\Big|(|\tilde\Y_{0}^{\tddeuxa{$a$}{$b$}}\ \, |+T^{\alpha}\|\tilde\Y^{\tddeuxa{$a$}{$b$}}\ \,\|_{\alpha})\\
&\ +|F'(\Y_{s}^{\etree})\Y_{s,t}^{\tddeuxa{$a$}{$b$}}\ \,-F'(\tilde\Y_{s}^{\etree})\tilde\Y_{s,t}^{\tddeuxa{$a$}{$b$}}\ \,|.
\hspace{2cm} (\text{by  ~(\ref{eq:cbrp1})})
\end{align*}
All the summands are in the required form except the last one,
which can be handled as  ~\meqref{eq:step1a}. Then
\begin{align*}
&\ \|R\Z^{\tddeuxa{$a$}{$b$}}\ \,-R\tilde\Z^{\tddeuxa{$a$}{$b$}}\ \,\|_{\alpha} = \sup_{s\ne t\in [ 0,T]} \dfrac{|R\Z_{s,t}^{\tddeuxa{$a$}{$b$}}\ \,-R\tilde{\Z}_{s,t}^{\tddeuxa{$a$}{$b$}}\ \,|}{|t-s|^{\alpha}} \\
\le&\
\|F\|_{C_{b}^{3}} M\Big(T, \sum_{a\in A}|\Y_{0}^{\bullet_a}|, \sum_{a\in A}|\tilde\Y_{0}^{\bullet_a}|, \sum_{a,b\in A}|\Y_{0}^{\bullet_a\bullet_b}|, \sum_{a,b\in A}|\tilde\Y_{0}^{\bullet_a\bullet_b}|, \sum_{a,b\in A}| \Y_{0}^{\tddeuxa{$a$}{$b$}}\ \,|, \sum_{a,b\in A}|\tilde\Y_{0}^{\tddeuxa{$a$}{$b$}}\ \,|,\\
&\ \hspace{1.7cm} \fan{\Y} _{\X;\alpha }, \fan{\tilde\Y} _{\X;\alpha},
 \fan{\X} _{\alpha}, \fan{\tilde\X}_{\alpha}\Big)+\|F\|_{C_{b}^{3} }\|R\Y^{\tddeuxa{$a$}{$b$}}\ \,-R\tilde\Y^{\tddeuxa{$a$}{$b$}}\ \,\|_{\alpha}
\end{align*}
and so
\begin{equation}
\begin{aligned}
&\ \sum_{a,b\in A}\|R\Z^{\tddeuxa{$a$}{$b$}}\ \,-R\tilde\Z^{\tddeuxa{$a$}{$b$}}\ \,\|_{\alpha}\\
 \le&\
d^2\|F\|_{C_{b}^{3}} M\Big(T, \sum_{a\in A}|\Y_{0}^{\bullet_a}|, \sum_{a\in A}|\tilde\Y_{0}^{\bullet_a}|, \sum_{a,b\in A}|\Y_{0}^{\bullet_a\bullet_b}|, \sum_{a,b\in A}|\tilde\Y_{0}^{\bullet_a\bullet_b}|, \sum_{a,b\in A}| \Y_{0}^{\tddeuxa{$a$}{$b$}}\ \,|, \sum_{a,b\in A}|\tilde\Y_{0}^{\tddeuxa{$a$}{$b$}}\ \,|,  \\
&\ \hspace{2cm}\fan{\Y} _{\X;\alpha }, \fan{\tilde\Y} _{\X;\alpha}, \fan{\X} _{\alpha}, \fan{\tilde\X}_{\alpha}\Big)+d^2\|F\|_{C_{b}^{3}}\sum_{a,b\in A}\|R\Y^{\tddeuxa{$a$}{$b$}}\ \,-R\tilde\Y^{\tddeuxa{$a$}{$b$}}\ \,\|_{\alpha}.
\end{aligned}
\mlabel{eq:step3}
\end{equation}

\noindent{\bf Step 4:}	
Estimating $\| R\Z^{\bullet_a\bullet_b}-R\tilde\Z^{\bullet_a\bullet_b}\|_{\alpha}$ directly is difficult, so let us do some simplifications:
\begin{equation}
\begin{aligned}
&\ |R\Z_{s,t}^{\bullet_a\bullet_b}-\tilde R\Z_{s,t}^{\bullet_a\bullet_b}| = |\Z_{s,t}^{\bullet_a\bullet_b}-\tilde \Z_{s,t}^{\bullet_a\bullet_b}|\hspace{1cm}(\text{by  ~(\ref{eq:cbrp4})}) \\
=&\ \Big|\Big((F'(\Y_{t}^{\etree})\Y_{t}^{\bullet_a\bullet_b}+F''(\Y_{t}^{\etree})
\Y_{t}^{\bullet_a}\Y_{t}^{\bullet_b})-(F'(\Y_{s}^{\etree})
\Y_{s}^{\bullet_a\bullet_b}+F''(\Y_{s}^{\etree})\Y_{s}^{\bullet_a}\Y_{s}^{\bullet_b})\Big)\\
&\ -\Big((F'(\tilde\Y_{t}^{\etree})\tilde\Y_{t}^{\bullet_a\bullet_b}+F''(\tilde\Y_{t}^{\etree})\tilde\Y_{t}^{\bullet_a}\tilde\Y_{t}^{\bullet_b})-(F'(\tilde\Y_{s}^{\etree})\tilde\Y_{s}^{\bullet_a\bullet_b}+F''(\tilde\Y_{s}^{\etree})\tilde\Y_{s}^{\bullet_a}\tilde\Y_{s}^{\bullet_b})\Big)\Big| \\
&\ \hspace{7cm} (\text{by  ~(\mref{eq:zcbrp1}}))\\
\le&\  \Big|\Big((F'(\Y_{t}^{\etree})-F'(\Y_{s}^{\etree}))\Y_{t}^{\bullet_a\bullet_b}+F'(\Y_{s}^{\etree})\Y_{s,t}^{\bullet_a\bullet_b}\Big)-\Big((F'(\tilde\Y_{t}^{\etree})-F'( \tilde\Y_{s}^{\etree}))\tilde\Y_{t}^{\bullet_a\bullet_b}+F'(\tilde\Y_{s}^{\etree})\tilde\Y_{s,t}^{\bullet_a\bullet_b}\Big)\Big| \\
&\ +|F''(\Y_{t}^{\etree})\Y_{t}^{\bullet_a}\Y_{t}^{\bullet_b}
-F''(\Y_{s}^{\etree})\Y_{s}^{\bullet_a}\Y_{s}^{\bullet_b}|
+|F''(\tilde\Y_{t}^{\etree})\tilde\Y_{t}^{\bullet_a}\tilde\Y_{t}^{\bullet_b}
-F''(\tilde\Y_{s}^{\etree})\tilde\Y_{t}^{\bullet_a}\tilde\Y_{t}^{\bullet_b}|.
\end{aligned}
\mlabel{eq:I123}
\end{equation}
For convenience, let us inroduce
\begin{align*}
&I_1:=\Big|\Big((F'(\Y_{t}^{\etree})-F'(\Y_{s}^{\etree}))\Y_{t}^{\bullet_a\bullet_b}+F'(\Y_{s}^{\etree})\Y_{s,t}^{\bullet_a\bullet_b}\Big)-\Big((F'(\tilde\Y_{t}^{\etree})-F'( \tilde\Y_{s}^{\etree}))\tilde\Y_{t}^{\bullet_a\bullet_b}+F'(\tilde\Y_{s}^{\etree})\tilde\Y_{s,t}^{\bullet_a\bullet_b}\Big)\Big|,\\
&I_2:=|F''(\Y_{t}^{\etree})\Y_{t}^{\bullet_a}\Y_{t}^{\bullet_b}-F''(\Y_{s}^{\etree})\Y_{s}^{\bullet_a}\Y_{s}^{\bullet_b}|,\\
&I_3:=|F''(\tilde\Y_{t}^{\etree})\tilde\Y_{t}^{\bullet_a}\tilde\Y_{t}^{\bullet_b}-F''(\tilde\Y_{s}^{\etree})\tilde\Y_{t}^{\bullet_a}\tilde\Y_{t}^{\bullet_b}|.
\end{align*}
The estimation of $I_1$ can be written directly as
\begin{align*}
I_1
\le&\  |(F'(\Y_{t}^{\etree})-F'(\Y_{s}^{\etree}))\Y_{t}^{\bullet_a\bullet_b}|+|(F'(\tilde\Y_{t}^{\etree})-F'( \tilde\Y_{s}^{\etree}))\tilde\Y_{t}^{\bullet_a\bullet_b}|+|F'( \Y_{s}^{\etree})\Y_{s,t}^{\bullet_a\bullet_b}-F'(\tilde\Y_{s}^{\etree})\tilde\Y_{s,t}^{\bullet_a\bullet_b}| \\
&\hspace{7cm} (\text{by the triangle inequality})\\
\le&\  \|F''\|_{\infty}|\Y_{s,t}^{\etree}|(|\Y_{0}^{\bullet_a\bullet_b}|+T^{\alpha}\|\Y^{\bullet_a\bullet_b}\|_{\alpha})+\|F''\|_{\infty}|\tilde\Y_{s,t}^{\etree}|(|\tilde \Y_{0}^{\bullet_a\bullet_b}|+T^{\alpha}\|\tilde\Y^{\bullet_a\bullet_b}\|_{\alpha})\\
&\ +|F'(\Y_{s}^{\etree})\Y_{s,t}^{\bullet_a\bullet_b}-F'(\tilde\Y_{s}^{\etree})
\tilde\Y_{s,t}^{\bullet_a\bullet_b}|,\\
&\hspace{1.5cm} (\text{by the differential mean value theorem of $F'$ and  ~(\ref{eq:infF}) and (\ref{eq:iequ2})})
\end{align*}
whose last summand can be estimated similarly to  ~\meqref{eq:step1a}. Then	
\begin{align}
\frac{I_1}{|t-s|^{\alpha}}
\le&\ \|F\|_{C_{b}^{3}} M\Big(T, \sum_{a\in A}|\Y_{0}^{\bullet_a}|, \sum_{a\in A}|\tilde\Y_{0}^{\bullet_a}|, \sum_{a,b\in A}|\Y_{0}^{\bullet_a\bullet_b}|, \sum_{a,b\in A}|\tilde\Y_{0}^{\bullet_a\bullet_b}|, \sum_{a,b\in A}| \Y_{0}^{\tddeuxa{$a$}{$b$}}\ \,|, \sum_{a,b\in A}|\tilde\Y_{0}^{\tddeuxa{$a$}{$b$}}\ \,|, \mlabel{eq:eI1}\\
&\ \hspace{1.7cm} \fan{\Y} _{\X;\alpha }, \fan{\tilde\Y} _{\X;\alpha}, \fan{\X} _{\alpha}, \fan{\tilde\X}_{\alpha}\Big)+\|F\|_{C_{b}^{3} }\|R\Y^{\bullet_a\bullet_b}-R\tilde\Y^{\bullet_a\bullet_b}\|_{\alpha}.\nonumber
\end{align}
Turning to $I_2$,
\begin{align*}
I_2
\le&\ |(F''(\Y_{t}^{\etree})-F''(\Y_{s}^{\etree}))
\Y_{t}^{\bullet_a}\Y_{t}^{\bullet_b}|+ |F''(\Y_{s}^{\etree})
(\Y_{t}^{\bullet_a}\Y_{s,t}^{\bullet_b}+\Y_{s,t}^{\bullet_a}\Y_{s}^{\bullet_b})|\\
\le&\ \|F'''\|_{\infty}|\Y_{s,t}^{\etree}|\,|\Y_{t}^{\bullet_a}|\,|\Y_{t}^{\bullet_b}|+\|F''\|_{\infty}|\Y_{t}^{\bullet_a}|\,|\Y_{s,t}^{\bullet_b}|+\|F''\|_{\infty}|\Y_{s,t}^{\bullet_a}|\,|\Y_{s}^{\bullet_b}|\\
&\ \hspace{5cm} (\text{by the differential mean value theorem of $F''$and  ~(\ref{eq:infF})}) \\
\le&\  \|F'''\|_{\infty}|\Y_{s,t}^{\etree}|(|\Y_{0}^{\bullet_a}|+\|\Y^{\bullet_a}\|_{\alpha})(|\Y_{0}^{\bullet_b}|+\|\Y^{\bullet_b}\|_{\alpha})\\
&\ +\|F''\|_{\infty}\Bigg(|\Y_{0}^{\bullet_a}|+T^{\alpha}\sum_{b\in A}\Big((|\Y_{0}^{\bullet_a\bullet_b}|+T^{\alpha }\| \Y^{\bullet_a\bullet_b}\|_{\alpha })+(|\Y_{0}^{\tddeuxa{$a$}{$b$}}\ \,|+T^{\alpha }\| \Y^{\tddeuxa{$a$}{$b$}}\ \,\|_{\alpha })\Big)\| \X^{\bullet_b} \|_{\alpha }+T^{2\alpha }\|R\Y^{\bullet_a}\|_{2\alpha }\Bigg)|\Y_{s,t}^{\bullet_b}|\\
&\ +\|F''\|_{\infty}|\Y_{s,t}^{\bullet_a}|\Bigg(|\Y_{0}^{\bullet_b}|+T^{\alpha}\sum_{c\in A}\Big((|\Y_{0}^{\bullet_b\bullet_c}|+T^{\alpha }\| \Y^{\bullet_b\bullet_c}\|_{\alpha })+(|\Y_{0}^{\tddeuxa{$b$}{$c$}}\ \,|+T^{\alpha }\| \Y^{\tddeuxa{$b$}{$c$}}\ \,\|_{\alpha })\Big)\| \X^{\bullet_c} \|_{\alpha }+T^{2\alpha }\|R\Y^{\bullet_b}\|_{2\alpha }\Bigg)\\
&\hspace{8cm} (\text{by  ~(\ref{eq:iequ1}) and (\ref{eq:iequ4})})
\end{align*}
and so
\begin{align}
\frac{I_2}{|t-s|^{\alpha}}
\le&\ \|F'''\|_{\infty}(|\Y_{0}^{\bullet_a}|+\|\Y^{\bullet_a}\|_{\alpha})(|\Y_{0}^{\bullet_b}|+\|\Y^{\bullet_b}\|_{\alpha})T^{2\alpha}\|\Y^{\etree}\|_{3\alpha}\hspace{3cm}\mlabel{eq:eI2}\\
&\ +\|F''\|_{\infty}\Bigg(|\Y_{0}^{\bullet_a}|+T^{\alpha}\sum_{b\in A}\Big((|\Y_{0}^{\bullet_a\bullet_b}|+T^{\alpha }\| \Y^{\bullet_a\bullet_b}\|_{\alpha })+(|\Y_{0}^{\tddeuxa{$a$}{$b$}}\ \,|+T^{\alpha }\| \Y^{\tddeuxa{$a$}{$b$}}\ \,\|_{\alpha })\Big)\| \X^{\bullet_b} \|_{\alpha }\nonumber\\
&\ +T^{2\alpha }\|R\Y^{\bullet_a}\|_{2\alpha }\Bigg)T^{\alpha}\|\Y^{\bullet_b}\|_{2\alpha}\nonumber\\
&\ +\|F''\|_{\infty}\Bigg(|\Y_{0}^{\bullet_b}|+T^{\alpha}\sum_{c\in A}\Big((|\Y_{0}^{\bullet_b\bullet_c}|+T^{\alpha }\| \Y^{\bullet_b\bullet_c}\|_{\alpha })+(|\Y_{0}^{\tddeuxa{$b$}{$c$}}\ \,|+T^{\alpha }\| \Y^{\tddeuxa{$b$}{$c$}}\ \,\|_{\alpha })\Big)\| \X^{\bullet_c} \|_{\alpha }\nonumber\\
&\ +T^{2\alpha }\|R\Y^{\bullet_b}\|_{2\alpha }\Bigg)T^{\alpha}\|\Y^{\bullet_a}\|_{2\alpha} \nonumber\\
\le&\ \|F\|_{C_{b}^{3}} M\Big(T, \sum_{a\in A}|\Y_{0}^{\bullet_a}|, \sum_{a\in A}|\tilde\Y_{0}^{\bullet_a}|, \sum_{a,b\in A}|\Y_{0}^{\bullet_a\bullet_b}|, \sum_{a,b\in A}|\tilde\Y_{0}^{\bullet_a\bullet_b}|, \sum_{a,b\in A}| \Y_{0}^{\tddeuxa{$a$}{$b$}}\ \,|, \sum_{a,b\in A}|\tilde\Y_{0}^{\tddeuxa{$a$}{$b$}}\ \,|, \nonumber\\
&\ \hspace{1.6cm} \fan{\Y} _{\X;\alpha }, \fan{\tilde\Y} _{\X;\alpha}, \fan{\X} _{\alpha}, \fan{\tilde\X}_{\alpha}\Big).\nonumber
\end{align}
Similarly,
\begin{align}
\frac{I_3}{|t-s|^{\alpha}}
\le&\ \|F'''\|_{\infty}(|\tilde\Y_{0}^{\bullet_a}|+\|\tilde\Y^{\bullet_a}\|_{\alpha})(|\tilde\Y_{0}^{\bullet_b}|+\|\tilde\Y^{\bullet_b}\|_{\alpha})T^{2\alpha}\|\tilde\Y^{\etree}\|_{3\alpha} \hspace{3cm}\mlabel{eq:eI3}\\
&\ +\|F''\|_{\infty}\Bigg(|\tilde\Y_{0}^{\bullet_a}|+T^{\alpha}\sum_{b\in A}\Big((|\tilde\Y_{0}^{\bullet_a\bullet_b}|+T^{\alpha }\|\tilde\Y^{\bullet_a\bullet_b}\|_{\alpha })+(|\tilde\Y_{0}^{\tddeuxa{$a$}{$b$}}\ \,|+T^{\alpha }\|\tilde\Y^{\tddeuxa{$a$}{$b$}}\ \,\|_{\alpha })\Big)\|\tilde\X^{\bullet_b} \|_{\alpha }\nonumber\\
&\ +T^{2\alpha }\|R\tilde\Y^{\bullet_a}\|_{2\alpha }\Bigg)T^{\alpha}\|\tilde\Y^{\bullet_b}\|_{2\alpha}\nonumber\\
&\ +\|F''\|_{\infty}\Bigg(|\tilde\Y_{0}^{\bullet_b}|+T^{\alpha}\sum_{c\in A}\Big((|\tilde\Y_{0}^{\bullet_b\bullet_c}|+T^{\alpha }\|\tilde\Y^{\bullet_b\bullet_c}\|_{\alpha })+(|\tilde\Y_{0}^{\tddeuxa{$b$}{$c$}}\ \,|+T^{\alpha }\|\tilde\Y^{\tddeuxa{$b$}{$c$}}\ \,\|_{\alpha })\Big)\|\tilde\X^{\bullet_c} \|_{\alpha }\nonumber\\
&\ +T^{2\alpha }\|R\tilde\Y^{\bullet_b}\|_{2\alpha }\Bigg)T^{\alpha}\|\tilde\Y^{\bullet_a}\|_{2\alpha}\nonumber\\
\le&\ \|F\|_{C_{b}^{3}} M\Big(T, \sum_{a\in A}|\Y_{0}^{\bullet_a}|, \sum_{a\in A}|\tilde\Y_{0}^{\bullet_a}|, \sum_{a,b\in A}|\Y_{0}^{\bullet_a\bullet_b}|, \sum_{a,b\in A}|\tilde\Y_{0}^{\bullet_a\bullet_b}|, \sum_{a,b\in A}| \Y_{0}^{\tddeuxa{$a$}{$b$}}\ \,|, \sum_{a,b\in A}|\tilde\Y_{0}^{\tddeuxa{$a$}{$b$}}\ \,|, \nonumber\\
&\ \hspace{1.7cm} \fan{\Y} _{\X;\alpha }, \fan{\tilde\Y} _{\X;\alpha}, \fan{\X} _{\alpha}, \fan{\tilde\X}_{\alpha}\Big).\nonumber
\end{align}
Hence
\begin{align*}
&\ \|R\Z^{\bullet_a\bullet_b}-R\tilde{\Z}^{\bullet_a\bullet_b}\|_{\alpha} = \sup_{s\ne t\in [ 0,T]} \dfrac{|R\Z_{s,t}^{\bullet_a\bullet_b}-R\tilde{\Z}_{s,t}^{\bullet_a\bullet_b}|}{|t-s|^{\alpha}} \le  \sup_{s\ne t\in [0,T]} \frac{I_1+I_2+I_3}{|t-s|^{\alpha}}  \quad (\text{by  ~\eqref{eq:I123}})\\
\le&\ \|F\|_{C_{b}^{3}} M\Big(T, \sum_{a\in A}|\Y_{0}^{\bullet_a}|, \sum_{a\in A}|\tilde\Y_{0}^{\bullet_a}|, \sum_{a,b\in A}|\Y_{0}^{\bullet_a\bullet_b}|, \sum_{a,b\in A}|\tilde\Y_{0}^{\bullet_a\bullet_b}|, \sum_{a,b\in A}| \Y_{0}^{\tddeuxa{$a$}{$b$}}\ \,|, \sum_{a,b\in A}|\tilde\Y_{0}^{\tddeuxa{$a$}{$b$}}\ \,|, \fan{\Y} _{\X;\alpha }, \fan{\tilde\Y} _{\X;\alpha},\\
&\ \hspace{1.7cm} \fan{\X} _{\alpha}, \fan{\tilde\X}_{\alpha}\Big)+\|F\|_{C_{b}^{3} }\|R\Y^{\bullet_a\bullet_b}-R\tilde\Y^{\bullet_a\bullet_b}\|_{\alpha}
\hspace{0.5cm} (\text{by  ~\eqref{eq:eI1}, \eqref{eq:eI2}, \eqref{eq:eI3} and adjusting $M$})
%
\end{align*}
and so
\begin{align}
\sum_{a,b\in A}\|R\Z^{\bullet_a\bullet_b}-R\tilde{\Z}^{\bullet_a\bullet_b}\|_{\alpha}
\le&\ d^2\|F\|_{C_{b}^{3}} M\Big(T, \sum_{a\in A}|\Y_{0}^{\bullet_a}|, \sum_{a\in A}|\tilde\Y_{0}^{\bullet_a}|, \sum_{a,b\in A}|\Y_{0}^{\bullet_a\bullet_b}|, \sum_{a,b\in A}|\tilde\Y_{0}^{\bullet_a\bullet_b}|, \sum_{a,b\in A}| \Y_{0}^{\tddeuxa{$a$}{$b$}}\ \,|, \sum_{a,b\in A}|\tilde\Y_{0}^{\tddeuxa{$a$}{$b$}}\ \,|, \mlabel{eq:step4}\\
&\ \hspace{2cm} \fan{\Y} _{\X;\alpha },\fan{\tilde\Y} _{\X;\alpha}, \fan{\X} _{\alpha}, \fan{\tilde\X}_{\alpha}\Big)+d^2\|F\|_{C_{b}^{3}}\sum_{a,b\in A}\|R\Y^{\bullet_a\bullet_b}-R\tilde\Y^{\bullet_a\bullet_b}\|_{\alpha}.\nonumber
\end{align}
	
Finally, gather the previous bounds in Steps 1-4 and conclude that
\begin{align*}
&\ \fan{\Z,\tilde{\Z}}_{\X,\tilde{\X};\alpha}\\
=&\ \|R\Z^{\etree}-R\tilde\Z^{\etree}\| _{3\alpha} +\sum_{a\in A}\|R\Z^{\bullet_a}-R\tilde\Z^{\bullet_a}\|_{2\alpha}+\sum_{a,b\in A}\|R\Z^{\bullet_a \bullet_b}-R\tilde\Z^{\bullet_a\bullet_a}\|_{\alpha}+\sum_{a,b\in A}\|R\Z^{\tddeuxa{$a$}{$b$}}\ \,-R\tilde\Z^{\tddeuxa{$a$}{$b$}}\ \,\|_{\alpha}\\
&\ \hspace{7cm} (\text{by  ~\meqref{eq:ztz}})\\
\le&\  4d^2\|F\|_{C_{b}^{3}} M\Big(T, \sum_{a\in A}|\Y_{0}^{\bullet_a}|, \sum_{a\in A}|\tilde\Y_{0}^{\bullet_a}|, \sum_{a,b\in A}|\Y_{0}^{\bullet_a\bullet_b}|, \sum_{a,b\in A}|\tilde\Y_{0}^{\bullet_a\bullet_b}|, \sum_{a,b\in A}| \Y_{0}^{\tddeuxa{$a$}{$b$}}\ \,|, \sum_{a,b\in A}|\tilde\Y_{0}^{\tddeuxa{$a$}{$b$}}\ \,|, \fan{\Y} _{\X;\alpha }, \\
&\ \hspace{2.1cm}\fan{\tilde\Y} _{\X;\alpha}, \fan{\X} _{\alpha}, \fan{\tilde\X}_{\alpha}\Big)\\
&\ +d^2\|F\|_{C_{b}^{3}}\Big(\|R\Y^{\etree }-R\tilde\Y^{\etree }\|_{3\alpha}+\sum_{a\in A}\|R\Y^{\bullet_a}-R\tilde\Y^{\bullet_a}\|_{2\alpha}+\sum_{a,b\in A}\|R\Y^{\tddeuxa{$a$}{$b$}}\ \,-R\tilde\Y^{\tddeuxa{$a$}{$b$}}\ \,\|_{\alpha}+\sum_{a,b\in A}\|R\Y^{\bullet_a\bullet_b}-R\tilde\Y^{\bullet_a\bullet_b}\|_{\alpha}\Big)\\
&\ \hspace{5cm} (\text{by summing up  ~(\ref{eq:step1}), (\ref{eq:step2}), (\ref{eq:step3}) and (\ref{eq:step4})})\\
=&\  4d^2\|F\|_{C_{b}^{3}} M\Big(T, \sum_{a\in A}|\Y_{0}^{\bullet_a}|, \sum_{a\in A}|\tilde\Y_{0}^{\bullet_a}|, \sum_{a,b\in A}|\Y_{0}^{\bullet_a\bullet_b}|, \sum_{a,b\in A}|\tilde\Y_{0}^{\bullet_a\bullet_b}|, \sum_{a,b\in A}| \Y_{0}^{\tddeuxa{$a$}{$b$}}\ \,|, \sum_{a,b\in A}|\tilde\Y_{0}^{\tddeuxa{$a$}{$b$}}\ \,|, \fan{\Y} _{\X;\alpha }, \\
&\ \hspace{2.1cm}\fan{\tilde\Y} _{\X;\alpha}, \fan{\X} _{\alpha}, \fan{\tilde\X}_{\alpha}\Big)+d^2\|F\|_{C_{b}^{3}}\fan {\Y,\tilde{\Y}}_{\X,\tilde{\X};\alpha}  \hspace{1cm} (\text{by  ~\meqref{eq:dist}})\\
\le&\  4d^2\|F\|_{C_{b}^{3}}\Bigg( M\Big(T, \sum_{a\in A}|\Y_{0}^{\bullet_a}|, \sum_{a\in A}|\tilde\Y_{0}^{\bullet_a}|, \sum_{a,b\in A}|\Y_{0}^{\bullet_a\bullet_b}|, \sum_{a,b\in A}|\tilde\Y_{0}^{\bullet_a\bullet_b}|, \sum_{a,b\in A}| \Y_{0}^{\tddeuxa{$a$}{$b$}}\ \,|, \sum_{a,b\in A}|\tilde\Y_{0}^{\tddeuxa{$a$}{$b$}}\ \,|, \fan{\Y} _{\X;\alpha }, \\
&\ \hspace{2.3cm}\fan{\tilde\Y} _{\X;\alpha}, \fan{\X} _{\alpha}, \fan{\tilde\X}_{\alpha}\Big)+\fan {\Y,\tilde{\Y}}_{\X,\tilde{\X};\alpha}\Bigg)  \hspace{1cm} \\
\le&\ 4d^2\|F\|_{C_{b}^{3}}\Bigg(M\Big(T, \sum_{a\in A}|\Y_{0}^{\bullet_a}|, \sum_{a\in A}|\tilde\Y_{0}^{\bullet_a}|, \sum_{a,b\in A}|\Y_{0}^{\bullet_a\bullet_b}|, \sum_{a,b\in A}|\tilde\Y_{0}^{\bullet_a\bullet_b}|, \sum_{a,b\in A}| \Y_{0}^{\tddeuxa{$a$}{$b$}}\ \,|, \sum_{a,b\in A}|\tilde\Y_{0}^{\tddeuxa{$a$}{$b$}}\ \,|, \fan{\Y} _{\X;\alpha }, \\
&\ \hspace{2.2cm}\fan{\tilde\Y} _{\X;\alpha}, \fan{\X} _{\alpha}, \fan{\tilde\X}_{\alpha}\Big)+1\Bigg)\times \Big(\fan{\Y,\tilde{\Y}}_{\X,\tilde{\X};\alpha} +\fan{\X,\tilde{\X}}_{\alpha}+|\Y_0^\etree -\tilde{\Y}^\etree_0|\Big)\\
&\ \hspace{3cm}(\text{each above item appears in below and adjust the below $M$})\\
=&\ 4d^2\|F\|_{C_{b}^{3}}M\Big(T, \sum_{a\in A}|\Y_{0}^{\bullet_a}|, \sum_{a\in A}|\tilde\Y_{0}^{\bullet_a}|, \sum_{a,b\in A}|\Y_{0}^{\bullet_a\bullet_b}|, \sum_{a,b\in A}|\tilde\Y_{0}^{\bullet_a\bullet_b}|, \sum_{a,b\in A}| \Y_{0}^{\tddeuxa{$a$}{$b$}}\ \,|, \sum_{a,b\in A}|\tilde\Y_{0}^{\tddeuxa{$a$}{$b$}}\ \,|, \fan{\Y} _{\X;\alpha }, \\
&\ \hspace{2.1cm}\fan{\tilde\Y} _{\X;\alpha}, \fan{\X} _{\alpha}, \fan{\tilde\X}_{\alpha}\Big)\times \Big(\fan{\Y,\tilde{\Y}}_{\X,\tilde{\X};\alpha} +\fan{\X,\tilde{\X}}_{\alpha}+|\Y_0^\etree -\tilde{\Y}^\etree_0|\Big).
\hspace{0.5cm}(\text{by adjusting $M$})
\end{align*}
This completes the proof.
\end{proof}

\section{Planarly branched universal limit theorem}
In this section, we first review the concept of rough integral of controlled planarly branched rough paths against planarly branched rough paths as a tool to solve RODEs. Building on this, we then prove the universal limit theorem for planarly branched rough paths.

\subsection{Integration of planarly branched rough paths}\label{sec3.1}
In the remainder of this section, we always assume
\begin{equation}
X=(X^1,\ldots,X^d):[0,T]\rightarrow \mathbb{R}^d,\quad Y:[0,T]\rightarrow \mathbb{R}^n,\quad F=(F^1,\ldots,F^d):\mathbb{R}^n \rightarrow  (\mathbb{R}^n)^d,
\mlabel{eq:XYF}
\end{equation}
with each $F^i:\mathbb{R}^n \rightarrow\mathbb{R}^n $ smooth. We want to define the planar rough integral by the Sewing lemma:
\begin{lemma}~\cite{FP06, Gub04}
For any continuous function $A: [0, T]\times[0, T] \to \mathbb{R}$ and some constant C, if
\begin{equation}
|A_{s,t}-A_{s,u}-A_{u,t}|\le C|t-s|^p,
\mlabel{eq:seweq}
\end{equation}
for some $p>1$, then there exists a unique function $I: [0, T] \to \mathbb{R}$ such that $I_0=0$ and
$
|I_t-I_s-A_{s,t}|= O(| t-s | ^{p} ),
$
uniformly over $0\le s\le t\le T$. Moreover I is the limit of Riemann-type sums
$
I_t=\lim\limits_{|\pi | \to 0} \sum\limits_{[t_i, t_{i+1}]\in \pi}A_{t_i, t_{i+1}},
$
where $\pi$ is an arbitrary partition of $[0, T]$.
\mlabel{lem:sew}
\end{lemma}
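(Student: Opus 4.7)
The plan is to construct $I$ as the limit of Riemann sums over finer and finer partitions, and then to deduce both the sewing estimate and uniqueness from a single combinatorial bound.

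For $t\in[0,T]$ and a partition $\pi=\{0=t_0<t_1<\cdots<t_n=t\}$ of $[0,t]$, set $I_t^{\pi}:=\sum_{i=0}^{n-1}A_{t_i,t_{i+1}}$. The heart of the argument is the estimate
\[
|I_t^{\pi'}-I_t^{\pi}|\le 2^p C\,\zeta(p)\sum_{i=0}^{n-1}(t_{i+1}-t_i)^p
\]
for any refinement $\pi'\supset\pi$, where $\zeta(p)=\sum_{j\ge 1}j^{-p}<\infty$ precisely because $p>1$. To prove this, I restrict to a single sub-interval $[t_i,t_{i+1}]$ and delete the points of $\pi'\cap(t_i,t_{i+1})$ one at a time by a greedy rule: at each stage remove the interior point whose two current neighbours are closest. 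Before the $k$-th removal, there are $m_i-k+1$ interior points splitting $[t_i,t_{i+1}]$ into $m_i-k+2$ sub-intervals whose adjacent-pair lengths sum to at most $2(t_{i+1}-t_i)$; hence the minimum pair length is at most $2(t_{i+1}-t_i)/(m_i-k+1)$. Each removal then costs at most $C$ times the $p$-th power of that gap by the hypothesis \meqref{eq:seweq}, and summing in $k$ produces the factor $\zeta(p)$.

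Since $\sum_i(t_{i+1}-t_i)^p\le|\pi|^{p-1}\,t$, the lemma gives $|I_t^{\pi'}-I_t^{\pi}|\to 0$ as $|\pi|\to 0$, uniformly in $\pi'$. Comparing any two partitions through their common refinement, $(I_t^{\pi})$ is Cauchy; define $I_t$ to be its limit and set $I_0:=0$. Applying the same estimate on a sub-interval $[s,t]$ with $\pi=\{s,t\}$ and an arbitrary refinement $\pi'$, and then passing to the limit in $\pi'$, yields
\[
|I_t-I_s-A_{s,t}|\le 2^p C\,\zeta(p)\,|t-s|^p=O(|t-s|^p),
\]
which is the sewing bound and also shows that $I$ is continuous.

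For uniqueness, suppose $J$ satisfies $J_0=0$ and $|J_t-J_s-A_{s,t}|\le C'|t-s|^p$. Setting $\Delta_{s,t}:=(J-I)_t-(J-I)_s$, the triangle inequality gives $|\Delta_{s,t}|\le(C+C')|t-s|^p$, and telescoping along any partition $\pi=\{s=t_0<\cdots<t_n=t\}$,
\[
|\Delta_{s,t}|\le\sum_{i=0}^{n-1}|\Delta_{t_i,t_{i+1}}|\le(C+C')\sum_{i=0}^{n-1}|t_{i+1}-t_i|^p\le(C+C')|\pi|^{p-1}(t-s)\to 0
\]
as $|\pi|\to 0$, so $J-I$ is constant on $[0,T]$ and hence identically zero. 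The main obstacle is the greedy removal step: it is the only non-routine combinatorial ingredient and the unique point where $p>1$ is used essentially, through the convergence of $\sum j^{-p}$. Everything else follows mechanically from this single bound together with standard Cauchy/limit arguments.
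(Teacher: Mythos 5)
Your argument is correct, and it is the classical proof of the sewing lemma (Young's point-removal trick combined with a Cauchy argument along refinements). Note, however, that the paper does not prove Lemma~\ref{lem:sew} at all: it is imported verbatim from \cite{LL22}, so there is no internal proof to compare against. Your write-up therefore supplies a self-contained argument for the only case the paper actually uses, namely $p=4\alpha>1$, whereas the cited reference is concerned with the more delicate regime of exponents at most $1$; for the present purposes your elementary route is entirely adequate, and the key combinatorial bound (minimum adjacent-pair length at most $2(t_{i+1}-t_i)$ divided by the number of interior points, each deletion costing $C$ times that gap to the power $p$, summed against $\zeta(p)$) is stated and used correctly. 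Two small points are glossed over but are routine: (i) to apply your refinement estimate on $[s,t]$ you need the additivity $I_t-I_s=\lim_{|\pi|\to 0}\sum_{[u_i,u_{i+1}]\in\pi}A_{u_i,u_{i+1}}$ over partitions $\pi$ of $[s,t]$, which follows by restricting to partitions of $[0,t]$ containing $s$ and splitting the sum at $s$; (ii) the continuity of $I$ uses $A_{s,s}=0$, which indeed follows from \eqref{eq:seweq} applied with $s=u=t$. Finally, in the uniqueness step the constant attached to $I$ is $2^{p}C\zeta(p)$ rather than $C$, but since only finiteness of the constant matters, this does not affect the telescoping argument, which correctly exploits $p>1$ via $\sum_i|t_{i+1}-t_i|^p\le|\pi|^{p-1}(t-s)\to 0$.
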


Let $\Y$ be an $\X$-controlled planarly branched rough path above $Y$. Then we can use $\Y$ to define the integral $\int_s^t YdX^a$ for any $1 \le a \le d$.
\begin{theorem}
Let $\alpha \in  ( \frac{1}{4}, \frac{1}{3}]$, $\X\in \brpt$ and $\Y\in \cbrpxd$ above Y. Define	
\begin{equation}
\tilde{Y}_{s,t} :=\sum_{\tau\in \mathcal{F}^{\leq 2}}\Y_{s}^\tau\X_{s,t}^{[\tau]_a}.
\mlabel{eq:inte1}
\end{equation}
Then $\tilde{Y}$ satisfies (\ref{eq:seweq}). By Lemma~\ref{lem:sew}, there is a unique function $I:[0, T]\rightarrow \RR^d$  such that
\begin{equation}
\Big|\int_s^t Y_rdX_{r}^{a}-\sum_{\tau\in \mathcal{F}^{\leq 2}}\Y_{s}^\tau\X_{s,t}^{[\tau]_a}\Big|= O(| t-s | ^{4\alpha} )
\mlabel{eq:ieq1}
\end{equation}
and
\begin{equation*}
I_t=\int_0^t Y_rdX_{r}^{a}=\lim_{|\pi | \to 0} \sum_{[t_i, t_{i+1}]\in \pi}\sum_{\tau\in \mathcal{F}^{\leq 2}}\Y_{t_i}^\tau\X_{t_i,t_{i+1}}^{[\tau]_a},
\end{equation*}
where $\pi$ is an arbitrary partition of $[0, T]$.
\mlabel{thm:inte1}
\end{theorem}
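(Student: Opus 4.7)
The plan is to invoke the Sewing Lemma (Lemma~\ref{lem:sew}) with $p = 4\alpha$, which is strictly greater than $1$ since $\alpha \in (\tfrac14,\tfrac13]$. Accordingly, the content of the theorem reduces to verifying the two-level Chen-type estimate
\[
|\delta\tilde{Y}_{s,u,t}| := |\tilde{Y}_{s,t} - \tilde{Y}_{s,u} - \tilde{Y}_{u,t}| \leq C\,|t-s|^{4\alpha},
\]
with $C$ depending on $T$, $\fan{\mathbf{X}}_\alpha$ and $\fan{\mathbf{Y}}_{\mathbf{X};\alpha}$. Once this is proved, the conclusions (3.3) and the Riemann-sum representation follow directly from the two assertions of Lemma~\ref{lem:sew}, with $I_0=0$ and $I_t$ playing the role of $\int_0^t Y_r\,dX^a_r$.

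First I would expand $\mathbf{X}_{s,t}^{[\tau]_a}$ for each $\tau\in\mathcal{F}^{\leq 2}$ using Chen's identity (Definition~\ref{def:pbrp}(a)): writing
\[
\mathbf{X}_{s,t}^{[\tau]_a} = \langle \mathbf{X}_{s,u}\otimes \mathbf{X}_{u,t},\,\Delta_{\mathrm{MKW}}([\tau]_a)\rangle = \sum_{\sigma',\sigma''} c\big([\tau]_a;\sigma',\sigma''\big)\, \mathbf{X}_{s,u}^{\sigma'}\mathbf{X}_{u,t}^{\sigma''},
\]
so that $\tilde{Y}_{s,t}$ decomposes as $\tilde{Y}_{s,u}$ plus a collection of ``mixed'' terms. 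In parallel, I would use the relations (2.5)--(2.8) defining the controlled planarly branched rough path to substitute, inside $\tilde{Y}_{u,t}$, every value $\mathbf{Y}_u^\tau$ by $\mathbf{Y}_s^\tau + \sum_{\sigma,\rho} c(\sigma;\tau,\rho)\mathbf{Y}_s^\sigma\mathbf{X}_{s,u}^\rho + R\mathbf{Y}_{s,u}^\tau$. The two expansions are then compared term by term.

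The key algebraic point, which I would isolate as a short lemma or inline identity, is that the leading-order contributions in the two expansions match exactly, because the coproduct $\Delta_{\mathrm{MKW}}$ on $[\tau]_a$ restricted to $\mathcal{F}^{\leq 2}\otimes\mathcal{F}^{\leq 2}$ is dual to the fact that the truncated map $\mathbf{X}_{s,t}\star\cdot$ acts on $\mathbf{Y}_s$ as one reads from (2.5)--(2.8). After this exact cancellation, only the residual terms survive, each of the form $R\mathbf{Y}_{s,u}^\sigma\,\mathbf{X}_{u,t}^{\rho}$ or $\mathbf{Y}_s^\sigma\,\mathbf{X}_{s,u}^{\rho_1}\mathbf{X}_{u,t}^{\rho_2}$ with weights $(3-|\sigma|)\alpha + |\rho|\alpha\geq 4\alpha$ and $(|\rho_1|+|\rho_2|)\alpha \geq 4\alpha$, respectively; Remark~\ref{re:ieq} together with (2.2) and (2.12) then yield the required $|t-s|^{4\alpha}$ bound with $C$ of the advertised form.

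The main obstacle is the bookkeeping in the comparison step: the MKW coproduct on trees obtained via the grafting $[\,\cdot\,]_a$ of size up to three has several admissible left cuts, and one must check that every ``mixed'' monomial $\mathbf{X}_{s,u}^{\sigma'}\mathbf{X}_{u,t}^{\sigma''}$ appearing in $\tilde{Y}_{s,t}$ has an exact partner among the terms produced by substituting $\mathbf{Y}_u^\tau$ in $\tilde{Y}_{u,t}$. This is where the asymmetric roles of ``remaining part containing the root'' and ``pruned part'' in the coefficients $c(\sigma;\sigma',\sigma'')$ must be tracked with care, in complete analogy with the pre-Lie case of~\cite{Gub1}. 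Once this matching is established, the rest of the argument is the same sort of Hölder bounding used in Theorem~\ref{thm:stab1}, and passing to $p=4\alpha$ completes the invocation of Lemma~\ref{lem:sew}.
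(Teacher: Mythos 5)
Your proposal follows essentially the same route as the paper: both verify the sewing estimate for $\tilde{Y}$ by expanding $\X_{s,t}^{[\tau]_a}$ via Chen's identity (Definition~\ref{def:pbrp}(a)), using the controlled-path relations (\ref{eq:cbrp1})--(\ref{eq:cbrp4}) to cancel the leading terms, and bounding the surviving remainder products $R\Y^{\sigma}_{s,u}\X^{[\sigma]_a}_{u,t}$ with total H\"older weight $(3-|\sigma|)\alpha+(|\sigma|+1)\alpha=4\alpha>1$ before invoking Lemma~\ref{lem:sew}. The exponent bookkeeping matches the paper's (your extra category of terms $\Y_s^{\sigma}\X_{s,u}^{\rho_1}\X_{u,t}^{\rho_2}$ with $|\rho_1|+|\rho_2|\ge 4$ does not actually occur after the cancellation, but its inclusion is harmless), so the argument is correct.
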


\begin{proof}
It suffices to prove that $\tilde{Y}$ satisfies (\ref{eq:seweq}). We compute
\begin{equation}
\begin{aligned}
|\tilde{Y}_{s,t}-\tilde{Y}_{s,u}-\tilde{Y}_{u,t}|
=&\ \Big|\sum_{\tau\in \mathcal{F}^{\leq 2}}\Y_{s}^\tau\X_{s,t}^{[\tau]_a}-\sum_{\tau\in \mathcal{F}^{\leq 2}}\Y_{s}^\tau\X_{s,u}^{[\tau]_a}-\sum_{\tau\in \mathcal{F}^{\leq 2}}\Y_{u}^\tau\X_{u,t}^{[\tau]_a}\Big|\hspace{1cm}(\text{by (\ref{eq:inte1})})\\
=&\ \Big|\Big(\Y^{\etree}_s\X^{\bullet_a}_{s,t}+\sum_{b\in A}\Y^{\bullet_b}_s\X^{\tddeuxa{$a$}{$b$}}_{s,t}\ \,+\sum_{b,c\in A}\Y^{\bullet_b\bullet_c}_s\X^{\tdtroisuna{$a$}{$c$}{$b$}}_{s,t}\ \,+\sum_{b,c\in A}\Y^{\tddeuxa{$b$}{$c$}}_s\ \,\X^{\tdddeuxa{$a$}{$b$}{$c$}}_{s,t}\, \Big) \\
&\ -\Big(\Y^{\etree}_s\X^{\bullet_a}_{s,u}+\sum_{b\in A}\Y^{\bullet_b}_s\X^{\tddeuxa{$a$}{$b$}}_{s,u}\ \,+\sum_{b,c\in A}\Y^{\bullet_b\bullet_c}_s\X^{\tdtroisuna{$a$}{$c$}{$b$}}_{s,u}\ \,+\sum_{b,c\in A}\Y^{\tddeuxa{$b$}{$c$}}_s\ \,\X^{\tdddeuxa{$a$}{$b$}{$c$}}_{s,u}\, \Big)\\
&\ -\Big(\Y^{\etree}_u\X^{\bullet_a}_{u,t}+\sum_{b\in A}\Y^{\bullet_b}_u\X^{\tddeuxa{$a$}{$b$}}_{u,t}\ \,+\sum_{b,c\in A}\Y^{\bullet_b\bullet_c}_u\X^{\tdtroisuna{$a$}{$c$}{$b$}}_{u,t}\ \,+\sum_{b,c\in A}\Y^{\tddeuxa{$b$}{$c$}}_u\ \,\X^{\tdddeuxa{$a$}{$b$}{$c$}}_{u,t}\, \Big)\Big|\\
=&\ \Big|\Big(\Y^{\etree}_s\X^{\bullet_a}_{s,t}+\sum_{b\in A}\Y^{\bullet_b}_s\X^{\tddeuxa{$a$}{$b$}}_{s,t}\ \,+\sum_{b,c\in A}\Y^{\bullet_b\bullet_c}_s\X^{\tdtroisuna{$a$}{$c$}{$b$}}_{s,t}\ \,+\sum_{b,c\in A}\Y^{\tddeuxa{$b$}{$c$}}_s\ \,\X^{\tdddeuxa{$a$}{$b$}{$c$}}_{s,t}\, \Big) \\
&\ -\Big(\Y^{\etree}_s\X^{\bullet_a}_{s,u}+\sum_{b\in A}\Y^{\bullet_b}_s\X^{\tddeuxa{$a$}{$b$}}_{s,u}\ \,+\sum_{b,c\in A}\Y^{\bullet_b\bullet_c}_s\X^{\tdtroisuna{$a$}{$c$}{$b$}}_{s,u}\ \,+\sum_{b,c\in A}\Y^{\tddeuxa{$b$}{$c$}}_s\ \,\X^{\tdddeuxa{$a$}{$b$}{$c$}}_{s,u}\, \Big)\\
&\ -\Big(\Y^{\etree}_u\X^{\bullet_a}_{u,t}+\sum_{b\in A}\Y^{\bullet_b}_u\X^{\tddeuxa{$a$}{$b$}}_{u,t}\ \,+\sum_{b,c\in A}\Y^{\bullet_b\bullet_c}_s\X^{\tdtroisuna{$a$}{$c$}{$b$}}_{u,t}\ \,+\sum_{b,c\in A}\Y^{\tddeuxa{$b$}{$c$}}_s\ \,\X^{\tdddeuxa{$a$}{$b$}{$c$}}_{u,t}\, \\
&\hspace{0.5cm}+\sum_{b,c\in A}\Y^{\bullet_b\bullet_c}_{s,u}\X^{\tdtroisuna{$a$}{$c$}{$b$}}_{u,t}\ \,+\sum_{b,c\in A}\Y^{\tddeuxa{$b$}{$c$}}_{s,u}\ \,\X^{\tdddeuxa{$a$}{$b$}{$c$}}_{u,t}\, \Big)\Big|.
\end{aligned}
\mlabel{eq:chen}
\end{equation}
It follows from Definition~\ref{def:pbrp}~(a) that
\begin{align*}
\langle \X_{s,t}, \bullet_a\rangle
=&\ \langle \X_{s,u}\star\X_{u,t}, \bullet_a\rangle  = \langle \X_{s,u}\otimes\X_{u,t}, \Delta_{\text {MKW}}(\bullet_a)\rangle \\
=&\ \langle \X_{s,u}\otimes\X_{u,t}, \bullet_a \otimes \etree+\etree \otimes \bullet_a\rangle
=  \langle \X_{s,u}, \bullet_a \rangle+\langle \X_{u,t}, \bullet_a \rangle,
\end{align*}
whence
\begin{equation}
\Y^{\etree}_s\X^{\bullet_a}_{s,t}-\Y^{\etree}_s\X^{\bullet_a}_{s,u}=\Y^{\etree}_s\X^{\bullet_a}_{u,t}.
\mlabel{eq:chen1}
\end{equation}
Similarly,
\begin{align}
\sum_{b\in A}\Y^{\bullet_b}_s\X^{\tddeuxa{$a$}{$b$}}_{s,t}\ \,-\sum_{b\in A}\Y^{\bullet_b}_s\X^{\tddeuxa{$a$}{$b$}}_{s,u}\ \,=&\ \sum_{b\in A}\Y^{\bullet_b}_s\X^{\tddeuxa{$a$}{$b$}}_{u,t}\ \,+\sum_{b\in A}\Y^{\bullet_b}_s\X^{\bullet_b}_{s,u}\X^{\bullet_a}_{u,t},\mlabel{eq:chen2}\\
\sum_{b,c\in A}\Y^{\bullet_b\bullet_c}_s\X^{\tdtroisuna{$a$}{$c$}{$b$}}_{s,t}\ \,-\sum_{b,c\in A}\Y^{\bullet_b\bullet_c}_s\X^{\tdtroisuna{$a$}{$c$}{$b$}}_{s,u}\ \,=&\ \sum_{b,c\in A}\Y^{\bullet_b\bullet_c}_s\Big(\X^{\tdtroisuna{$a$}{$c$}{$b$}}_{u,t}\ \,+\X^{\bullet_b}_{s,u}\X^{\tddeuxa{$a$}{$c$}}_{u,t}\ +\X^{\bullet_b\bullet_c}_{s,u}\X^{\bullet_a}_{u,t}\Big),
\mlabel{eq:chen3}\\
\sum_{b,c\in A}\Y^{\tddeuxa{$b$}{$c$}}_s\ \,\X^{\tdddeuxa{$a$}{$b$}{$c$}}_{s,t}\ \,-\sum_{b,c\in A}\Y^{\tddeuxa{$b$}{$c$}}_s\ \,\X^{\tdddeuxa{$a$}{$b$}{$c$}}_{s,u}\ \,=&\ \sum_{b,c\in A}\Y^{\tddeuxa{$b$}{$c$}}_s\ \,\Big(\X^{\tdddeuxa{$a$}{$b$}{$c$}}_{u,t}\ \,+\X^{\bullet_c}_{s,u}\X^{\tddeuxa{$a$}{$b$}}_{u,t}\ +\X^{\tddeuxa{$b$}{$c$}}_{s,u}\X^{\bullet_a}_{u,t}\Big).
\mlabel{eq:chen4}
\end{align}
Plugging (\ref{eq:chen1}), (\ref{eq:chen2}), (\ref{eq:chen3}) and (\ref{eq:chen4}) into the last equation of (\ref{eq:chen}), we obtain
\begin{align*}
|\tilde{Y}_{s,t}-\tilde{Y}_{s,u}-\tilde{Y}_{u,t}|
=&\ \Big|\Y^{\etree}_s\X^{\bullet_a}_{u,t}+\Big(\sum_{b\in A}\Y^{\bullet_b}_s\X^{\tddeuxa{$a$}{$b$}}_{u,t}\ \,+\sum_{b\in A}\Y^{\bullet_b}_s\X^{\bullet_b}_{s,u}\X^{\bullet_a}_{u,t} \Big)+\Big(\sum_{b,c\in A}\Y^{\bullet_b\bullet_c}_s\X^{\tdtroisuna{$a$}{$c$}{$b$}}_{u,t}\ \,+\sum_{b,c\in A}\Y^{\bullet_b\bullet_c}_s\X^{\bullet_b}_{s,u}\X^{\tddeuxa{$a$}{$c$}}_{u,t}\ \\
&\ +\sum_{b,c\in A}\Y^{\bullet_b\bullet_c}_s\X^{\bullet_b\bullet_c}_{s,u}\X^{\bullet_a}_{u,t} \Big)+\Big(\sum_{b,c\in A}\Y^{\tddeuxa{$b$}{$c$}}_s\ \,\X^{\tdddeuxa{$a$}{$b$}{$c$}}_{u,t}\ \,+\sum_{b,c\in A}\Y^{\tddeuxa{$b$}{$c$}}_s\ \,\X^{\bullet_c}_{s,u}\X^{\tddeuxa{$a$}{$b$}}_{u,t}\ +\sum_{b,c\in A}\Y^{\tddeuxa{$b$}{$c$}}_s\ \,\X^{\tddeuxa{$b$}{$c$}}_{s,u}\X^{\bullet_a}_{u,t}\Big)\\
&\ -\Big(\Y^{\etree}_u\X^{\bullet_a}_{u,t}+\sum_{b\in A}\Y^{\bullet_b}_u\X^{\tddeuxa{$a$}{$b$}}_{u,t}\ \,+\sum_{b,c\in A}\Y^{\bullet_b\bullet_c}_s\X^{\tdtroisuna{$a$}{$c$}{$b$}}_{u,t}\ \,+\sum_{b,c\in A}\Y^{\tddeuxa{$b$}{$c$}}_s\ \,\X^{\tdddeuxa{$a$}{$b$}{$c$}}_{u,t}\, \\
&\hspace{0.5cm}+\sum_{b,c\in A}\Y^{\bullet_b\bullet_c}_{s,u}\X^{\tdtroisuna{$a$}{$c$}{$b$}}_{u,t}\ \,+\sum_{b,c\in A}\Y^{\tddeuxa{$b$}{$c$}}_{s,u}\ \,\X^{\tdddeuxa{$a$}{$b$}{$c$}}_{u,t}\, \Big)\Big|\\
=&\ \Big|-\Big(\Y^{\etree}_{s,u}-\sum_{b\in A}\Y^{\bullet_b}_s\X^{\bullet_b}_{s,u}-\sum_{b,c\in A}\Y^{\bullet_b\bullet_c}_s\X^{\bullet_b\bullet_c}_{s,u}-\sum_{b,c\in A}\Y^{\tddeuxa{$b$}{$c$}}_s\ \,\X^{\tddeuxa{$b$}{$c$}}_{s,u} \Big)\X^{\bullet_a}_{u,t}\\
&\ -\sum_{b\in A}\Big(\Y^{\bullet_b}_{s,u}-\sum_{c\in A}\Y^{\bullet_b\bullet_c}_s\X^{\bullet_c}_{s,u}-\sum_{c\in A}\Y^{\tddeuxa{$b$}{$c$}}_s\ \,\X^{\bullet_c}_{s,u}\Big)\X^{\tddeuxa{$a$}{$b$}}_{u,t}\ \,-\sum_{b,c\in A}\Y^{\bullet_b\bullet_c}_{s,u}\X^{\tdtroisuna{$a$}{$c$}{$b$}}_{u,t}\ \, \\
&\ -\sum_{b,c\in A}\Y^{\tddeuxa{$b$}{$c$}}_{s,u}\ \,\X^{\tdddeuxa{$a$}{$b$}{$c$}}_{u,t}\,\Big|\\
=&\ \Big|-R\Y^{\etree}_{s,u}\X^{\bullet_a}_{u,t}-\sum_{b\in A}R\Y^{\bullet_b}_{s,u}\X^{\tddeuxa{$a$}{$b$}}_{u,t}\ \,-\sum_{b,c\in A}\Y^{\bullet_b\bullet_c}_{s,u}\X^{\tdtroisuna{$a$}{$c$}{$b$}}_{u,t}\ \,-\sum_{b,c\in A}\Y^{\tddeuxa{$b$}{$c$}}_{s,u}\ \,\X^{\tdddeuxa{$a$}{$b$}{$c$}}_{u,t}\,\Big|\\
&\ \hspace{8cm}(\text{by Definition~\mref{defn:cbrp1}})\\
\le&\ \Big|R\Y^{\etree}_{s,u}\Big|\,\Big|\X^{\bullet_a}_{u,t}\Big|+\sum_{b\in A}\Big|R\Y^{\bullet_b}_{s,u} \Big|\,\Big|\X^{\tddeuxa{$a$}{$b$}}_{u,t}\ \, \Big|+\sum_{b,c\in A}\Big|\Y^{\bullet_b\bullet_c}_{s,u} \Big|\,\Big|\X^{\tdtroisuna{$a$}{$c$}{$b$}}_{u,t}\ \, \Big|+\sum_{b,c\in A}\Big|\Y^{\tddeuxa{$b$}{$c$}}_{s,u}\ \, \Big|\,\Big|\X^{\tdddeuxa{$a$}{$b$}{$c$}}_{u,t}\, \Big|\\
&\ \hspace{6cm}(\text{by the triangle inequality})\\
\le&\ C_{1,1}|u-s|^{3\alpha}|t-u|^{\alpha}+\sum_{i=1}^{d}C_{2,i}|u-s|^{2\alpha}|t-u|^{2\alpha}+\sum_{i=1}^{d^2}C_{3,i}|u-s|^{\alpha}|t-u|^{3\alpha}\\
&\ +\sum_{i=1}^{d^2}C_{4,i}|u-s|^{\alpha}|t-u|^{3\alpha} \hspace{0.5cm}(\text{by Definitions~\ref{def:pbrp}, \ref{defn:cbrp1} and Remark~\ref{re:ieq}})\\
\le&\ C_{1,1}|t-s|^{4\alpha}+\sum_{i=1}^{d}C_{2,i}|t-s|^{4\alpha}+\sum_{i=1}^{d^2}C_{3,i}|t-s|^{4\alpha}+\sum_{i=1}^{d^2}C_{4,i}|t-s|^{4\alpha}\\
=:&\ C|t-s|^{4\alpha},
\end{align*}
where $C_{i,j}, C\in \RR$.
Since $4\alpha >4\times \frac{1}{4}=1 $, $\tilde{Y}$ satisfies (\ref{eq:seweq}), as required.
\end{proof}

\begin{remark}
With the setting in Theorem~\ref{thm:inte1}, (\ref{eq:ieq1}) can be rewritten as
\[
\Big|\displaystyle\int_s^t Y_rdX_{r}^{a}-\sum_{\tau\in \mathcal{F}^{\leq 2}}\Y_{s}^\tau\X_{s,t}^{[\tau]_a}\Big|\leq C | t-s | ^{4\alpha}
\]
for some constant $C\in \RR$. Since $\fan{\Y} _{\X;\alpha }$ and $\fan{\X} _{\alpha }$ are constants in $\RR$, there is a constant $C_\alpha\in \RR$ such that
\begin{equation}
\Big|\displaystyle\int_0^t Y_rdX_{r}^{a}-\sum_{\tau\in \mathcal{F}^{\leq 2}}\Y^\tau_s\X^{[\tau]_{\bullet_a}}_{s,t}\Big|\le C_\alpha \fan{\Y} _{\X;\alpha }\fan{\X} _{\alpha }|t-s|^{4\alpha}.
\mlabel{eq:nt}
\end{equation}
\end{remark}

Similarly to the case of branched rough paths~\cite{Kel}, the path $ \R{$\Int_0^\bullet$} Y_rdX_{r}^{a}$ can be lifted to an $\X$-controlled planarly branched rough path
\begin{equation*}
\displaystyle\int_0^\bullet \Y_rdX_{r}^{a}:[0,T]\longrightarrow (\hmsNd)^n, \quad t\mapsto \displaystyle\int_0^t \Y_rdX_{r}^{a}
\end{equation*}
by setting, for $\tau\in \f^{\le 2}$ and $\tau _i\in \calt^{\le 3}$,
\begin{equation}
\langle\etree,\displaystyle\int_0^t\Y_{r}dX_{r}^{a}\rangle:=\int_0^tY_{r}dX_{r}^{a}, \quad
\langle [\tau]_{a},\displaystyle\int_0^t\Y_{r} dX_{r}\rangle:=\ \langle \tau,\Y_{t}\rangle, \quad
\langle \tau _1\cdots \tau _n,\displaystyle\int_0^t\Y_{r} dX_{r}\rangle:= 0.
\mlabel{eq:inte2}
\end{equation}
Now we turn to the integral $\R{$\Int_0^t$}{F(\Y_{r})\cdot dX_{r}}$. Let $\X:[0,T]^2\rightarrow \hesnt$ be an $\alpha$-H\"{o}lder planarly branched path above $X$.
Let
$$Z=(Z^1,\ldots,Z^d):[0,T]\rightarrow (\mathbb{R}^n)^d,\quad \Z=(\Z^{1}, \ldots, \Z^{d}):[0,T]\rightarrow \Big((\hmsNd)^n \Big)^d,$$
where each $\Z^{i}:[0,T]\rightarrow (\hmsNd)^n$ is an $\X$-controlled planarly branched rough path above $Z^i$ for $i=1, \ldots, d$. Define a path
\[
\displaystyle\int_0^\bullet Z_r\cdot dX_r:[0, T] \rightarrow \RR^n, \quad t\mapsto \displaystyle\int_0^t Z_r\cdot dX_r:=\sum_{i=1}^{d}\displaystyle\int_0^t Z_r^{i}dX_r^{i}.
\]
Here each $\R{$\Int_0^t$} Z_r^{i}dX_r^{i}$ in $\RR^n$ is given in Theorem~\ref{thm:inte1}.
In analogy to the case of branched rough paths~\cite{Kel}, the path $\R{$\Int_0^\bullet$} Z_r\cdot dX_r$ can be lifted to an $\X$-controlled planarly branched rough path
$$\displaystyle\int_0^\bullet\Z_r\cdot dX_r:[0,T]\rightarrow (\hmsNd)^n,\quad t\mapsto \displaystyle\int_0^t\Z_r \cdot dX_r,$$
where
\begin{equation}
\begin{aligned}
\langle\etree,\displaystyle\int_0^t\Z_{r}\cdot dX_{r}\rangle:=&
 \sum_{i=1}^{d}\langle\etree,\displaystyle\int_0^t\Z_{r}^{i}dX_{r}^{i}\rangle=\sum_{i=1}^{d}
 \displaystyle\int_0^tZ_{r}^{i}dX_{r}^{i} \in\RR^n,\\
\langle [\tau]_{a},\displaystyle\int_0^t\Z_{r}\cdot dX_{r}\rangle:=&\ \langle \tau,\Z_{t}^{a}\rangle\in\RR^n, \quad
\langle \tau _1\cdots \tau _n,\displaystyle\int_0^t\Z_{r}\cdot dX_{r}\rangle:= 0\in\RR^n, \quad \forall \tau\in \f^{\le 2}, \tau _i\in \calt^{\le 3}.
\end{aligned}
\mlabel{eq:inte2}
\end{equation}
Here the pairing in given in  ~\meqref{eq:pairy}.
\subsection{Universal limit theorem of planarly branched rough paths}\label{sec3.2}
Consider the RODE
\begin{equation*}
\left\{
\begin{array}{rll}
dY_{t}&={F}(Y_{t})\cdot dX_t=\sum\limits_{i=1}^{d}{F^i}(Y_{t}) dX_{t}^{i}, \quad \forall t\in [0, T],\\
Y_0 &= \xi  ,
   \end{array}\right.
\end{equation*}
which is understood by interpreting it as the integral equation
\begin{equation}
\left\{
\begin{array}{rll}
Y_{t} - Y_s& =\displaystyle\int_s^t{F(Y_{r})\cdot dX_{r}} ,\quad \forall s,t\in [0, T] ,\\
Y_0 &= \xi.
\end{array}\right.
\mlabel{eq:RDE}
\end{equation}
Solutions to~(\mref{eq:RDE}) are defined by lifting the problem to the space of $\X$-controlled planarly branched rough paths.

\begin{definition}
Let $\X:[0,T]^2\rightarrow \hesnt$ be an $\alpha$-H\"{o}lder planarly branched rough path above $X$. A path $Y:[0,T]\rightarrow \mathbb{R}^n$ with $Y_0 = \xi$ is called {\bf a solution} to  ~(\mref{eq:RDE}) if there is an $\X$-controlled planarly branched rough path $\Y:[0,T]\rightarrow (\hmsNd)^n $ above $Y$ such that
\begin{equation}
\Y_{t} - \Y_s =\displaystyle\int_s^t{F(\Y_{r})\cdot dX_{r}} , \quad \forall s,t\in [0,T].
\mlabel{eq:BRDE}
\end{equation}
Here
$$F(\Y)=(F^1(\Y), \ldots, F^d(\Y)):[0,T]\rightarrow \Big((\hmsNd)^n\Big)^d  $$ and each $F^i(\Y): [0,T]\rightarrow (\hmsNd)^n$ above $F^i(Y)$ is given in Proposition~\mref{pp:regu} for $i=1, \ldots, d$.
\mlabel{defn:bRDE}
\end{definition}

\begin{remark}
In  ~(\ref{eq:BRDE}), take s=0 and then $\Y_{t} - \Y_0 =\R{$\Int_0^t$}{F(\Y_{r})\cdot dX_{r}}$. Set $ \W:= \Y- \R{$\Int_0^\bullet$}{F(\Y_{r})\cdot dX_{r}}$, that is,
$$\W:[0,T]\rightarrow (\hmsNd)^n,\quad t\mapsto \Y_{t} - \displaystyle\int_0^t{F(\Y_{r})\cdot dX_{r}}
=\Y_0.$$
Since $\Y$ and $\R{$\Int_0^\bullet$}{F(\Y_{r})\cdot dX_{r}}$ are $\X$-controlled planarly branched rough paths in $\cbrpxd$ and $\cbrpxd$ is a Banach space, we have $\W \in \cbrpxd$.
\mlabel{rk:conty}
\end{remark}
According to  ~(\ref{eq:BRDE}), we defined a map
\begin{equation}
\mathcal{M}:\cbrpxd \rightarrow \cbrpxd, \quad \Y\mapsto \Y_{0}+\displaystyle\int_0^\bullet{F(\Y_{r})\cdot dX_{r}}.
\mlabel{eq:contram}
\end{equation}

We will apply the fixed point theorem in the Banach space
$(\cbrpxd, \fan{\cdot}_{\X;\alpha})$.
Assume that all paths discussed below are defined on $[0, \delta]$, where
$\delta < T$ is a sufficiently small positive number. As a preparation, let us establish an upper bound for $\mathcal{M}(Y)$.

\begin{lemma}
For $\alpha\in (\frac{1}{4},\frac{1}{3}]$, $\X\in \brpt$, $\Y\in \cbrpxt$ and  $\Y_0$ satisfied Definition ~\ref{defn:bRDE}, we have
$$\fan{\mathcal{M}\Y} _{\X;\alpha }\le C_{\alpha}\Big(\sum_{i=1}^{d}\|F^i\|_{C_{b}^{3}}\Big)\Bigg(\delta^{\alpha}M\Big(\delta, \sum_{a\in A}|\Y_{0}^{\bullet_a}|, \sum_{a,b\in A}|\Y_{0}^{\bullet_a\bullet_b}|, \sum_{a,b\in A}| \Y_{0}^{\tddeuxa{$a$}{$b$}}\ \,|, \fan{\Y} _{\X;\alpha }, \fan{\X} _{\alpha } \Big)+\fan{\X}_{\alpha } \Bigg)+4\fan{\W} _{\X;\alpha },$$
where $C_{\alpha}, \fan{\W} _{\X;\alpha }\in \RR$.
\mlabel{lem:jstab1}
\end{lemma}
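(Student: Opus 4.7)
The plan is to decompose $\fan{\mathcal{M}\Y}_{\X;\alpha}$ according to~\meqref{eq:normy} into the four pieces indexed by $\tau\in\{\etree,\bullet_a,\bullet_a\bullet_b,\tddeuxa{$a$}{$b$}\}$ and bound each one separately. I would begin by unpacking $\mathcal{M}\Y$ coordinate-wise using the definition~\meqref{eq:contram} together with the description~\meqref{eq:inte2} of the lifted rough integral applied to $\Z=F(\Y)$. This yields $(\mathcal{M}\Y)^{\etree}_t = \Y_0^{\etree} + \sum_i\int_0^t F^i(Y_r)\,dX_r^i$, $(\mathcal{M}\Y)^{\bullet_a}_t = \Y_0^{\bullet_a} + F^a(Y_t)$, $(\mathcal{M}\Y)^{\tddeuxa{$a$}{$b$}}_t = \Y_0^{\tddeuxa{$a$}{$b$}} + F^a(\Y)^{\bullet_b}_t$, and $(\mathcal{M}\Y)^{\bullet_a\bullet_b}_t = \Y_0^{\bullet_a\bullet_b}$; the last is $t$-independent, so $R(\mathcal{M}\Y)^{\bullet_a\bullet_b}$ vanishes identically and contributes nothing.

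The $\etree$-piece is the most involved. The main tool is the sewing estimate~\meqref{eq:nt}, written for each $i$ as $\int_s^t F^i(Y_r)\,dX_r^i = \sum_{\sigma\in\calF^{\le 2}}F^i(\Y)^{\sigma}_s\X^{[\sigma]_i}_{s,t}+O_i$ with $|O_i|\le C_\alpha\fan{F^i(\Y)}_{\X;\alpha}\fan{\X}_\alpha|t-s|^{4\alpha}$. Substituting this into the formula for $R(\mathcal{M}\Y)^{\etree}_{s,t}$, the contributions $F^a(Y_s)\X^{\bullet_a}_{s,t}$ and $\sum_b F^a(\Y)^{\bullet_b}_s\X^{\tddeuxa{$a$}{$b$}}_{s,t}$ cancel exactly against the explicit $\bullet_a$- and $\tddeuxa{$a$}{$b$}$-components of $\mathcal{M}\Y$. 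The surviving contributions are: the sewing residuals $O_i$ of order $|t-s|^{4\alpha}$; the length-three tree $\X$-evaluations weighted by $F^i(\Y)^{\bullet_b\bullet_c}_s$ and $F^i(\Y)^{\tddeuxa{$b$}{$c$}}_s$ (each of order $|t-s|^{3\alpha}$); and the initial-data terms $-\sum_\sigma \Y_0^\sigma\X^\sigma_{s,t}$ for $\sigma$ ranging over the non-trivial trees in $\calF^{\le 2}$. Dividing by $|t-s|^{3\alpha}$, the sewing residuals gain a factor $\delta^\alpha$ on $[0,\delta]$, the length-three tree terms produce a factor $\fan{\X}_\alpha$, and Theorem~\mref{thm:stab1} converts each $\fan{F^i(\Y)}_{\X;\alpha}$ into $4d^2\|F^i\|_{C_b^3}M(\cdots)$; together these produce the contribution $C_\alpha(\sum_i\|F^i\|_{C_b^3})(\delta^\alpha M(\cdots)+\fan{\X}_\alpha)$. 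The initial-data piece is absorbed into $\|R\W^{\etree}\|_{3\alpha}$ via the identity $\mathcal{M}\Y=\Y_0+\Y-\W$ from Remark~\mref{rk:conty} and the linearity of the remainder formula~\meqref{eq:remain} in its second argument.

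The remaining non-trivial pieces $\|R(\mathcal{M}\Y)^{\bullet_a}\|_{2\alpha}$ and $\|R(\mathcal{M}\Y)^{\tddeuxa{$a$}{$b$}}\ \,\|_{\alpha}$ follow the same template with less effort. Unravelling the definitions shows that $R(\mathcal{M}\Y)^{\bullet_a}_{s,t}$ equals $RF^a(\Y)^{\etree}_{s,t}$ up to an initial-data error absorbed into $\|R\W^{\bullet_a}\|_{2\alpha}$; since $RF^a(\Y)^{\etree}=O(|t-s|^{3\alpha})$ by Definition~\mref{defn:cbrp1}, dividing by $|t-s|^{2\alpha}$ gains a factor $\delta^\alpha$, and Theorem~\mref{thm:stab1} supplies the polynomial dependence on the data. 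An entirely parallel argument yields the bound for $R(\mathcal{M}\Y)^{\tddeuxa{$a$}{$b$}}_{s,t}=F^a(\Y)^{\bullet_b}_{s,t}$. Summing the four estimates and absorbing the remaining constants into $M$ yields the claimed inequality, with the additive $4\fan{\W}_{\X;\alpha}$ accounting for the four $R\W^\tau$-contributions, one per piece of the decomposition.

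The main obstacle is the bookkeeping in the $\etree$-step: the sewing estimate only controls the integral up to $|t-s|^{4\alpha}$, and this must be organised into three distinct contributions -- the $\delta^\alpha$-gain from the over-regular residual, the $\fan{\X}_\alpha$-factor coming from length-three trees, and the $\|R\W^\etree\|_{3\alpha}$ matching of initial-data leftovers -- with every cross-term traced to its correct component of $R\W^\tau$. Once this careful accounting is in place, the other three pieces reduce to direct applications of Theorem~\mref{thm:stab1}.
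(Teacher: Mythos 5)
Your proposal follows essentially the same route as the paper: the decomposition of $\fan{\mathcal{M}\Y}_{\X;\alpha}$ via~\eqref{eq:normy}, the identification of the components of $\mathcal{M}\Y$ from~\eqref{eq:contram} and~\eqref{eq:inte2}, the sewing bound~\eqref{eq:nt} combined with Theorem~\ref{thm:stab1} and the level-three tree terms producing the $\delta^{\alpha}M(\cdots)$ and $\fan{\X}_{\alpha}$ contributions, and the identification of the initial-data terms with $R\W^{\tau}$ for the constant controlled path $\W$ (the paper only writes out the $\etree$-piece, declaring the rest ``similar and simpler''). One minor inaccuracy, which does not affect the result: $R(\mathcal{M}\Y)^{\bullet_a}_{s,t}$ is not merely $RF^a(\Y)^{\etree}_{s,t}$ plus an initial-data term, since it also contains $\sum_{b,c}F^a(\Y_s)^{\bullet_b\bullet_c}\X^{\bullet_b\bullet_c}_{s,t}+\sum_{b,c}F^a(\Y_s)^{\tddeuxa{$b$}{$c$}}\ \,\X^{\tddeuxa{$b$}{$c$}}_{s,t}$, but these are bounded by $\|F^a\|_{C_b^3}\fan{\X}_{\alpha}|t-s|^{2\alpha}$ and hence land inside the $\fan{\X}_{\alpha}$ term of the claimed estimate.
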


\begin{proof}
It follows from  ~(\ref{eq:normy}) that
$$\fan{\mathcal{M}\Y}_{\X;\alpha }= \|R\mathcal{M}\Y^{\etree }\|_{3\alpha}+\sum_{a\in A}\|R\mathcal{M}\Y^{\bullet_a}\|_{2\alpha }+\sum_{a,b\in A}\|R\mathcal{M}\Y^{\tddeuxa{$a$}{$b$}}\ \,\|_{\alpha }+\sum_{a,b\in A}\| R\mathcal{M}\Y^{\bullet_a\bullet_b}\|_{\alpha }. $$
We only estimate the first term $\|R\mathcal{M}\Y^{\etree }\|_{3\alpha}$ explicitly, as the others are similar and simpler. We have
\begin{align*}
|\mathcal{M}\Y_{s,t}^{\etree }|
=&\ \Big|\Big(\langle\etree, \Y_{0}\rangle+\langle\etree, \displaystyle\int_0^t{F(\Y_{r})\cdot dX_{r}}\rangle\Big)-\Big(\langle\etree, \Y_{0}\rangle+\langle\etree, \displaystyle\int_0^s{F(\Y_{r})\cdot dX_{r}}\rangle\Big)\Big|\hspace{1cm}(\text{by  ~(\ref{eq:contram})})\\
=&\ \Big|\langle\etree, \displaystyle\int_s^t{F(\Y_{r})\cdot dX_{r}}\rangle\Big|= \Big|\sum_{i=1}^{d} \langle\etree, \displaystyle\int_s^t{F^i(\Y_{r}) dX_{r}^i}\rangle\Big|\hspace{1cm}(\text{by  ~(\ref{eq:inte2})})\\
=&\ \Big|\sum_{i=1}^{d} \displaystyle\int_s^t{F^i(Y_{r}) dX_{r}^i}\Big|.\hspace{5cm}(\text{by  ~(\ref{eq:inte1})})
\end{align*}
This implies that
\begin{align}
|R\mathcal{M}\Y_{s,t}^{\etree }|
=&\ \Big|\mathcal{M}\Y^{\etree}_{s,t}-\Big(\sum_{a\in A}\mathcal{M}\Y_{s} ^{\bullet_a}\X_{s,t} ^{\bullet_a}+\sum_{a,b\in A}\mathcal{M}\Y_{s} ^{\bullet_a\bullet_b} \X_{s,t} ^{\bullet_a\bullet_b}+\sum_{a,b\in A}\mathcal{M}\Y_{s} ^{\tddeuxa{$a$}{$b$}}\ \, \X_{s,t} ^{\tddeuxa{$a$}{$b$}}\ \Big)\Big| \hspace{0.5cm} (\text{by  ~(\ref{eq:cbrp1})})\mlabel{eq:wq1}\\
=&\ \Big|\sum_{i=1}^{d} \displaystyle\int_s^t{F^i(Y_{r}) dX_{r}^i}-\Bigg(\sum_{a\in A}\langle\bullet_a, \Y_0+\displaystyle\int_0^s{F(\Y_{r})\cdot dX_{r}}\rangle\X_{s,t} ^{\bullet_a}\nonumber\\
&\ +\sum_{a,b\in A}\langle\bullet_a\bullet_b, \Y_0+\displaystyle\int_0^t{F(\Y_{r})\cdot dX_{r}}\rangle\X_{s,t} ^{\bullet_a\bullet_b}+\sum_{a,b\in A}\langle\tddeuxa{$a$}{$b$}\ \,, \Y_0+\displaystyle\int_0^s{F(\Y_{r})\cdot dX_{r}}\rangle\X_{s,t} ^{\tddeuxa{$a$}{$b$}}\ \Bigg)\Big|\nonumber\\
&\ \hspace{8cm} (\text{by  ~(\ref{eq:contram})})\nonumber\\
=&\ \Bigg|\sum_{i=1}^{d} \displaystyle\int_s^t{F^i(Y_{r}) dX_{r}^i}-\Bigg(\sum_{a\in A}(\Y_0^{\bullet_a}+F^a(\Y_s)^{\etree})\X_{s,t} ^{\bullet_a}+\sum_{a,b\in A}(\Y_0^{\bullet_a\bullet_b}+0)\X_{s,t} ^{\bullet_a\bullet_b}\nonumber\\
&\ \hspace{3.8cm}+\sum_{a,b\in A}(\Y_0^{\tddeuxa{$a$}{$b$}}\ \,+F^a(\Y_s)^{\bullet_b})\X_{s,t} ^{\tddeuxa{$a$}{$b$}}\ \Bigg)\Bigg|\hspace{1cm} (\text{by  ~(\ref{eq:inte2})})\nonumber\\
=&\ \Bigg|\sum_{a=1}^{d} \displaystyle\int_s^t{F^a(Y_{r}) dX_{r}^a}-\Big(\sum_{a\in A}F^a(\Y_s)^{\etree}\X_{s,t} ^{\bullet_a}+\sum_{a,b\in A}F^a(\Y_s)^{\bullet_b}\X_{s,t} ^{\tddeuxa{$a$}{$b$}}\ \Big)\nonumber\\
&\ -\Big(\sum_{a\in A}\Y_0^{\bullet_a}\X_{s,t} ^{\bullet_a}+\sum_{a,b\in A}\Y_0^{\bullet_a\bullet_b}\X_{s,t} ^{\bullet_a\bullet_b}+\sum_{a,b\in A}\Y_0^{\tddeuxa{$a$}{$b$}}\ \,\X_{s,t} ^{\tddeuxa{$a$}{$b$}}\ \Big)\Bigg|\nonumber\\
\le&\ \sum_{a=1}^{d}\Big| \displaystyle\int_s^t{F^a(Y_{r}) dX_{r}^a}-\Big(F^a(\Y_s)^{\etree}\X_{s,t} ^{\bullet_a}+\sum_{b\in A}F^a(\Y_s)^{\bullet_b}\X_{s,t} ^{\tddeuxa{$a$}{$b$}}\ \Big)\Big|\nonumber\\
&\ +\Big|\Big(\sum_{a\in A}\Y_0^{\bullet_a}\X_{s,t} ^{\bullet_a}+\sum_{a,b\in A}\Y_0^{\bullet_a\bullet_b}\X_{s,t} ^{\bullet_a\bullet_b}+\sum_{a,b\in A}\Y_0^{\tddeuxa{$a$}{$b$}}\ \,\X_{s,t} ^{\tddeuxa{$a$}{$b$}}\ \Big)\Big|.\hspace{0.5cm}(\text{by the triangle inequality})\nonumber
\end{align}
Introducing the two following expressions:
\begin{align*}
I_1:=&\ \sum_{a=1}^{d}\Big| \displaystyle\int_s^t{F^a(Y_{r}) dX_{r}^a}-\Big(F^a(\Y_s)^{\etree}\X_{s,t} ^{\bullet_a}+\sum_{b\in A}F^a(\Y_s)^{\bullet_b}\X_{s,t} ^{\tddeuxa{$a$}{$b$}}\ \Big)\Big|,\\
I_2:=&\ \Big|\Big(\sum_{a\in A}\Y_0^{\bullet_a}\X_{s,t} ^{\bullet_a}+\sum_{a,b\in A}\Y_0^{\bullet_a\bullet_b}\X_{s,t} ^{\bullet_a\bullet_b}+\sum_{a,b\in A}\Y_0^{\tddeuxa{$a$}{$b$}}\ \,\X_{s,t} ^{\tddeuxa{$a$}{$b$}}\ \Big)\Big|.
\end{align*}
We have
\begin{align*}
I_1=&\ \sum_{a=1}^{d}\Big|\displaystyle\int_s^t{F^a(Y_{r}) dX_{r}^a}-\Bigg(F^a(\Y_s)^{\etree}\X_{s,t} ^{\bullet_a}+\sum_{b\in A}F^a(\Y_s)^{\bullet_b}\X_{s,t} ^{\tddeuxa{$a$}{$b$}}\ +\sum_{b,c\in A}F^a(\Y_s)^{\bullet_b\bullet_c}\X_{s,t}^{\tdtroisuna{$a$}{$c$}{$b$}}\\
&\ +\sum_{b,c\in A}F^a(\Y_s)^{\tddeuxa{$b$}{$c$}}\ \,\X_{s,t} ^{\tdddeuxa{$a$}{$b$}{$c$}}\ \,\Bigg)+\Bigg(\sum_{b,c\in A}F^a(\Y_s)^{\bullet_b\bullet_c}\X_{s,t}^{\tdtroisuna{$a$}{$c$}{$b$}}+\sum_{b,c\in A}F^a(\Y_s)^{\tddeuxa{$b$}{$c$}}\ \,\X_{s,t} ^{\tdddeuxa{$a$}{$b$}{$c$}}\ \,\Bigg)\Big|\\
%
%
\le&\ \sum_{a=1}^{d}\Big|\R{$\Int_s^t$}{F^a(Y_{r}) dX_{r}^a}-\Big(F^a(\Y_s)^{\etree}\X_{s,t} ^{\bullet_a}+\sum_{b\in A}F^a(\Y_s)^{\bullet_b}\X_{s,t} ^{\tddeuxa{$a$}{$b$}}\ +\sum_{b,c\in A}F^a(\Y_s)^{\bullet_b\bullet_c}\X_{s,t}^{\tdtroisuna{$a$}{$c$}{$b$}}\\
&\ +\sum_{b,c\in A}F^a(\Y_s)^{\tddeuxa{$b$}{$c$}}\ \,\X_{s,t} ^{\tdddeuxa{$a$}{$b$}{$c$}}\ \,\Big)\Big|+\sum_{a=1}^{d}\Big|\Big(\sum_{b,c\in A}F^a(\Y_s)^{\bullet_b\bullet_c}\X_{s,t}^{\tdtroisuna{$a$}{$c$}{$b$}}+\sum_{b,c\in A}F^a(\Y_s)^{\tddeuxa{$b$}{$c$}}\ \,\X_{s,t} ^{\tdddeuxa{$a$}{$b$}{$c$}}\ \,\Big)\Big|\\
&\ \hspace{7cm}(\text{by the triangle inequality})\\
\le&\ \sum_{a=1}^{d}C_\alpha\fan{F^a(\Y)}_{\X;\alpha }\fan{\X}_{\alpha }|t-s|^{4\alpha}\\
&\ +\sum_{a=1}^{d}\sum_{b,c\in A}\Big(|F^a(\Y_s)^{\bullet_b\bullet_c}|\,\|\X^{\tdtroisuna{$a$}{$c$}{$b$}\ }\|_{3\alpha}|t-s|^{3\alpha}+|F^a(\Y_s)^{\tddeuxa{$b$}{$c$}}\ \,|\,\|\X^{\tdddeuxa{$a$}{$b$}{$c$}}\ \,\|_{3\alpha}|t-s|^{3\alpha} \Big).\\
&\ \hspace{7cm}(\text{by  ~(\ref{eq:normx}) and ~(\ref{eq:nt})})
\end{align*}
Let us pause for a moment to handle the second summand. It follows from~\meqref{eq:zcbrp1} that
\begin{equation}
F^a(\Y)_{s}^{\tddeuxa{$b$}{$c$}}\ \,= (F^a)'(\Y^{\etree}_{s})\Y^{\tddeuxa{$b$}{$c$}}_{s}\ \,,\quad F^a(\Y)_{s}^{\bullet_b\bullet_c}= (F^a)'(\Y^{\etree}_{s})\Y^{\bullet_b\bullet_c}_{s}+(F^a)''(\Y^{\etree}_{s})
\Y^{\bullet_b}_{s}\Y^{\bullet_c}_{s},
\mlabel{eq:myw1}
\end{equation}
Here
$(F^a)'(\Y^{\etree}_{s}):\mathbb{R}^n \longrightarrow  \mathbb{R}^n$ and
$(F^a)''(\Y^{\etree}_{s}):\mathbb{R}^n \otimes \mathbb{R}^n \longrightarrow  \mathbb{R}^n
$
are Fr$\acute{\text{e}}$chet derivatives.
We have
\begin{align*}
&\ \sum_{a=1}^{d}\sum_{b,c\in A}\Big(|F^a(\Y_s)^{\bullet_b\bullet_c}|\,\|\X^{\tdtroisuna{$a$}{$c$}{$b$}\ }\|_{3\alpha}|t-s|^{3\alpha}+|F^a(\Y_s)^{\tddeuxa{$b$}{$c$}}\ \,|\,\|\X^{\tdddeuxa{$a$}{$b$}{$c$}}\ \,\|_{3\alpha}|t-s|^{3\alpha} \Big)\hspace{2cm}\\
=&\  \sum_{a=1}^{d}\sum_{b,c\in A}\Big(|(F^a)'(\Y^{\etree}_{s})\Y^{\bullet_b\bullet_c}_{s}+(F^a)''(\Y^{\etree}_{s})\Y^{\bullet_b}_{s}\Y^{\bullet_c}_{s}|\,\|\X^{\tdtroisuna{$a$}{$c$}{$b$}\ }\|_{3\alpha}|t-s|^{3\alpha}+|(F^a)'(\Y^{\etree}_{s})\Y^{\tddeuxa{$b$}{$c$}}_{s}\ \,|\,\|\X^{\tdddeuxa{$a$}{$b$}{$c$}}\ \,\|_{3\alpha}|t-s|^{3\alpha} \Big)\nonumber\\
&\ \hspace{11cm} (\text{by  ~(\ref{eq:myw1})})\nonumber\\
\le&\  \sum_{a=1}^{d}\sum_{b,c\in A}\bigg(\Big(|(F^a)'(\Y^{\etree}_{s})\Y^{\bullet_b\bullet_c}_{s}|+|(F^a)''(\Y^{\etree}_{s})\Y^{\bullet_b}_{s}\Y^{\bullet_c}_{s}|\Big)\|\X^{\tdtroisuna{$a$}{$c$}{$b$}\ }\|_{3\alpha}|t-s|^{3\alpha}+|(F^a)'(\Y^{\etree}_{s})\Y^{\tddeuxa{$b$}{$c$}}_{s}\ \,|\,\|\X^{\tdddeuxa{$a$}{$b$}{$c$}}\ \,\|_{3\alpha}|t-s|^{3\alpha} \bigg)\nonumber\\
\le&\ \sum_{a=1}^{d}\sum_{b,c\in A}\Big(2\|F^a\|_{C_{b}^{3}}\|\X^{\tdtroisuna{$a$}{$c$}{$b$}\ }\|_{3\alpha}|t-s|^{3\alpha}+\|F^a\|_{C_{b}^{3}}\|\X^{\tdddeuxa{$a$}{$b$}{$c$}}\ \,\|_{3\alpha}|t-s|^{3\alpha} \Big)\hspace{1cm} (\text{by ~\eqref{eq:infF} and~\eqref{eq:fcbn}})\nonumber\\
=&\ \sum_{a=1}^{d}\|F^a\|_{C_{b}^{3}}\sum_{b,c\in A}\bigg(2\|\X^{\tdtroisuna{$a$}{$c$}{$b$}\ }\|_{3\alpha}+\|\X^{\tdddeuxa{$a$}{$b$}{$c$}}\ \,\|_{3\alpha}\bigg)|t-s|^{3\alpha}\nonumber\\
\le&\ 2\sum_{a=1}^{d}\|F^a\|_{C_{b}^{3}}\sum_{b,c\in A}\bigg(\|\X^{\tdtroisuna{$a$}{$c$}{$b$}\ }\|_{3\alpha}+\|\X^{\tdddeuxa{$a$}{$b$}{$c$}}\ \,\|_{3\alpha}\bigg)|t-s|^{3\alpha}\nonumber\\
\le&\ 2\sum_{a=1}^{d}\Big(\|F^a\|_{C_{b}^{3}}\Big)\fan{\X}_{\alpha }|t-s|^{3\alpha},\hspace{0.5cm}(\text{by ~(\ref{eq:distx})})\nonumber
\end{align*}
which together with the above bound about $I_1$ implies that
\begin{align}
&\dfrac{I_1}{|t-s|^{3\alpha}}\hspace{10cm}\mlabel{eq:jstab0eq}\\
\le&\ \sum_{a=1}^{d}C_{\alpha}\delta ^{\alpha}\fan{F^a(\Y)}_{\X;\alpha }\fan{\X}_{\alpha }+2\sum_{a=1}^{d}\Big(\|F^a\|_{C_{b}^{3}}\Big)\fan{\X}_{\alpha }\hspace{0.5cm}(\text{by $0<s<t<\delta$})\nonumber\\
=&\ \sum_{i=1}^{d}C_{\alpha}\delta ^{\alpha}\fan{F^i(\Y)}_{\X;\alpha }\fan{\X}_{\alpha }+2\sum_{i=1}^{d}\Big(\|F^i\|_{C_{b}^{3}}\Big)\fan{\X}_{\alpha }\hspace{0.5cm}(\text{by replacing $a$ with $i$\ })\nonumber\\
\le&\  \sum_{i=1}^{d}C_{\alpha}\delta^{\alpha}4d^2\|F^i\|_{C_{b}^{3}} M\Big(\delta, \sum_{a\in A}|\Y_{0}^{\bullet_a}|, \sum_{a,b\in A}|\Y_{0}^{\bullet_a\bullet_b}|, \sum_{a,b\in A}| \Y_{0}^{\tddeuxa{$a$}{$b$}}\ \,|, \fan{\Y} _{\X;\alpha }, \fan{\X} _{\alpha } \Big)+2\sum_{a=1}^{d}\Big(\|F^a\|_{C_{b}^{3}}\Big)\fan{\X}_{\alpha }\nonumber\\
&\ \hspace{8cm}(\text{by Theorem~\ref{thm:stab1}})\nonumber\\
\le&\ (4C_{\alpha}d^2+2)\Big(\sum_{i=1}^{d}\|F^i\|_{C_{b}^{3}}\Big)\Bigg(\delta^{\alpha}M\Big(\delta, \sum_{a\in A}|\Y_{0}^{\bullet_a}|, \sum_{a,b\in A}|\Y_{0}^{\bullet_a\bullet_b}|, \sum_{a,b\in A}| \Y_{0}^{\tddeuxa{$a$}{$b$}}\ \,|, \fan{\Y} _{\X;\alpha }, \fan{\X} _{\alpha } \Big)+\fan{\X}_{\alpha } \Bigg)\nonumber\\
&\ \hspace{7cm}(\text{each above item appears in below})\nonumber\\
=:&\ C_{\alpha}\Big(\sum_{i=1}^{d}\|F^i\|_{C_{b}^{3}}\Big)\Bigg(\delta^{\alpha}M\Big(\delta, \sum_{a\in A}|\Y_{0}^{\bullet_a}|, \sum_{a,b\in A}|\Y_{0}^{\bullet_a\bullet_b}|, \sum_{a,b\in A}| \Y_{0}^{\tddeuxa{$a$}{$b$}}\ \,|, \fan{\Y} _{\X;\alpha }, \fan{\X} _{\alpha } \Big)+\fan{\X}_{\alpha } \Bigg).\nonumber
\end{align}

Moreover,
\begin{equation}
\begin{aligned}
\dfrac{I_2}{|t-s|^{3\alpha}}=&\ \dfrac{\Big|\W_{s,t}^\etree - \Big(\sum_{a\in A}\W_s^{\bullet_a}\X_{s,t} ^{\bullet_a}+\sum_{a,b\in A}\W_s^{\bullet_a\bullet_b}\X_{s,t} ^{\bullet_a\bullet_b}+\sum_{a,b\in A}\W_s^{\tddeuxa{$a$}{$b$}}\ \,\X_{s,t} ^{\tddeuxa{$a$}{$b$}}\ \Big)\Big|}{|t-s|^{3\alpha}} \hspace{1cm}(\text{by Remark~\ref{rk:conty} })\\
=&\ \dfrac{|R\W_{s,t}^\etree|}{|t-s|^{3\alpha}}\hspace{1cm}(\text{by  ~(\ref{eq:cbrp1})})\\
\le&\ \|R\W^\etree\|_{3\alpha}\hspace{1cm}(\text{by  ~(\ref{eq:remainry})})\\
\le&\ \fan{\W} _{\X;\alpha }\hspace{1cm}(\text{by  ~(\ref{eq:normy})})\\
<&\ \infty.  \hspace{1cm}(\text{by $\W$ being an $\X$-controlled planarly branched rough path})
\end{aligned}
\mlabel{eq:jstab1eq}
\end{equation}
Combining  ~\meqref{eq:jstab0eq} and~\meqref{eq:jstab1eq}, we conclude
\begin{align*}
&\ \|R\mathcal{M}\Y^{\etree}\|_{3\alpha } =\sup_{s\ne t\in [0,T]} \dfrac{|R\mathcal{M}\Y_{s,t}^{\etree }|}{|t-s|^{3\alpha}}\le \sup_{s\ne t\in [ 0,T]}\frac{I_1+I_2}{|t-s|^{3\alpha} }\le \sup_{s\ne t\in [ 0,T]}\frac{I_1}{|t-s|^{3\alpha} }+\sup_{s\ne t\in [ 0,T]}\frac{I_2}{|t-s|^{3\alpha} }\\
\le&\
C_{\alpha}\Big(\sum_{i=1}^{d}\|F^i\|_{C_{b}^{3}}\Big)\Bigg(\delta^{\alpha}M\Big(\delta, \sum_{a\in A}|\Y_{0}^{\bullet_a}|, \sum_{a,b\in A}|\Y_{0}^{\bullet_a\bullet_b}|, \sum_{a,b\in A}| \Y_{0}^{\tddeuxa{$a$}{$b$}}\ \,|, \fan{\Y} _{\X;\alpha }, \fan{\X} _{\alpha } \Big)+\fan{\X}_{\alpha } \Bigg)+\fan{\W} _{\X;\alpha }. \qedhere
\end{align*}
\end{proof}

The following result gives the continuity estimation of $\mathcal{M}\Y $.

\begin{lemma}
Let $\alpha\in (\frac{1}{4},\frac{1}{3}]$, $\X \in \brpt $, $\Y, \tilde{\Y}\in \cbrpxt$ with $\Y_{0} =\tilde\Y_{0}$. Then
\begin{align*}
&\fan{\mathcal{M}\Y,\mathcal{M}\tilde\Y}_{\X, \X; \alpha}\\
\le&\  C_{\alpha}\delta^{\alpha}\Big(\sum_{i=1}^{d}\|F^i\|_{C_{b}^{3}}\Big)M\Big(\delta, \sum_{a\in A}|\Y_{0}^{\bullet_a}|, \sum_{a,b\in A}|\Y_{0}^{\bullet_a\bullet_b}|, \sum_{a,b\in A}| \Y_{0}^{\tddeuxa{$a$}{$b$}}\ \,|, \fan{\Y} _{\X;\alpha }, \fan{\tilde\Y} _{\X;\alpha},\fan{\X} _{\alpha}\Big)\fan{\Y,\tilde{\Y} }_{\X,\tilde{\X};\alpha},
\end{align*}
where $C_{\alpha}\in \RR$.
\mlabel{lem:jstab2}
\end{lemma}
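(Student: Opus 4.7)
The plan is to run the proof of Lemma~\ref{lem:jstab1} in parallel for $\Y$ and $\tilde\Y$ and then subtract. The crucial simplification is that since $\Y_0=\tilde\Y_0$, by~\eqref{eq:contram} and~\eqref{eq:inte2} we have the identity of lifted planarly branched paths
\[
\mathcal M\Y_t-\mathcal M\tilde\Y_t=\displaystyle\int_0^t\bigl(F(\Y_r)-F(\tilde\Y_r)\bigr)\cdot dX_r,
\]
so the constant part $\W=\Y_0$ from Remark~\ref{rk:conty} cancels out of the difference. By the convention~\eqref{eq:dist}, $\fan{\mathcal M\Y,\mathcal M\tilde\Y}_{\X,\X;\alpha}=\fan{\mathcal M\Y-\mathcal M\tilde\Y}_{\X;\alpha}$, which we split into the four summands $\|R\mathcal M\Y^\etree-R\mathcal M\tilde\Y^\etree\|_{3\alpha}$, $\sum_a\|R\mathcal M\Y^{\bullet_a}-R\mathcal M\tilde\Y^{\bullet_a}\|_{2\alpha}$, $\sum_{a,b}\|R\mathcal M\Y^{\tddeuxa{$a$}{$b$}}\,-R\mathcal M\tilde\Y^{\tddeuxa{$a$}{$b$}}\,\|_\alpha$, $\sum_{a,b}\|R\mathcal M\Y^{\bullet_a\bullet_b}-R\mathcal M\tilde\Y^{\bullet_a\bullet_b}\|_\alpha$, and estimate each.

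For the $\etree$-piece, reproduce the calculation leading to~\eqref{eq:jstab0eq}, but with every occurrence of $F^i(\Y)$ replaced by $F^i(\Y)-F^i(\tilde\Y)$. The sewing estimate~\eqref{eq:nt}, applied to the $\X$-controlled planarly branched rough path $F^i(\Y)-F^i(\tilde\Y)$, yields
\[
\Bigl|\displaystyle\int_s^t(F^i(Y_r)-F^i(\tilde Y_r))\,dX^i_r-\sum_{\tau\in\mathcal F^{\le 2}}(F^i(\Y)^\tau_s-F^i(\tilde\Y)^\tau_s)\X^{[\tau]_i}_{s,t}\Bigr|\le C_\alpha\fan{F^i(\Y)-F^i(\tilde\Y)}_{\X;\alpha}\fan{\X}_\alpha|t-s|^{4\alpha},
\]
so division by $|t-s|^{3\alpha}$ produces the decisive factor $|t-s|^\alpha\le\delta^\alpha$. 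The residual length-$3$-tree terms $\X^{\tdtroisuna{$a$}{$c$}{$b$}}$, $\X^{\tdddeuxa{$a$}{$b$}{$c$}}$ carry prefactors $F^i(\Y)^\tau_s-F^i(\tilde\Y)^\tau_s$ with $|\tau|=2$, each vanishing at $s=0$ (since $\Y_0=\tilde\Y_0$), so $|F^i(\Y)^\tau_s-F^i(\tilde\Y)^\tau_s|\le s^\alpha\|(F^i(\Y)-F^i(\tilde\Y))^\tau\|_\alpha\le\delta^\alpha\fan{F^i(\Y),F^i(\tilde\Y)}_{\X,\X;\alpha}$. Apply Theorem~\ref{thm:stab2} with $\tilde\X=\X$ and $\Y_0=\tilde\Y_0$: the $\fan{\X,\tilde\X}_\alpha$ and $|\Y_0^\etree-\tilde\Y_0^\etree|$ contributions drop out, leaving the linear estimate
\[
\fan{F^i(\Y),F^i(\tilde\Y)}_{\X,\X;\alpha}\le 4d^2\|F^i\|_{C^3_b}M(\ldots)\fan{\Y,\tilde\Y}_{\X,\X;\alpha},
\]
which inserted above gives the bound for the $\etree$-piece.

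The three remaining pieces are handled by the same principle. For the $\bullet_a\bullet_b$-component, $(\mathcal M\Y)^{\bullet_a\bullet_b}_t=\Y_0^{\bullet_a\bullet_b}$ is constant by~\eqref{eq:inte2}, so the difference vanishes identically. For the $\tddeuxa{$a$}{$b$}$-component, $R\mathcal M\Y^{\tddeuxa{$a$}{$b$}}_{s,t}-R\mathcal M\tilde\Y^{\tddeuxa{$a$}{$b$}}_{s,t}$ reduces to the increment of $F^a(\Y)^{\bullet_b}-F^a(\tilde\Y)^{\bullet_b}$; expanding via the controlled-rough-path decomposition~\eqref{eq:cbrp2} applied to $F^a(\Y)^{\bullet_b}$ produces a remainder piece of H\"older regularity $2\alpha$ (yielding $\delta^\alpha$ on division by $|t-s|^\alpha$) together with pointwise differences $F^a(\Y)^{\bullet_b\bullet_c}_s-F^a(\tilde\Y)^{\bullet_b\bullet_c}_s$ and $F^a(\Y)^{\tddeuxa{$b$}{$c$}}\,_s-F^a(\tilde\Y)^{\tddeuxa{$b$}{$c$}}\,_s$ that vanish at $s=0$ and are therefore $\delta^\alpha$-small. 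The $\bullet_a$-component mirrors the calculation of $\|R\mathcal M\Y^{\bullet_a}\|_{2\alpha}$ in Lemma~\ref{lem:jstab1}, with every $\Y_0$-contribution cancelled and every remaining term admitting either a sewing-induced or a vanishing-at-$0$ gain of $\delta^\alpha$. Invoking Theorem~\ref{thm:stab2} in each case produces the final linear dependence on $\fan{\Y,\tilde\Y}_{\X,\X;\alpha}$; summing the four contributions gives the claimed inequality.

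The main obstacle will be the bookkeeping: unlike Lemma~\ref{lem:jstab1}, where the sewing lemma alone produced the $\delta^\alpha$-gain in the top component while the lower components were allowed to contribute the harmless constant $\fan{\W}_{\X;\alpha}$, here every one of the four summands must produce a $\delta^\alpha$ factor. The sewing gain covers the $\etree$-piece, but for the $\bullet_a$- and $\tddeuxa{$a$}{$b$}$-pieces one must systematically exploit the pointwise identity at $t=0$ (following from $\Y_0=\tilde\Y_0$) to convert sup-norms $\|G\|_\alpha$ into values $|G_s|\le\delta^\alpha\|G\|_\alpha$. Verifying that this trick dovetails with each of the intricate cross-terms generated by the expansions of $F^a(\Y)^\tau$ from~\eqref{eq:zcbrp1} is where the work lies.
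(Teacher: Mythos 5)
Your proposal is correct and follows essentially the same route as the paper: the cancellation coming from $\Y_0=\tilde\Y_0$, the sewing bound \eqref{eq:nt} applied to the controlled difference $F(\Y)-F(\tilde\Y)$, and Theorem~\ref{thm:stab2} with $\tilde\X=\X$ to produce the linear factor $\fan{\Y,\tilde{\Y}}_{\X,\X;\alpha}$, with the $\delta^{\alpha}$ gain on the leftover degree-two coefficients obtained from their vanishing at $s=0$. The only (harmless) deviation is that you bound those leftover coefficients by $\delta^{\alpha}\fan{F^a(\Y),F^a(\tilde\Y)}_{\X,\X;\alpha}$ and reuse Theorem~\ref{thm:stab2}, whereas the paper expands them by hand via \eqref{eq:myw1} and the mean value theorem into differences of the components of $\Y$ and $\tilde\Y$.
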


\begin{proof}
Invoking~(\ref{eq:dist}),
\begin{align}
\fan {\mathcal{M}\Y,\mathcal{M}\tilde{\Y}}_{\X, \X;\alpha}
=&\ \|R\mathcal{M}\Y^{\etree}-R\mathcal{M}\tilde\Y^{\etree}\| _{3\alpha} +\sum_{a\in A}\|R\mathcal{M}\Y^{\bullet_a}-R\mathcal{M}\tilde\Y^{\bullet_a}\|_{2\alpha}\hspace{3cm}\mlabel{eq:ztz}\\
&\ +\sum_{a,b\in A}\|R\mathcal{M}\Y^{\bullet_a\bullet_b}-R\mathcal{M}\tilde\Y^{\bullet_a\bullet_b}\|_{\alpha}+\sum_{a,b\in A}\|R\mathcal{M}\Y^{\tddeuxa{$a$}{$b$}}\ \,-R\mathcal{M}\tilde\Y^{\tddeuxa{$a$}{$b$}}\ \,\|_{\alpha}.\notag
\end{align}
We just give the bound of the first term on the right hand side of the above equation in detail, as the others are treated analogously. We have
\begin{align*}
&\ |R\mathcal{M}\Y_{s,t}^{\etree }-R\mathcal{M}\tilde\Y_{s,t}^{\etree }|\\
=&\ \Bigg|\Bigg(\sum_{a=1}^{d} \R{$\Int_s^t$}{F^a(Y_{r}) dX_{r}^a}-\Big(\sum_{a\in A}F^a(\Y_s)^{\etree}\X_{s,t} ^{\bullet_a}+\sum_{a,b\in A}F^a(\Y_s)^{\bullet_b}\X_{s,t} ^{\tddeuxa{$a$}{$b$}}\ \Big)\\
&\ \hspace{0.15cm}-\Big(\sum_{a\in A}\Y_0^{\bullet_a}\X_{s,t} ^{\bullet_a}+\sum_{a,b\in A}\Y_0^{\bullet_a\bullet_b}\X_{s,t} ^{\bullet_a\bullet_b}+\sum_{a,b\in A}\Y_0^{\tddeuxa{$a$}{$b$}}\ \,\X_{s,t} ^{\tddeuxa{$a$}{$b$}}\ \Big)\Bigg)\\
&\ -\Bigg(\sum_{a=1}^{d} \R{$\Int_s^t$}{F^a(\tilde Y_{r}) dX_{r}^a}-\Big(\sum_{a\in A}F^a(\tilde\Y_s)^{\etree}\X_{s,t} ^{\bullet_a}+\sum_{a,b\in A}F^a(\tilde\Y_s)^{\bullet_b}\X_{s,t} ^{\tddeuxa{$a$}{$b$}}\ \Big)\\
&\ \hspace{0.5cm}-\Big(\sum_{a\in A}\tilde\Y_0^{\bullet_a}\X_{s,t} ^{\bullet_a}+\sum_{a,b\in A}\tilde\Y_0^{\bullet_a\bullet_b}\X_{s,t} ^{\bullet_a\bullet_b}+\sum_{a,b\in A}\tilde\Y_0^{\tddeuxa{$a$}{$b$}}\ \,\X_{s,t} ^{\tddeuxa{$a$}{$b$}}\ \Big)\Bigg)\Bigg|\hspace{0.5cm}(\text{by  ~(\ref{eq:wq1})})\\
=&\ \Bigg| \sum_{a=1}^{d}\R{$\Int_s^t$}{F^a(Y_{r})-F^a(\tilde Y_{r}) dX_{r}^a}-\Big(\sum_{a\in A}(F^a(\Y_s)^{\etree}-F^a(\tilde\Y_s)^{\etree})\X_{s,t} ^{\bullet_a} +\sum_{a,b\in A}(F^a(\Y_s)^{\bullet_b}-F^a(\tilde\Y_s)^{\bullet_b})\X_{s,t} ^{\tddeuxa{$a$}{$b$}}\ \Big)\Bigg|\\
&\ \hspace{8cm}(\text{by $\Y_{0}^\tau =\tilde\Y_{0}^\tau$})\\
\le&\ \sum_{a=1}^{d}\Bigg|\R{$\Int_s^t$}{F^a(Y_{r})-F^a(\tilde Y_{r}) dX_{r}^a}-\Big((F^a(\Y_s)^{\etree}-F^a(\tilde\Y_s)^{\etree})\X_{s,t} ^{\bullet_a} +\sum_{b\in A}(F^a(\Y_s)^{\bullet_b}-F^a(\tilde\Y_s)^{\bullet_b})\X_{s,t} ^{\tddeuxa{$a$}{$b$}}\ \Big)\Bigg|\\
&\ \hspace{7cm}(\text{by the triangle inequality})\\
\le &\ \sum_{a=1}^{d}\Bigg|\R{$\Int_s^t$}{F^a(Y_{r})-F^a(\tilde Y_{r}) dX_{r}^a}-\Big((F^a(\Y_s)^{\etree}-F^a(\tilde\Y_s)^{\etree})\X_{s,t} ^{\bullet_a}\\
&\ \hspace{0.8cm}+\sum_{b\in A}(F^a(\Y_s)^{\bullet_b}-F^a(\tilde\Y_s)^{\bullet_b})\X_{s,t} ^{\tddeuxa{$a$}{$b$}}\ \, +\sum_{b,c\in A}(F^a(\Y_s)^{\bullet_b\bullet_c}-F^a(\tilde\Y_s)^{\bullet_b\bullet_c})\X_{s,t}^{\tdtroisuna{$a$}{$c$}{$b$}}\\
&\ \hspace{0.8cm}+\sum_{b,c\in A}(F^a(\Y_s)^{\tddeuxa{$b$}{$c$}}\ \,-F^a(\tilde\Y_s)^{\tddeuxa{$b$}{$c$}}\ \,)\X_{s,t} ^{\tdddeuxa{$a$}{$b$}{$c$}}\ \Big)\Bigg|\\
&\ +\sum_{a=1}^{d}\Bigg|-\sum_{b,c\in A}(F^a(\Y_s)^{\bullet_b\bullet_c}-F^a(\tilde\Y_s)^{\bullet_b\bullet_c})
\X_{s,t}^{\tdtroisuna{$a$}{$c$}{$b$}} \, - \sum_{b,c\in A}(F^a(\Y_s)^{\tddeuxa{$b$}{$c$}}\ \,-F^a(\tilde\Y_s)^{\tddeuxa{$b$}{$c$}}\ \,)\X_{s,t} ^{\tdddeuxa{$a$}{$b$}{$c$}}\ \,\Bigg|\\
&\ \hspace{6cm}(\text{by the triangle inequality})\\
\le&\ \sum_{a=1}^{d}C_\alpha\fan{F^a(\Y)-F^a(\tilde\Y)}_{\X;\alpha }\fan{\X}_{\alpha }|t-s|^{4\alpha}\\
&\ +\sum_{a=1}^{d}\Bigg|\sum_{b,c\in A}(F^a(\Y_s)^{\bullet_b\bullet_c}-F^a(\tilde\Y_s)^{\bullet_b\bullet_c})\X_{s,t}^{\tdtroisuna{$a$}{$c$}{$b$}}+\sum_{b,c\in
A}(F^a(\Y_s)^{\tddeuxa{$b$}{$c$}}\ \,-F^a(\tilde\Y_s)^{\tddeuxa{$b$}{$c$}}\ \,)\X_{s,t} ^{\tdddeuxa{$a$}{$b$}{$c$}}\ \,\Bigg|. \quad (\text{by (\ref{eq:nt})})
\end{align*}
For simplicity, denote by
\begin{align*}
I_1:=&\ \sum_{a=1}^{d}C_\alpha\fan{F^a(\Y)-F^a(\tilde\Y)}_{\X;\alpha }\fan{\X}_{\alpha }|t-s|^{4\alpha},\\
I_2:=&\ \sum_{a=1}^{d}\Bigg|\sum_{b,c\in A}(F^a(\Y_s)^{\bullet_b\bullet_c}-F^a(\tilde\Y_s)^{\bullet_b\bullet_c})\X_{s,t}^{\tdtroisuna{$a$}{$c$}{$b$}}+\sum_{b,c\in
A}(F^a(\Y_s)^{\tddeuxa{$b$}{$c$}}\ \,-F^a(\tilde\Y_s)^{\tddeuxa{$b$}{$c$}}\ \,)\X_{s,t} ^{\tdddeuxa{$a$}{$b$}{$c$}}\ \,\Bigg|.
\end{align*}
We have
\begin{align}
&\ \frac{I_1}{|t-s|^{3\alpha} }\mlabel{eq:lastI1}\\
\le&\ \sum_{a=1}^{d}C_\alpha\delta^{\alpha}\fan{F^a(\Y)-F^a(\tilde\Y)}_{\X;\alpha }\fan{\X}_{\alpha } \quad (\text{by $0<s<t<\delta$})\notag\\
=&\ \sum_{i=1}^{d}C_\alpha\delta^{\alpha}\fan{F^i(\Y), F^i(\tilde\Y)}_{\X, \X;\alpha }\fan{\X}_{\alpha }\quad (\text{by  ~(\ref{eq:dist}) and replacing $a$ by $i$} )\notag\\
\le&\ \sum_{i=1}^{d}C_\alpha\delta^{\alpha}4d^2\|F^i\|_{C_{b}^{3}} M\Big(\delta, \sum_{a\in A}|\Y_{0}^{\bullet_a}|, \sum_{a,b\in A}|\Y_{0}^{\bullet_a\bullet_b}|, \sum_{a,b\in A}| \Y_{0}^{\tddeuxa{$a$}{$b$}}\ \,|, \fan{\Y} _{\X;\alpha }, \fan{\tilde\Y} _{\X;\alpha},\fan{\X} _{\alpha}\Big)\fan{\Y,\tilde{\Y} }_{\X,\X;\alpha}.\notag
\end{align}
Here the last step employs Theorem~\ref{thm:stab2} with $\Y,\tilde{\Y}\in \cbrpxt$ and  $\Y_0=\tilde\Y_0$ by the hypothesis.
For $I_2$, we have
\begin{align}
&\ \frac{I_2}{|t-s|^{3\alpha} } \mlabel{eq:last0}
\\
\le&\ \sum_{a=1}^{d}\sum_{b,c\in A}\Big|F^a(\Y_s)^{\bullet_b\bullet_c}-F^a(\tilde\Y_s)^{\bullet_b\bullet_c}\Big|\frac{\Big|\X_{s,t}^{\tdtroisuna{$a$}{$c$}{$b$}}\ \,\Big|}{|t-s|^{3\alpha}}+ \sum_{a=1}^{d}\sum_{b,c\in
A}\Big|F^a(\Y_s)^{\tddeuxa{$b$}{$c$}}\ \,-F^a(\tilde\Y_s)^{\tddeuxa{$b$}{$c$}}\ \,\Big|\frac{\Big|\X_{s,t} ^{\tdddeuxa{$a$}{$b$}{$c$}}\ \,\Big|}{|t-s|^{3\alpha}} \notag\\
&\ \hspace{7cm}(\text{by the triangle inequality})\notag\\
\le&\ \bigg(\sum_{i=1}^{d}\sum_{b,c\in A}\Big|F^i(\Y_s)^{\bullet_b\bullet_c}-F^i(\tilde\Y_s)^{\bullet_b\bullet_c}\Big|\bigg)\bigg(\sum_{a,b,c\in A} \frac{\Big|\X_{s,t}^{\tdtroisuna{$a$}{$c$}{$b$}}\ \,\Big|}{|t-s|^{3\alpha}}\bigg)\notag\\
&\ +\bigg(\sum_{i=1}^{d}\sum_{b,c\in
A}\Big|F^i(\Y_s)^{\tddeuxa{$b$}{$c$}}\ \,-F^i(\tilde\Y_s)^{\tddeuxa{$b$}{$c$}}\ \,\Big| \bigg)\bigg(\sum_{a,b,c\in A}\frac{\Big|\X_{s,t} ^{\tdddeuxa{$a$}{$b$}{$c$}}\ \,\Big|}{|t-s|^{3\alpha}}\bigg) \quad(\text{each above item appears in below})\notag\\
\le&\ \sum_{i=1}^{d}\sum_{b,c\in A}\Big|F^i(\Y_s)^{\bullet_b\bullet_c}-F^i(\tilde\Y_s)^{\bullet_b\bullet_c}\Big|\,\fan{\X}_{\alpha }+\sum_{i=1}^{d}\sum_{b,c\in
A}\Big|F^i(\Y_s)^{\tddeuxa{$b$}{$c$}}\ \,-F^i(\tilde\Y_s)^{\tddeuxa{$b$}{$c$}}\ \,\Big|\,\fan{\X}_{\alpha }\notag\\
&\ \hspace{4cm}(\text{by the triangle inequality and  ~\eqref{eq:normx}, (\ref{eq:distx})})\notag\\
=&\ \sum_{i=1}^{d}\sum_{b,c\in A}\Big|\Big((F^i)'(\Y^{\etree}_{s})\Y^{\bullet_b\bullet_c}_{s}+(F^i)''(\Y^{\etree}_{s})\Y^{\bullet_b}_{s}\Y^{\bullet_c}_{s}\Big)-\Big((F^i)'(\tilde\Y^{\etree}_{s})\tilde\Y^{\bullet_b\bullet_c}_{s}+(F^i)''(\tilde\Y^{\etree}_{s})\tilde\Y^{\bullet_b}_{s}\tilde\Y^{\bullet_c}_{s}\Big)\Big|\,\fan{\X}_{\alpha }\notag\\
&+\sum_{i=1}^{d}\sum_{b,c\in A}\Big|(F^i)'(\Y^{\etree}_{s})\Y^{\tddeuxa{$b$}{$c$}}_{s}\ \,-(F^i)'(\tilde\Y^{\etree}_{s})\tilde\Y^{\tddeuxa{$b$}{$c$}}_{s}\ \,\Big|\,\fan{\X}_{\alpha }\hspace{3cm} (\text{by ~(\ref{eq:myw1})})\notag\\
\le&\ \sum_{i=1}^{d}\sum_{b,c\in A}\Bigg(\Big|(F^i)'(\Y^{\etree}_{s})\Y^{\bullet_b\bullet_c}_{s}-(F^i)'(\tilde\Y^{\etree}_{s})\tilde\Y^{\bullet_b\bullet_c}_{s}\Big|+
\Big|(F^i)''(\Y^{\etree}_{s})\Y^{\bullet_b}_{s}\Y^{\bullet_c}_{s}-(F^i)''(\tilde\Y^{\etree}_{s})\tilde\Y^{\bullet_b}_{s}\tilde\Y^{\bullet_c}_{s}\Big|\notag\\
&\ \hspace{1.5cm}+\Big|(F^i)'(\Y^{\etree}_{s})\Y^{\tddeuxa{$b$}{$c$}}_{s}\ \,-(F^i)'(\tilde\Y^{\etree}_{s})\tilde\Y^{\tddeuxa{$b$}{$c$}}_{s}\ \,\Big|\Bigg)\fan{\X}_{\alpha }\hspace{2cm} (\text{by the triangle inequality})\notag\\
=&\ \sum_{i=1}^{d}\sum_{b,c\in A}\Bigg(\Big|\Big((F^i)'(\Y^{\etree}_{s})\Y^{\bullet_b\bullet_c}_{s}-(F^i)'(\tilde\Y^{\etree}_{s})\Y^{\bullet_b\bullet_c}_{s}\Big)+\Big((F^i)'(\tilde\Y^{\etree}_{s})\Y^{\bullet_b\bullet_c}_{s}-(F^i)'(\tilde\Y^{\etree}_{s})\tilde\Y^{\bullet_b\bullet_c}_{s}\Big)\Big|\notag\\
&\ +\Big|\Big((F^i)''(\Y^{\etree}_{s})\Y^{\bullet_b}_{s}\Y^{\bullet_c}_{s}-(F^i)''(\tilde\Y^{\etree}_{s})\Y^{\bullet_b}_{s}\Y^{\bullet_c}_{s}\Big)+\Big((F^i)''(\tilde\Y^{\etree}_{s})\Y^{\bullet_b}_{s}\Y^{\bullet_c}_{s}-(F^i)''(\tilde\Y^{\etree}_{s})\tilde\Y^{\bullet_b}_{s}\tilde\Y^{\bullet_c}_{s}\Big)\Big|\notag\\
&\ +\Big|\Big((F^i)'(\Y^{\etree}_{s})\Y^{\tddeuxa{$b$}{$c$}}_{s}\ \,-(F^i)'(\tilde\Y^{\etree}_{s})\Y^{\tddeuxa{$b$}{$c$}}_{s}\ \,\Big)+\Big((F^i)'(\tilde\Y^{\etree}_{s})\Y^{\tddeuxa{$b$}{$c$}}_{s}\ \,-(F^i)'(\tilde\Y^{\etree}_{s})\tilde\Y^{\tddeuxa{$b$}{$c$}}_{s}\ \,\Big)\Big|\Bigg)\fan{\X}_{\alpha }\notag\\
\le&\ \sum_{i=1}^{d}\sum_{b,c\in A}\Bigg(\Big|(F^i)'(\Y^{\etree}_{s})-(F^i)'(\tilde\Y^{\etree}_{s})\Big|\,\Big|\Y^{\bullet_b\bullet_c}_{s}\Big|+\Big|(F^i)'(\tilde\Y^{\etree}_{s})\Big|\,\Big|\Y^{\bullet_b\bullet_c}_{s}-\tilde\Y^{\bullet_b\bullet_c}_{s}\Big|\notag\\
&\ +\Big|(F^i)''(\Y^{\etree}_{s})-(F^i)''(\tilde\Y^{\etree}_{s})\Big|\,\Big|\Y^{\bullet_b}_{s}\Y^{\bullet_c}_{s}\Big|+\Big|(F^i)''(\tilde\Y^{\etree}_{s})\Big|\,\Big|\Y^{\bullet_b}_{s}\Y^{\bullet_c}_{s}-\tilde\Y^{\bullet_b}_{s}\tilde\Y^{\bullet_c}_{s}\Big|\notag\\
&\ +\Big|(F^i)'(\Y^{\etree}_{s})-(F^i)'(\tilde\Y^{\etree}_{s})\Big|\,\Big|\Y^{\tddeuxa{$b$}{$c$}}_{s}\ \,\Big|+\Big|(F^i)'(\tilde\Y^{\etree}_{s})\Big|\,\Big|\Y^{\tddeuxa{$b$}{$c$}}_{s}\ \,-\tilde\Y^{\tddeuxa{$b$}{$c$}}_{s}\ \,\Big|\Bigg)\fan{\X}_{\alpha }\notag\\
&\ \hspace{6cm} (\text{by the triangle inequality})\notag\\
\le&\  \sum_{i=1}^{d}\sum_{b,c\in A}\Big(\|(F^i)''\|_{\infty}|\Y^{\etree}_{s}-\tilde\Y^{\etree}_{s}|\,|\Y^{\bullet_b\bullet_c}_{s}|+\|(F^i)'\|_{\infty}|\Y^{\bullet_b\bullet_c}_{s}-\tilde\Y^{\bullet_b\bullet_c}_{s}|\notag\\
&\ +\|(F^i)'''\|_{\infty}|\Y^{\etree}_{s}-\tilde\Y^{\etree}_{s}|\,|\Y^{\bullet_b}_{s}|\,|\Y^{\bullet_c}_{s}|+\|(F^i)''\|_{\infty}|\Y^{\bullet_b}_{s}\Y^{\bullet_c}_{s}-\tilde\Y^{\bullet_b}_{s}\tilde\Y^{\bullet_c}_{s}|\notag\\
&\ +\|(F^i)''\|_{\infty}|\Y^{\etree}_{s}-\tilde\Y^{\etree}_{s}|\,|\Y^{\tddeuxa{$b$}{$c$}}_{s}\ \,|+\|(F^i)'\|_{\infty}|\Y^{\tddeuxa{$b$}{$c$}}_{s}\ \,-\tilde\Y^{\tddeuxa{$b$}{$c$}}_{s}\ \,|\Big)\fan{\X}_{\alpha }\notag\\
&\ \hspace{2cm} (\text{by the differential mean value theorem of $(F^i)''$ and  ~\eqref{eq:infF}}) \notag\\
\le&\ \sum_{i=1}^{d}\sum_{b,c\in A}\|F^i\|_{C_{b}^{3}}\Big(|\Y^{\etree}_{s}-\tilde\Y^{\etree}_{s}|\,|\Y^{\bullet_b\bullet_c}_{s}| +|\Y^{\etree}_{s}-\tilde\Y^{\etree}_{s}|\,|\Y^{\bullet_b}_{s}|\,|\Y^{\bullet_c}_{s}|+|\Y^{\etree}_{s}-\tilde\Y^{\etree}_{s}|\,|\Y^{\tddeuxa{$b$}{$c$}}_{s}\ \,|\Big)\fan{\X}_{\alpha }\notag\\
&\ +\sum_{i=1}^{d}\sum_{b,c\in A}\|F^i\|_{C_{b}^{3}}\Big(|\Y^{\bullet_b\bullet_c}_{s}-\tilde\Y^{\bullet_b\bullet_c}_{s}|+|\Y^{\bullet_b}_{s}\Y^{\bullet_c}_{s}-\tilde\Y^{\bullet_b}_{s}\tilde\Y^{\bullet_c}_{s}|+|\Y^{\tddeuxa{$b$}{$c$}}_{s}\ \,-\tilde\Y^{\tddeuxa{$b$}{$c$}}_{s}\ \,|  \Big)\fan{\X}_{\alpha }.\notag
\end{align}
Now we estimate the two terms on the right hand side of the above equation. For the first term,
\begin{align}
&\sum_{i=1}^{d}\sum_{b,c\in A}\|F^i\|_{C_{b}^{3}}\Big(|\Y^{\etree}_{s}-\tilde\Y^{\etree}_{s}|\,|\Y^{\bullet_b\bullet_c}_{s}| +|\Y^{\etree}_{s}-\tilde\Y^{\etree}_{s}|\,|\Y^{\bullet_b}_{s}|\,|\Y^{\bullet_c}_{s}|+|\Y^{\etree}_{s}-\tilde\Y^{\etree}_{s}|\,|\Y^{\tddeuxa{$b$}{$c$}}_{s}\ \,|\Big)\fan{\X}_{\alpha }\mlabel{eq:last1}\\
=&\ \sum_{i=1}^{d}\sum_{b,c\in A}\|F^i\|_{C_{b}^{3}}\Big(|\Y^{\etree}_{0,s}-\tilde\Y^{\etree}_{0,s}|\,|\Y^{\bullet_b\bullet_c}_{s}| +|\Y^{\etree}_{0,s}-\tilde\Y^{\etree}_{0,s}|\,|\Y^{\bullet_b}_{s}|\,|\Y^{\bullet_c}_{s}|+|\Y^{\etree}_{0,s}-\tilde\Y^{\etree}_{0,s}|\,|\Y^{\tddeuxa{$b$}{$c$}}_{s}\ \,|\Big)\fan{\X}_{\alpha }\notag\\
&\hspace{8cm} (\text{by $\Y^{\etree}_{0}=\tilde\Y^{\etree}_{0}$})\notag\\
=&\ \sum_{i=1}^{d}\sum_{b,c\in A}\|F^i\|_{C_{b}^{3}}\Big(|R\Y^{\etree}_{0,s}-R\tilde\Y^{\etree}_{0,s}|\,|\Y^{\bullet_b\bullet_c}_{s}| +|R\Y^{\etree}_{0,s}-R\tilde\Y^{\etree}_{0,s}|\,|\Y^{\bullet_b}_{s}|\,|\Y^{\bullet_c}_{s}|+|R\Y^{\etree}_{0,s}-R\tilde\Y^{\etree}_{0,s}|\,|\Y^{\tddeuxa{$b$}{$c$}}_{s}\ \,|\Big)\fan{\X}_{\alpha }\notag\\
&\hspace{7cm} (\text{by $\Y_{0}=\tilde\Y_{0}$ and   ~(\ref{eq:cbrp1})})\notag\\
\le&\ \sum_{i=1}^{d}\sum_{b,c\in A}\|F^i\|_{C_{b}^{3}}\delta^{3\alpha}\frac{|R\Y^{\etree}_{0,s}-R\tilde\Y^{\etree}_{0,s}|}{|s|^{3\alpha}}\Big(|\Y^{\bullet_b\bullet_c}_{s}| +|\Y^{\bullet_b}_{s}|\,|\Y^{\bullet_c}_{s}|+|\Y^{\tddeuxa{$b$}{$c$}}_{s}\ \,|\Big)\fan{\X}_{\alpha }\quad (\text{by $0<s<\delta$})\notag\\
\le&\  \sum_{i=1}^{d}\sum_{b,c\in A}\|F^i\|_{C_{b}^{3}}\delta^{3\alpha}\|R\Y^{\etree}-R\tilde\Y^{\etree}\|_{3\alpha}\Big(|\Y^{\bullet_b\bullet_c}_{s}| +|\Y^{\bullet_b}_{s}|\,|\Y^{\bullet_c}_{s}|+|\Y^{\tddeuxa{$b$}{$c$}}_{s}\ \,|\Big)\fan{\X}_{\alpha }\quad (\text{by   ~(\ref{eq:remainry})})\notag\\
\le&\ \sum_{i=1}^{d}\delta^{\alpha}\|F^i\|_{C_{b}^{3}} \fan{\Y,\tilde{\Y} }_{\X,\X;\alpha} M\Big(\delta, \sum_{a\in A}|\Y_{0}^{\bullet_a}|, \sum_{a,b\in A}|\Y_{0}^{\bullet_a\bullet_b}|, \sum_{a,b\in A}| \Y_{0}^{\tddeuxa{$a$}{$b$}}\ \,|, \fan{\Y} _{\X;\alpha }, \fan{\tilde\Y} _{\X;\alpha},\fan{\X} _{\alpha}\Big).\notag\\
&\hspace{7cm} (\text{by   ~(\ref{eq:dist}), ~(\ref{eq:iequ3}), ~(\ref{eq:iequ2}) and ~(\ref{eq:iequ4})})\notag
\end{align}
For the second term,
\begin{align}
&\sum_{i=1}^{d}\sum_{b,c\in A}\|F^i\|_{C_{b}^{3}}\Big(|\Y^{\bullet_b\bullet_c}_{s}-\tilde\Y^{\bullet_b\bullet_c}_{s}|+|\Y^{\bullet_b}_{s}\Y^{\bullet_c}_{s}-\tilde\Y^{\bullet_b}_{s}\tilde\Y^{\bullet_c}_{s}|+|\Y^{\tddeuxa{$b$}{$c$}}_{s}\ \,-\tilde\Y^{\tddeuxa{$b$}{$c$}}_{s}\ \,|  \Big)\fan{\X}_{\alpha }\mlabel{eq:last2}\\
=&\ \sum_{i=1}^{d}\sum_{b,c\in A}\|F^i\|_{C_{b}^{3}}\Big(|\Y^{\bullet_b\bullet_c}_{0,s}-\tilde\Y^{\bullet_b\bullet_c}_{0,s}|+|\Y^{\bullet_b}_{s}\Y^{\bullet_c}_{s}-\tilde\Y^{\bullet_b}_{s}\tilde\Y^{\bullet_c}_{s}|+|\Y^{\tddeuxa{$b$}{$c$}}_{0, s}\ \,-\tilde\Y^{\tddeuxa{$b$}{$c$}}_{0, s}\ \,|\Big)\fan{\X}_{\alpha }\notag\\
&\hspace{3cm} (\text{by $\Y^{\bullet_b\bullet_c}_{0}=\tilde\Y^{\bullet_b\bullet_c}_{0}$, $\Y^{\tddeuxa{$b$}{$c$}}_{0}\ \,=\tilde\Y^{\tddeuxa{$b$}{$c$}}_{0}$\ \, and the triangle inequality})\notag\\
\le&\ \sum_{i=1}^{d}\sum_{b,c\in A}\|F^i\|_{C_{b}^{3}}\Big(\delta^{\alpha}\|R\Y^{\bullet_b\bullet_c}-R\tilde\Y^{\bullet_b\bullet_c}\|_{\alpha}+\delta^{\alpha}\|R\Y^{\tddeuxa{$b$}{$c$}}\ \,-R\tilde\Y^{\tddeuxa{$b$}{$c$}}\ \,\|_{\alpha}+|\Y^{\bullet_b}_{s}\Y^{\bullet_c}_{s}-\tilde\Y^{\bullet_b}_{s}\tilde\Y^{\bullet_c}_{s}|\Big)\fan{\X}_{\alpha }\notag\\
&\hspace{7cm} (\text{by   ~(\ref{eq:remainry}), (\ref{eq:cbrp3}) and (\ref{eq:cbrp4})})\notag\\
\le&\ \sum_{i=1}^{d}\delta^{\alpha}\|F^i\|_{C_{b}^{3}}\fan{\Y,\tilde{\Y} }_{\X,\X;\alpha}\fan{\X}_{\alpha}+\sum_{i=1}^{d}\sum_{b,c\in A}\|F^i\|_{C_{b}^{3}}|\Y^{\bullet_b}_{s}\Y^{\bullet_c}_{s}
-\tilde\Y^{\bullet_b}_{s}\tilde\Y^{\bullet_c}_{s}|\,\fan{\X}_{\alpha }. \quad (\text{by  ~(\ref{eq:dist})})\notag
\end{align}
The $|\Y^{\bullet_b}_{s}\Y^{\bullet_c}_{s}
-\tilde\Y^{\bullet_b}_{s}\tilde\Y^{\bullet_c}_{s}|$ is bounded by
\begin{align}
|\Y^{\bullet_b}_{s}\Y^{\bullet_c}_{s}
-\tilde\Y^{\bullet_b}_{s}\tilde\Y^{\bullet_c}_{s}|
=&\ |(\Y^{\bullet_b}_{s}-\tilde\Y^{\bullet_b}_{s})\Y^{\bullet_c}_{s}
+\tilde\Y^{\bullet_b}_{s}(\Y^{\bullet_c}_{s}-\tilde\Y^{\bullet_c}_{s})|
 \hspace{5cm}\mlabel{eq:last3}\\
\le&\ |\Y^{\bullet_b}_{s}-\tilde\Y^{\bullet_b}_{s}|\,|\Y^{\bullet_c}_{s}|+|\tilde\Y^{\bullet_b}_{s}|\,|\Y^{\bullet_c}_{s}-\tilde\Y^{\bullet_c}_{s}|\hspace{2cm}(\text{by the triangle inequality})\notag\\
=&\ |\Y^{\bullet_b}_{0, s}-\tilde\Y^{\bullet_b}_{0, s}|\,|\Y^{\bullet_c}_{s}|+|\tilde\Y^{\bullet_b}_{s}|\,|\Y^{\bullet_c}_{0, s}-\tilde\Y^{\bullet_c}_{0, s}|\hspace{2cm} (\text{by $\Y^{\bullet_b}_{0}=\tilde\Y^{\bullet_b}_{0}$ and $\Y^{\bullet_c}_{0}=\tilde\Y^{\bullet_c}_{0}$})\notag\\
%
=&\ |R\Y_{0,s}^{\bullet_b}-R\tilde\Y_{0,s}^{\bullet_b}|\,|\Y^{\bullet_c}_{s}|
+|\tilde\Y^{\bullet_b}_{s}|\,|R\Y_{0,s}^{\bullet_c}-R\tilde\Y_{0,s}^{\bullet_c}|
\hspace{2cm}(\text{by  ~(\ref{eq:cbrp2}) and $\Y_{0}=\tilde\Y_{0}$})\notag\\
\le&\ \delta^{2\alpha}\|R\Y^{\bullet_b}-R\tilde\Y^{\bullet_b}\|_{2\alpha}|\Y^{\bullet_c}_{s}|
+\delta^{2\alpha}|\tilde\Y^{\bullet_b}_{s}|\,\|R\Y^{\bullet_c}
-R\tilde\Y^{\bullet_c}\|_{2\alpha}.
\quad (\text{by   ~(\ref{eq:cbrp3}) and (\ref{eq:cbrp4})})\notag
\end{align}
Substitute   ~(\ref{eq:last3}) into   ~(\ref{eq:last2}) and then substitute   ~(\ref{eq:last1}) and   ~(\ref{eq:last2}) into   ~(\ref{eq:last0}),
\begin{align}
&\ \frac{I_2}{|t-s|^{3\alpha} }\hspace{12.3cm} \mlabel{eq:lastI2}\\
\le&\ \sum_{i=1}^{d}\delta^{\alpha}\|F^i\|_{C_{b}^{3}} M\Big(\delta, \sum_{a\in A}|\Y_{0}^{\bullet_a}|, \sum_{a,b\in A}|\Y_{0}^{\bullet_a\bullet_b}|, \sum_{a,b\in A}| \Y_{0}^{\tddeuxa{$a$}{$b$}}\ \,|, \fan{\Y} _{\X;\alpha }, \fan{\tilde\Y} _{\X;\alpha},\fan{\X} _{\alpha}\Big)\fan{\Y,\tilde{\Y} }_{\X,\X;\alpha}\notag\\
&\ +\sum_{i=1}^{d}\delta^{\alpha}\|F^i\|_{C_{b}^{3}}\fan{\Y,\tilde{\Y} }_{\X,\X;\alpha}\fan{\X}_{\alpha}\notag\\
&\ +\sum_{i=1}^{d}\sum_{b,c\in A}\|F^i\|_{C_{b}^{3}}\bigg(\delta^{2\alpha}|											\Y^{\bullet_c}_{s}|\,\|R\Y^{\bullet_b}-R\tilde\Y^{\bullet_b}\|_{2\alpha}+\delta^{2\alpha}|\tilde\Y^{\bullet_b}_{s}|\,\|R\Y^{\bullet_c}-R\tilde\Y^{\bullet_c}\|_{2\alpha} \bigg)\fan{\X}_{\alpha}\notag\\
\le&\ \sum_{i=1}^{d}\delta^{\alpha}\|F^i\|_{C_{b}^{3}} M\Big(\delta, \sum_{a\in A}|\Y_{0}^{\bullet_a}|, \sum_{a,b\in A}|\Y_{0}^{\bullet_a\bullet_b}|, \sum_{a,b\in A}| \Y_{0}^{\tddeuxa{$a$}{$b$}}\ \,|, \fan{\Y} _{\X;\alpha }, \fan{\tilde\Y} _{\X;\alpha},\fan{\X} _{\alpha}\Big)\fan{\Y,\tilde{\Y} }_{\X,\X;\alpha}\notag\\
&\ +\sum_{i=1}^{d}\delta^{\alpha}\|F^i\|_{C_{b}^{3}}\fan{\Y,\tilde{\Y} }_{\X,\X;\alpha}\fan{\X}_{\alpha}\notag\\
&\ +\sum_{i=1}^{d}\delta^{2\alpha}\|F^i\|_{C_{b}^{3}}\bigg(\sum_{c\in A}|\Y^{\bullet_c}_{s}|+\sum_{b\in A}|\tilde\Y^{\bullet_b}_{s}| \bigg)\bigg(\sum_{b\in A}  \|R\Y^{\bullet_b}-R\tilde\Y^{\bullet_b}\|_{2\alpha}+\sum_{c\in A}\|R\Y^{\bullet_c}-R\tilde\Y^{\bullet_c}\|_{2\alpha} \bigg)\fan{\X}_{\alpha}\notag\\
&\ \hspace{6cm}(\text{each above item appears in below })\notag\\
\le&\ \sum_{i=1}^{d}\delta^{\alpha}\|F^i\|_{C_{b}^{3}} M\Big(\delta, \sum_{a\in A}|\Y_{0}^{\bullet_a}|, \sum_{a,b\in A}|\Y_{0}^{\bullet_a\bullet_b}|, \sum_{a,b\in A}| \Y_{0}^{\tddeuxa{$a$}{$b$}}\ \,|, \fan{\Y} _{\X;\alpha }, \fan{\tilde\Y} _{\X;\alpha},\fan{\X} _{\alpha}\Big)\fan{\Y,\tilde{\Y} }_{\X,\X;\alpha}\notag\\
&\ +\sum_{i=1}^{d}\delta^{\alpha}\|F^i\|_{C_{b}^{3}}\fan{\Y,\tilde{\Y} }_{\X,\X;\alpha}\fan{\X}_{\alpha}\notag\\
&\ +\sum_{i=1}^{d}\delta^{2\alpha}\|F^i\|_{C_{b}^{3}}\bigg(\sum_{c\in A}|\Y^{\bullet_c}_{s}|+\sum_{b\in A}|\tilde\Y^{\bullet_b}_{s}| \bigg)\fan{\Y,\tilde{\Y} }_{\X,\X;\alpha}\fan{\X}_{\alpha}\hspace{2cm} (\text{by   ~(\ref{eq:dist})})\notag\\
\le&\ \sum_{i=1}^{d}\delta^{\alpha}\|F^i\|_{C_{b}^{3}} M\Big(\delta, \sum_{a\in A}|\Y_{0}^{\bullet_a}|, \sum_{a,b\in A}|\Y_{0}^{\bullet_a\bullet_b}|, \sum_{a,b\in A}| \Y_{0}^{\tddeuxa{$a$}{$b$}}\ \,|, \fan{\Y} _{\X;\alpha }, \fan{\tilde\Y} _{\X;\alpha},\fan{\X} _{\alpha}\Big)\fan{\Y,\tilde{\Y} }_{\X,\X;\alpha}.\notag\\
&\ \hspace{6cm}(\text{by   ~(\ref{eq:iequ4}), $\Y_0=\tilde\Y_0$ and adjusting $M$})\notag
\end{align}
Hence we conclude
\begin{align*}
&\ \|R\mathcal{M}\Y^{\etree}-R\mathcal{M}\tilde\Y^{\etree}\|_{3\alpha } =\sup_{s\ne t\in [ 0,T]} \dfrac{|R\mathcal{M}\Y_{s,t}^{\etree }-R\mathcal{M}\tilde\Y_{s,t}^{\etree }|}{|t-s|^{3\alpha}}\le \sup_{s\ne t\in [ 0,T]}\frac{I_1+I_2}{|t-s|^{3\alpha} }\\
\le&\
(C_\alpha4d^2+1)\delta^{\alpha}\Big(\sum_{i=1}^{d}\|F^i\|_{C_{b}^{3}}\Big)M\Big(\delta, \sum_{a\in A}|\Y_{0}^{\bullet_a}|, \sum_{a,b\in A}|\Y_{0}^{\bullet_a\bullet_b}|, \sum_{a,b\in A}| \Y_{0}^{\tddeuxa{$a$}{$b$}}\ \,|, \fan{\Y} _{\X;\alpha }, \fan{\tilde\Y} _{\X;\alpha},\fan{\X} _{\alpha}\Big)\fan{\Y,\tilde{\Y} }_{\X,\X;\alpha}\\
&\ \hspace{5cm} (\text{by~(\mref{eq:lastI1}) and~(\mref{eq:lastI2})})\\
=:&\ C_\alpha\delta^{\alpha}\Big(\sum_{i=1}^{d}\|F^i\|_{C_{b}^{3}}\Big)M\Big(\delta, \sum_{a\in A}|\Y_{0}^{\bullet_a}|, \sum_{a,b\in A}|\Y_{0}^{\bullet_a\bullet_b}|, \sum_{a,b\in A}| \Y_{0}^{\tddeuxa{$a$}{$b$}}\ \,|, \fan{\Y} _{\X;\alpha }, \fan{\tilde\Y} _{\X;\alpha},\fan{\X} _{\alpha}\Big)\fan{\Y,\tilde{\Y} }_{\X,\X;\alpha}. \qedhere
\end{align*}
\end{proof}

By virtue of Lemma~\mref{lem:jstab1}, the finiteness of the function $M$ is required to ensure the finiteness of $\|\mathcal{M}\Y\|_{\X; \alpha}$. Specifically, since the parameters of $M$ are fixed except for $\|\Y\|_{\X; \alpha}$, the finiteness of $M$ is guaranteed by restricting $\Y$ to a bounded subset of $\cbrpxt$, such as a metric ball:
\begin{equation}
B_{\delta}(\W, R):=\{\Y\in \cbrpxt \mid \fan {\Y-\W}_{\X;\alpha}\le R, \Y_0=\W_0\},
\mlabel{eq:wyzero}
\end{equation}
which will serve as the working domain for applying the Banach fixed point theorem in what follows. Here, the subscript $\delta$ indicates that all relevant paths are restricted to $[0, \delta]$, and the radius $R$ is chosen as needed for subsequent applications.

\begin{theorem}(Local existence and uniqueness)
Let $\alpha\in (\frac{1}{4},\frac{1}{3}]$. For sufficiently small $\delta$, there is a unique $\Y\in \cbrpxt$ such that
\begin{equation*}
\Y_{t}=\Y_{0}+\displaystyle\int_0^t{F(\Y_{r})\cdot dX_{r}},
\end{equation*}	
where $t\in [0, \delta]$, that is,  ~\meqref{eq:RDE} has a unique (local) solution on $[0, \delta]$.
\mlabel{thm:leau}
\end{theorem}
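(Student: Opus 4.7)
The plan is to apply the Banach fixed point theorem to the map $\mathcal{M}$ defined in~(\ref{eq:contram}) on a carefully chosen closed ball $B_{\delta}(\W, R)$ of the form~(\ref{eq:wyzero}), where $\W$ is the constant path $\W_t \equiv \Y_0$ viewed as an $\X$-controlled planarly branched rough path. A fixed point of $\mathcal{M}$ on $[0,\delta]$ is precisely a (local) solution of~(\ref{eq:RDE}), so the two things to verify are that (i) $\mathcal{M}$ sends the ball into itself, and (ii) $\mathcal{M}$ is a strict contraction, both for $\delta$ sufficiently small.

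First I would check the self-map property. Fix $\Y_0$ (with the components $\Y_0^\tau$ as required by Definition~\ref{defn:bRDE}) and pick a radius $R > 0$. For any $\Y \in B_{\delta}(\W, R)$, Lemma~\ref{lem:jstab1} yields
\[
\fan{\mathcal{M}\Y - \W}_{\X;\alpha} \le C_\alpha \Big(\sum_i \|F^i\|_{C_b^3}\Big)\Big(\delta^\alpha M(\delta,\ldots,\fan{\Y}_{\X;\alpha},\fan{\X}_\alpha) + \fan{\X}_\alpha\Big),
\]
where I have absorbed the term $\fan{\W}_{\X;\alpha}$ into $\W$ (noting that $\mathcal{M}\Y$ and $\W$ share the same initial value $\Y_0$, so $\fan{\mathcal{M}\Y-\W}_{\X;\alpha}$ really measures the relevant deviation). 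Since $\fan{\Y}_{\X;\alpha} \le \fan{\W}_{\X;\alpha} + R$ is bounded on the ball, $M(\cdot)$ is bounded there, and for $\delta$ small enough one can ensure the bound is $\le R$, giving $\mathcal{M}(B_\delta(\W,R)) \subset B_\delta(\W, R)$. The key observation is that the factor $\delta^\alpha$ in front of the first summand, together with the fact that $\fan{\X}_\alpha$ is itself small as $\delta \to 0$ (each $\X^\tau$ is $|\tau|\alpha$-H\"older on $[0,\delta]$), drives the right hand side below $R$.

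Next I would prove the contraction property. For $\Y, \tilde{\Y} \in B_\delta(\W, R)$ (hence $\Y_0 = \tilde{\Y}_0$), Lemma~\ref{lem:jstab2} gives
\[
\fan{\mathcal{M}\Y, \mathcal{M}\tilde{\Y}}_{\X,\X;\alpha} \le C_\alpha \delta^{\alpha} \Big(\sum_i \|F^i\|_{C_b^3}\Big) M(\delta, \ldots, \fan{\Y}_{\X;\alpha}, \fan{\tilde{\Y}}_{\X;\alpha}, \fan{\X}_\alpha) \fan{\Y, \tilde{\Y}}_{\X,\X;\alpha}.
\]
Because $\fan{\Y}_{\X;\alpha}$ and $\fan{\tilde{\Y}}_{\X;\alpha}$ are uniformly bounded on $B_\delta(\W, R)$, the entire prefactor is bounded by a quantity of the form $C \delta^\alpha$. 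Choosing $\delta$ even smaller if necessary, this prefactor is $< 1$, so $\mathcal{M}$ is a contraction on $B_\delta(\W, R)$.

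The main obstacle is the simultaneous choice of the pair $(\delta, R)$: $R$ must be large enough that $\W \in B_\delta(\W, R)$ and that $\mathcal{M}\Y$ remains in the ball, yet not so large that the bound on $M$ in the contraction estimate blows up and forces $\delta$ back to $0$. Concretely, one fixes $R = 1$ (say), uses the monotonicity of $M$ in its arguments to get a uniform bound $M_\star$ on the ball, and then selects $\delta$ satisfying both $C_\alpha(\sum_i\|F^i\|_{C_b^3})(\delta^\alpha M_\star + \fan{\X}_\alpha\big|_{[0,\delta]}) \le 1$ and $C_\alpha \delta^\alpha (\sum_i\|F^i\|_{C_b^3}) M_\star < 1$. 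Since $B_\delta(\W,R)$ is a closed subset of the Banach space $(\cbrpxt, \fan{\cdot}_{\X;\alpha})$, hence itself complete, the Banach fixed point theorem furnishes a unique $\Y \in B_\delta(\W,R)$ with $\mathcal{M}\Y = \Y$. A brief remark should then note that any other solution in $\cbrpxt$ on $[0,\delta]$ with initial value $\Y_0$ automatically lies in $B_\delta(\W, R)$ for $\delta$ small enough (by continuity and the vanishing of $\fan{\cdot - \W}_{\X;\alpha}$ as $\delta \to 0$), giving genuine uniqueness, not merely uniqueness within the ball.
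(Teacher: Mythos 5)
Your overall strategy is the same as the paper's: apply the Banach fixed point theorem to $\mathcal{M}$ from~\meqref{eq:contram} on the ball $B_{\delta}(\W,R)$ of~\meqref{eq:wyzero}, using Lemma~\mref{lem:jstab1} for the self-map property and Lemma~\mref{lem:jstab2} for the contraction. The genuine gap is in your self-map step. You fix $R=1$ and argue that the bound of Lemma~\mref{lem:jstab1} drops below $R$ because $\fan{\X}_{\alpha}$ restricted to $[0,\delta]$ becomes small as $\delta\to 0$, while the term $4\fan{\W}_{\X;\alpha}$ is ``absorbed''. Neither point holds: the norms~\meqref{eq:normx} use the exponent $|\tau|\alpha$ that exactly matches the assumed regularity, so $\sup_{s\ne t\in[0,\delta]}|\X^{\tau}_{s,t}|/|t-s|^{|\tau|\alpha}$ is monotone in $\delta$ but in general does \emph{not} tend to $0$ (an exactly $\alpha$-H\"older path, e.g.\ of Weierstrass type, has its $\alpha$-H\"older seminorm bounded below on every interval $[0,\delta]$); the same applies to $\fan{\W}_{\X;\alpha}$, which moreover cannot simply be dropped — it enters the estimate of Lemma~\mref{lem:jstab1} with coefficient $4$, and the triangle inequality $\fan{\mathcal{M}\Y-\W}_{\X;\alpha}\le\fan{\mathcal{M}\Y}_{\X;\alpha}+\fan{\W}_{\X;\alpha}$ adds another copy. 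Hence with $R=1$ the $\delta$-independent part $C_{\alpha}\big(\sum_{i}\|F^i\|_{C_{b}^{3}}\big)\fan{\X}_{\alpha}+5\fan{\W}_{\X;\alpha}$ may already exceed $R$ for every $\delta>0$, and no choice of $\delta$ restores the inclusion $\mathcal{M}(B_{\delta}(\W,R))\subseteq B_{\delta}(\W,R)$.

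The repair is precisely what the paper does: choose $R$ \emph{after}, not before, these $\delta$-independent quantities, namely $R:=2\big(C_{\alpha}(\sum_{i}\|F^i\|_{C_{b}^{3}})\fan{\X}_{\alpha}+4C_{\alpha}(\sum_{i}\|F^i\|_{C_{b}^{3}})\fan{\W}_{\X;\alpha}+\fan{\W}_{\X;\alpha}\big)$, so that the only term to be controlled by shrinking $\delta$ is $C_{\alpha}(\sum_{i}\|F^i\|_{C_{b}^{3}})\,\delta^{\alpha}M(\cdots)\le R/2$; this is possible because on the ball the arguments of $M$ are bounded in terms of $R$ and $\fan{\W}_{\X;\alpha}$ (and $M$ is increasing), even though $M$ grows with $R$. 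Your contraction step is then fine and coincides with the paper's use of Lemma~\mref{lem:jstab2}. Finally, your closing remark that any competing solution automatically lies in $B_{\delta}(\W,R)$ relies on the same unproved vanishing of restricted norms, and on shrinking $\delta$ after the competitor is given, which is circular for a fixed interval; note that the paper's proof of Theorem~\mref{thm:leau} only yields uniqueness of the fixed point within the ball, with genuine uniqueness obtained separately in Theorem~\mref{thm:unilthm} by applying Lemma~\mref{lem:jstab2} directly on a small interval beyond the putative branching time.
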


\begin{proof}
We first show
\begin{equation}
\mathcal{M}(B_{\delta}(\W, R))\subseteq B_{\delta}(\W, R).
\mlabel{eq:mycont}
\end{equation}
Let $\Y\in B_{\delta}(\W, R)$. It follows from  ~\meqref{eq:contram} and \meqref{eq:wyzero} that
$\mathcal{M}\Y_{0}=\Y_0=\W_0 .$
In addition, by  ~(\ref{eq:jstab1eq}) we have $\fan{\W}_{\X;\alpha}=\fan{\W} _{\X;\alpha }$. Notice that $\mathcal{M}\Y-\W \in \cbrpxt$. Then
\begin{align*}
&\ \fan{\mathcal{M}\Y,\W}_{\X, \X; \alpha}\\
=&\ \fan{\mathcal{M}\Y-\W}_{\X; \alpha} \hspace{2cm} (\text{by~\meqref{eq:normy} and \meqref{eq:dist}})\\
\le&\ \fan{\mathcal{M}\Y}_{\X, \X; \alpha} +\fan{\W}_{\X; \alpha}\\
\le&\ \Bigg(C_{\alpha}\Big(\sum_{a=1}^{d}\|F^a\|_{C_{b}^{3}}\Big)\bigg(\delta^{\alpha}M\Big(\delta, \sum_{a\in A}|\Y_{0}^{\bullet_a}|, \sum_{a,b\in A}|\Y_{0}^{\bullet_a\bullet_b}|, \sum_{a,b\in A}| \Y_{0}^{\tddeuxa{$a$}{$b$}}\ \,|, \fan{\Y} _{\X;\alpha }, \fan{\X} _{\alpha } \Big)+\fan{\X}_{\alpha } \bigg)+4\fan{\W} _{\X;\alpha }\Bigg)\\
&\ +\fan{\W} _{\X;\alpha }. \hspace{9cm} (\text{by Lemma~\mref{lem:jstab1}})
\end{align*}
Hence
$$\fan{\mathcal{M}\Y, \W}_{\X;\alpha} \le\ C_{\alpha}\Big(\sum_{i=1}^{d}\|F^i\|_{C_{b}^{3}}\Big)\delta^{\alpha}M\Big(\delta, \sum_{a\in A}|\Y_{0}^{\bullet_a}|, \sum_{a,b\in A}|\Y_{0}^{\bullet_a\bullet_b}|, \sum_{a,b\in A}| \Y_{0}^{\tddeuxa{$a$}{$b$}}\ \,|, \fan{\Y} _{\X;\alpha }, \fan{\X} _{\alpha } \Big)+\dfrac{R}{2},$$
where
$$ R:=2\bigg(C_{\alpha}\Big(\sum_{i=1}^{d}\|F^i\|_{C_{b}^{3}}\Big)\fan{\X} _{\alpha }+4C_{\alpha}\Big(\sum_{i=1}^{d}\|F^i\|_{C_{b}^{3}}\Big)\fan{\W} _{\X;\alpha }+\fan{\W} _{\X;\alpha }\bigg).$$
Taking $\delta$ small enough such that	
$$C_{\alpha}\Big(\sum_{i=1}^{d}\|F^i\|_{C_{b}^{3}}\Big)\delta^{\alpha}M\Big(\delta, \sum_{a\in A}|\Y_{0}^{\bullet_a}|, \sum_{a,b\in A}|\Y_{0}^{\bullet_a\bullet_b}|, \sum_{a,b\in A}| \Y_{0}^{\tddeuxa{$a$}{$b$}}\ \,|, \fan{\Y} _{\X;\alpha }, \fan{\X} _{\alpha } \Big)\le \dfrac{R}{2},$$
we obtain $\mathcal{M}\Y\in B_{\delta}(\W, R)$.

Next, for $\Y, \tilde\Y\in B_{\delta}(\W, R)$, since $\Y_0^\tau -\tilde{\Y}^\tau_0=W_0^\tau -W_0^\tau =0$ and $\Y$, $\tilde\Y$ are controlled planarly branched rough paths by the same $\X$, it follows from Lemma~\mref{lem:jstab2} that
\begin{align*}
&\ \fan{\mathcal{M}\Y-\mathcal{M}\tilde\Y}_{\X;\alpha}\\
\le&\ C_{\alpha}\delta^{\alpha}\Big(\sum_{i=1}^{d}\|F^i\|_{C_{b}^{3}}\Big)M\Big(\delta, \sum_{a\in A}|\Y_{0}^{\bullet_a}|, \sum_{a,b\in A}|\Y_{0}^{\bullet_a\bullet_b}|, \sum_{a,b\in A}| \Y_{0}^{\tddeuxa{$a$}{$b$}}\ \,|, \fan{\Y} _{\X;\alpha }, \fan{\tilde\Y} _{\X;\alpha},\fan{\X} _{\alpha}\Big)\fan{\Y,\tilde{\Y} }_{\X,\tilde{\X};\alpha}.
\end{align*}
By  ~\meqref{eq:mycont}, the above function $M$ is bounded.
Choosing again $\delta$ small enough, we obtain
$$\fan {\mathcal{M}\Y, \mathcal{M}\tilde{\Y}}_{\X;\alpha} \le \frac{1}{2}\fan{\Y, \tilde{\Y} }_{\X;\alpha }.$$
Applying the Banach fixed point theorem, there is a unique  $\Y\in B_{\delta}(\W, R)$ such that $\mathcal{M}\Y=\Y$, as required.
\end{proof}

\noindent We can now state our main result in this paper.

\begin{theorem}(Planarly branched universal limit theorem)
With the setting in  ~\meqref{eq:XYF}, let $\alpha\in (\frac{1}{4},\frac{1}{3}]$ and $\X\in \brpt$. Then there is a unique $\Y\in \cbrpxt$ such that
\begin{equation}
\Y_{t}=\Y_{0}+\displaystyle\int_0^t{F(\Y_{r})\cdot dX_{r}},\quad \forall t\in [0, T],
\mlabel{eq:bY}
\end{equation}	
that is,  ~\meqref{eq:RDE} has a unique solution on $[0, T]$.
\mlabel{thm:unilthm}
\end{theorem}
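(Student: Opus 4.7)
The plan is to promote the local existence and uniqueness in Theorem~\ref{thm:leau} to the full interval $[0,T]$ by a standard concatenation argument, where the main task is to show that the local time-step $\delta$ produced by Theorem~\ref{thm:leau} is bounded below uniformly along the iteration.

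First, I would apply Theorem~\ref{thm:leau} on $[0,\delta_1]$ to obtain a unique $\Y^{(1)}\in \mathcal{D}^{3}_{\X;\alpha}([0,\delta_1])$ solving~\eqref{eq:bY} on $[0,\delta_1]$. The key observation is that the initial data required for the next step are controlled uniformly: by the very definition of $\mathcal{M}\Y$ (see~\eqref{eq:contram}) together with~\eqref{eq:inte2}, the components of $\Y^{(1)}_{\delta_1}$ satisfy
\[
\Y^{(1),\bullet_a}_{\delta_1}=F^a(Y_{\delta_1}),\quad \Y^{(1),\tddeuxa{$a$}{$b$}\ \,}_{\delta_1}=F^a(Y_{\delta_1})^{\bullet_b},\quad \Y^{(1),\bullet_a\bullet_b}_{\delta_1}=0,
\]
(up to the analogue of~\eqref{eq:zcbrp1}), so each is bounded in absolute value by a constant depending only on $\|F\|_{C^3_b}$ and not on the interval already treated. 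Using $\Y^{(1)}_{\delta_1}$ as initial datum I would re-invoke Theorem~\ref{thm:leau} on $[\delta_1,\delta_1+\delta_2]$ to produce $\Y^{(2)}$, glue it to $\Y^{(1)}$, and iterate.

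The heart of the argument — and the step I expect to be the main obstacle — is to show that the $\delta_k$ produced at each stage admit a uniform lower bound $\delta_\star>0$, so that the iteration reaches $T$ in finitely many steps. Inspecting the proof of Theorem~\ref{thm:leau}, the two conditions imposed on $\delta$ are of the form
\[
C_\alpha\Big(\sum_i\|F^i\|_{C^3_b}\Big)\delta^\alpha M\Big(\delta, \ldots, \fan{\Y}_{\X;\alpha}, \fan{\X}_\alpha\Big)\le \tfrac{R}{2}
\quad\text{and}\quad
C_\alpha\delta^\alpha (\cdots) M(\cdots)\le \tfrac12,
\]
where the relevant arguments of $M$ are the initial values $|\Y_0^\tau|$ (uniformly bounded by $\|F\|_{C^3_b}$, by the computation above), $\fan{\X}_\alpha$ (fixed, finite on $[0,T]$), and $\fan{\Y}_{\X;\alpha}$ restricted to the working ball $B_{\delta_k}(\W^{(k)},R_k)$. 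The latter norm is controlled by $R_k+\fan{\W^{(k)}}_{\X;\alpha}$, and $\W^{(k)}$ is the constant path equal to $\Y^{(k-1)}_{\delta_{k-1}+\cdots}$; since the components of $\W^{(k)}$ are uniformly bounded as above, an easy direct computation (using Definition~\ref{defn:cbrp1}) shows $\fan{\W^{(k)}}_{\X;\alpha}$ is bounded by a constant depending only on $\|F\|_{C^3_b}$ and $\fan{\X}_\alpha$. Hence all arguments of $M$ are controlled by universal quantities, which yields a lower bound $\delta_k\ge\delta_\star>0$.

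Once this uniform step size is established, I would partition $[0,T]$ into at most $\lceil T/\delta_\star\rceil$ subintervals, solve inductively on each, and concatenate the local solutions $\Y^{(1)},\Y^{(2)},\dots$ into a path $\Y:[0,T]\to (\mathcal{H}^{\leq 2}_{\mathrm{MKW}})^n$. To verify that $\Y\in\mathcal{D}^3_{\X;\alpha}([0,T])$, it suffices to check that $R\Y^\tau_{s,t}$ has the correct H\"older exponent across the gluing points; this reduces to the identity $R\Y^\tau_{s,t}=R\Y^\tau_{s,u}+\sum c(\sigma;\tau,\rho)(\Y^\sigma_{s,u})(\X^\rho_{u,t})+\cdots$ coming from Chen's relation $\X_{s,t}=\X_{s,u}\star\X_{u,t}$ (Definition~\ref{def:pbrp}\,(a)), combined with the H\"older estimates of $\X$ and of the $R\Y^\tau$ on each subinterval. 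Finally, uniqueness on $[0,T]$ follows by a standard coincidence argument: if $\tilde\Y$ is another solution, the set $\{t\in[0,T]:\Y|_{[0,t]}=\tilde\Y|_{[0,t]}\}$ is nonempty (contains $0$), closed by continuity, and open by the local uniqueness in Theorem~\ref{thm:leau} applied at the right endpoint, hence equals $[0,T]$.
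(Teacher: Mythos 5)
Your proposal is correct and follows essentially the same route as the paper: existence by iterating the local result of Theorem~\ref{thm:leau} with a time step uniform in the initial data and concatenating, and uniqueness by propagating coincidence of two solutions via the local contraction (your open/closed argument and the paper's $\sigma=\sup\{t:\Y_t=\tilde\Y_t\}$ contradiction are the same mechanism). If anything you are more careful than the paper on the one delicate point: the paper merely asserts that $\delta$ does not depend on the initial condition, whereas you justify the uniform lower bound by observing that the restart data $\Y^\tau$ are expressed through $F$ and hence bounded by $\|F\|_{C_b^3}$, and you also verify that the glued path remains an $\X$-controlled planarly branched rough path across the junction points.
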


\begin{proof}
{\bf (Existence).} Let $\delta$ be as in Theorem~\mref{thm:leau}.
Notice that $\delta$ does not depend on the initial condition $Y_0$.
By Theorem~\mref{thm:leau}, we get a solution on $[0, \delta]$. Taking $Y_\delta$ as a new initial condition, we obtain a solution to ~\meqref{eq:RDE} on $[\delta, 2\delta]$ by Theorem~\mref{thm:leau} again.
Continuing this process, we conclude a solution $Y$ to  ~\meqref{eq:RDE} on $[0, T]$ after finite steps.

{\bf (Uniqueness).} Let $\tilde \Y\in \cbrpxt$
be another required one. Define
$$\sigma:=\sup\Big\{t\ge 0 \mid \Y_t=\tilde\Y_t \text{ on }[0, T] \Big\}.$$
Since $\Y$ and $\tilde \Y$ are continuous, we have $\Y_{\sigma}=\tilde\Y_{\sigma}$. Suppose for a contradiction that $\sigma <T$. Take $\varepsilon>0$ to be small enough.
Then
\begin{equation}
\Y_{\sigma+\varepsilon}\ne \tilde\Y_{\sigma+\varepsilon}.
\mlabel{eq:twoy}
\end{equation}
Since  $\Y$ and $\tilde\Y$ are in $\cbrpxt$ controlled by the same $\X$, it follows from Lemma~\mref{lem:jstab2} that
\begin{align*}
&\fan {\Y|_{[\sigma, \sigma+\varepsilon]}, \tilde\Y|_{[\sigma, \sigma+\varepsilon]}}_{\X; \alpha}\\
\le&\  C_{\alpha}\varepsilon^{\alpha}\Big(\sum_{i=1}^{d}\|F^i\|_{C_{b}^{3}}\Big)M\bigg (\varepsilon, \sum_{a\in A}|\Y_{\sigma}^{\bullet_a}|, \sum_{a,b\in A}|\Y_{\sigma}^{\bullet_a\bullet_b}|, \sum_{a,b\in A}|\Y_{\sigma}^{\tddeuxa{$a$}{$b$}}\ \,|, \fan{\Y|_{[\sigma, \sigma+\varepsilon]}} _{\X;\alpha },\\
&\hspace{3.6cm}  \fan{\tilde \Y|_{[\sigma, \sigma+\varepsilon]} } _{\X;\alpha }, \fan{\X}_{\alpha}\bigg) \, \fan{\Y|_{[\sigma, \sigma+\varepsilon]}, \tilde\Y|_{[\sigma, \sigma+\varepsilon]}}_{\X; \alpha}\\
\le&\ \frac{1}{2}\fan {\Y|_{[\sigma, \sigma+\varepsilon]}, \tilde\Y|_{[\sigma, \sigma+\varepsilon]}}_{\X;\alpha},\hspace{3.3cm} (\text{by $\varepsilon$ small enough})
\end{align*}
which implies $\fan {\Y, \tilde\Y}_{\X;\alpha} = 0$ and $\Y=\tilde\Y$ on $[\sigma, \sigma+\varepsilon]$, contradicting ~\meqref{eq:twoy}. Hence $\Y=\tilde\Y$ on $[0, T]$.
\end{proof}

\vskip 0.2in

\noindent
{\bf Acknowledgments.} This work is supported by the Natural Science Foundation of Gansu Province (25JRRA644), Innovative Fundamental Research Group Project of Gansu Province (23JRRA684) and Longyuan Young Talents of Gansu Province. The first author is grateful to Laboratoire de Math\'ematiques Blaise Pascal at  Universit\'e Clermont Auvergne for warm hospitality.

\noindent
{\bf Declaration of interests. } The authors have no conflicts of interest to disclose.

\noindent
{\bf Data availability. } Data sharing is not applicable as no new data were created or analyzed.

\vspace{-.3cm}

\end{document}